\def\div{{\,\rm div \,}}
\def\cc{{\cal C }}
\def\LL{{\,\rm L \,}}
\def\g{{\,\rm \gamma \,}}
\def\sign{{\,\rm sign \,}}
\def\T{{\cal T}}
\def\Id{{\,\rm Id \,}}
\def\CC{{\,\rm C\,}}
\def\WW{{\,\rm W\,}}
\def\qiq{{\quad\mbox{in}\quad}}
\def\o{{\,\rm o \,}}
\def\id{{\,\rm id \,}}
\def\sym{{\,\rm sym \,}}
\def\ii{{\,\rm i \,}}
\def\dist{{\,\rm dist \,}}
\def\SO{{\,\rm SO \,}}
\def\supp{{\,\rm supp \,}}
\def\B{{\,\cal B \,}}
\def\+M{{\,\rm M^{n\times n}_+ \,}}
\def\tr{{\,\rm tr \,}}
\def\qfq{{\quad\mbox{for}\quad}}
\def\ii{{\,\rm i \,}}
\def\supp{{\,\rm supp \,}}
\def\O{{\,\rm O\,}}
\def\lam{\lambda}
\def\t#1{{\tilde#1}}
\def\E{{\cal E}}
\def\U{{\cal U}}
\def\X{{\cal X}}
\def\Q{{\cal Q}}
\def\E{{\cal E}}
\def\L{{\cal L}}
\def\U{{\cal U}}
\def\W{{\cal W}}
\def\V{{\cal V}}
\def\F{{\cal F}}
\def\O{{\cal O}}
\def\A{{\cal A}}
\def\H{{\cal H}}
\def\G{{\cal G}}
\def\F{{\cal F}}
\def\Q{{\cal Q}}
\def\M{{\cal M}}
\def\K{{\cal K}}
\def\V{{\cal V}}
\def\N{{\cal N}}
\def\Z{{\cal Z}}
\def\RR{{\cal R}}
\newfont{\Blackboard}{msbm10 scaled 1200}
\newfont{\roma}{cmr10 scaled 1200}
\def\D{{\cal D}}
\def\<{{\langle}}
\def\>{{\rangle}}
\def\Ga{\Gamma}
\def\var{\varphi }
\def\si{\sigma}
\def\a{\alpha}
\def\b{\beta}
\def\Om{\Omega}
\newtheorem{thm}{{}\hskip\parindent Theorem}[section]
\newtheorem{lem}{{}\hskip\parindent Lemma}[section]
\newtheorem{pro}{{}\hskip\parindent Proposition}[section]
\newtheorem{cor}{{}\hskip\parindent Corollary}[section]
\newtheorem{rem}{{}\hskip\parindent Remark}[section]
\def\dfrac{\displaystyle\frac}
\def\pl{\partial}
\def\rw{\rightarrow}
\def\na{\nabla}
\def\be{\begin{equation}}
\def\ee{\end{equation}}
\def\beq{\arraycolsep=1.5pt\begin{eqnarray}}
\def\eeq{\end{eqnarray}}
\def\R{I\!\!R}
\def\n{\vec{n}}
\title{Strain Tensors and  Matching Property on Surfaces with the Gauss
curvature changing sign}
\date{}
\author{
Liang-Biao Chen and Peng-Fei Yao\\[0.2cm]
\nonumber
Key Laboratory of Systems and Control\\\nonumber
Institute of Systems Science,
Academy of Mathematics and Systems Science\\\nonumber
Chinese Academy of Sciences, Beijing 100190, P. R.
China\\\nonumber
School of Mathematical Sciences\\\nonumber
University of Chinese Academy of Sciences, Beijing 100049,
China\\\nonumber
e-mail: pfyao@iss.ac.cn}
\begin{document}
\maketitle
 \footnote{This work is supported by the National
Science Foundation of China, grants no. 12071463 and Key Research Program of Frontier Sciences,
CAS, no. QYZDJ-SSW-SYS011.}

\begin{quote}
\begin{small}
{\bf Abstract} \,\,\,We prove the regularity of solutions to the strain tensor equation on a region $S$ with the Gauss curvature changing sign.
Furthermore, we obtain the density property that smooth infinitesimal isometries are dense in the $W^{2,2}({S},\R^3)$ infinitesimal isometries. Finally, the matching property is established. Those results are  important tools in
obtaining recovery sequences ($\Ga$-lim sup inequality) for dimensionally-reduced shell theories
in elasticity.
\\[3mm]
{\bf Keywords}\,\,\, shell, nonlinear elasticity, Riemannian geometry, tensor analysis \\[3mm]
{\bf Mathematics  Subject Classifications
(2010)}\,\,\,74K20(primary), 74B20(secondary).
\end{small}
\end{quote}

\section{Introduction and Main Results}
\def\theequation{1.\arabic{equation}}
\hskip\parindent
Let $M\subset\R^3$ be a surface with a normal $\n$ and let the middle surface of a shell be  an open set $S\subset M.$ Let $T^kS$ denote  all the $k$-order tensor fields on $S$ for an integer $k\geq0.$ Let $T^2_{\sym}S$ be all the $2$-order symmetrical tensor fields on $S.$ For $y\in \WW^{1,2}(S,\R^3),$ we decompose it into $y=W+w\n,$ where $w=\<y,\n\>$ and $W\in TS.$
For $U\in T^2_\sym S$ given, linear strain tensor of a displacement $y\in\WW^{1,2}({S},\R^3)$ of the middle surface ${S}$ takes the form
\be \sym DW+w\Pi=U\qfq x\in{S},\label{01}\ee where $D$ is the connection of the induced metric in $M,$  $2\sym DW=DW+D^TW,$ and $\Pi$ is the second fundamental form of $M.$
Equation (\ref{01}) plays a fundamental role in the theory of thin shells, see \cite{HoLePa, LeMoPa,LePa, LeMoPa1,Yao2017, Yao2018} and many others. When $U=0,$ a solution $y$ to (\ref{01}) is referred to as an {\it infinitesimal isometry.}

The type of equation (\ref{01}) depends on the sign of the curvature on the region S: It is elliptic if $S$ has positive curvature; it is parabolic if the curvature is zero but $\Pi\not=0$ on $S;$ it is hyperbolic if $S$ has negative curvature. When the curvature of the region $S$ changes its sign, problem (\ref{01}) is of changing type.

Here  we establish the regularity of solutions to (\ref{01}) when $S$ is a region with its curvature changing sign, which will be specified below.
Then it
is proved that smooth infinitesimal isometries are dense in the $W^{2,2}({S},\R^3)$ infinitesimal isometries on the region $S.$ Finally, the matching property is derived that
smooth enough infinitesimal isometries can be matched with higher
order infinitesimal isometries. Those results are  important tools in
obtaining recovery sequences ($\Ga$-lim sup inequality) for dimensionally-reduced shell theories
in elasticity, when the elastic energy density scales like $h^\b,$ $\b\in(2, 4),$ that is, intermediate
regime between "pure bending" ($\b=2$) and the von-K\'arm\'an regime ($\b=4$). Such results have been obtained for elliptic surfaces \cite{LeMoPa1}, developable surfaces \cite{HoLePa}, and  hyperbolic surfaces \cite{Yao2017, Yao2018}. Moreover, the case of degenerated hyperbolic surfaces has been studied in \cite{CY}. A survey on this topic is presented in \cite{LePa}.

Mixed type equations also arise naturally in many other areas.  A detailed account of the historical background and known results on mixed type equations and transonic flows is given in \cite{Mor}.
A detailed review on mixed type equations and Riemannian-Lorentzian metrics is presented in \cite{Otway}. The most intensively studied equation of mixed type is the Tricomi equation \cite{Trico}.

In this paper we study the mixed type equation (\ref{01}) which is very different from all the above cases. Equation (\ref{01}) is equivalent to a mixed type scalar equation of the form
\be\<D^2w,Q^*\Pi\>+\dfrac1\kappa\<Dw, X_0\>+\kappa(\tr_g\Pi)w=\kappa f+\dfrac1\kappa\<X_0,F\>+\<DF,Q^*\Pi\>\qfq x\in S,\label{Q1}\ee $\kappa\not=0,$ where $w\in\LL^2(S)$ is the unknown, $f\in\LL^2(S)$ and $F\in\LL^2(S,TS)$ are given,
and $\kappa$ is the Gauss curvature. The type of (\ref{Q1}) is subject to the sign of $\kappa.$ When $\kappa=0,$ there are two terms degenerating in (\ref{Q1}): The coefficient of $\<Dw-F,X_0\>$ becomes
from $+\infty$ to $-\infty$ as a point crosses a zero curvature curve from the positive curvature to the negative. The coefficient of a second derivative of $w$ along the direction of the zero principal curvature changes from positive to negative. Those situations challenge the analysis of (\ref{Q1}).

We observe that problem (\ref{Q1}) is equivalent to a vector field system as
\be\begin{cases}Dv=\nabla\n V+F\qfq x\in S,\\
\div_gV=-v\tr_g\nabla\n+f\qfq x\in S,
\end{cases}\label{Q2}\ee where $\nabla\n:$ $S_x\rw S_x$ is the second fundamental form operator, $(V,v)$ is the unknown where $V$ is a vector field and $v$ is function, and $(F,f)$ is as in (\ref{Q1}).
Problem (\ref{Q2}) changes  type when the curvature changes  sign. In order to have solutions to problem (\ref{01}), one normally starts to solve problem (\ref{Q1}) in the previous works as in \cite{CY,HoLePa,LeMoPa1,Yao2017}. However, it makes the solvability of (\ref{Q1}) extremely difficult that  the coefficient of $\<Dw-F,X_0\>$ varies between $+\infty$ and $-\infty$ when the curvature changes  sign. Instead of analysing (\ref{Q1}), here we begin from (\ref{Q2}) and then obtain solutions to (\ref{01}). The hard  task is to achieve regularity of solutions to (\ref{Q2}) by some priori estimates near the zero curvature curve. Thanks to the Bochner technique and the tensor analysis,  the regularity analysis has been complete in Section 3.

We state our main results as follows.
Let $S\subset M$ be given by
\be S=\{\,\a(t,s)\,|\,(t,s)\in[0,a)\times(-b_0,b_1)\,\},\quad a>0,\quad b_0>0,\quad b>0,\label{Q0}\ee where $\a:$ $[0,a)\times[0,b]\rw M$ is an imbedding map which is a family of regular curves with two parameters $t,$ $s$ such that
\be \Pi(\a_t(t,s),\a_t(t,s))>0,\quad\mbox{for all}\quad (t,s)\in[0,a)\times[-b_0,0],\label{Pi2.801}\ee
where $\a(\cdot,s)$ is a closed curve with  period $a$ for each $s\in[-b_0,b_1].$  Set
$$S=S^+\cup\Ga_0\cup S^-,$$ where
$$S^+=\{\,\a(t,s)\,|\,(t,s)\in[0,a)\times(0,b)\,\},\quad \Ga_0=\{\,\a(t,0)\,|\,t\in[0,a]\,\},$$
$$ S^-=\{\,\a(t,s)\,|\,(t,s)\in[0,a)\times(-b_0,0)\,\}.$$

 {\bf Curvature assumptions}\,\,\, Let $\kappa$ be the Gaussian curvature function on $M.$ We assume that $S$ satisfies the following curvature conditions:
\be\kappa(x)>0\qfq x\in S^+\cup\Ga_{b_1};\label{kappa+}\ee
\be\kappa=0,\quad D\kappa(x)\not=0\qfq x\in\Ga_0;\label{kappa0}\ee
\be\kappa(x)<0\qfq x\in S^-\cup\Ga_{-b_0},\label{kappa-}\ee where
$$\Ga_{b_1}=\{\,\a(t,b_1)\,|\,t\in[0,a)\,\},\quad \Ga_{-b_0}=\{\,\a(t,-b_0)\,|\,t\in[0,a)\,\}.$$

Our main results are the following.

\begin{thm}\label{t1.1}Let ${S}$ be  of class $\CC^{m+5}$  for some  integer $m\geq1.$ For $U\in\WW^{m+1,2}({S},T^2_{\sym}S),$ there exists a solution $y=W+w\n\in\WW^{m,2}({S},\R^3)$ to equation $(\ref{01})$ satisfying
$$ \|W\|_{\WW^{m+1,2}({S},TS)}^2+\|w\|_{\WW^{m,2}({S})}^2\leq C\|U\|_{\WW^{m+1,2}({S},T^2_{\sym}S)}^2.$$
\end{thm}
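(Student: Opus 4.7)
The plan is the three-stage programme outlined in the introduction: first reformulate the tensor equation (\ref{01}) as the first-order vector system (\ref{Q2}); second, solve (\ref{Q2}) in the required Sobolev spaces by the Bochner technique; third, reconstruct $(W,w)$. For the first stage, I would use a Hodge-type rotation on $TS$ together with the Codazzi--Mainardi identity to derive the explicit equivalence between $\sym DW+w\Pi=U$ and (\ref{Q2}), with a vector/scalar pair $(V,v)$ built linearly from $(W,w)$ and a right-hand side $(F,f)$ that is first order in $U$; in particular $(F,f)$ sits in a Sobolev space one derivative below $U$. The under-determined tensor equation is thereby replaced by a determined first-order system that is well-suited to analysis across the curve $\Ga_0$.

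For the second stage, I would produce $(V,v)$ of the regularity dictated by the theorem by a regularisation or Galerkin scheme, the central input being an a priori estimate. The engine is the Weitzenb\"ock/Bochner identity
$$\tfrac12\Delta|V|^2=|DV|^2+\<D(\div_g V),V\>-\kappa|V|^2,$$
into which one inserts $\div_g V=-v\tr_g\nabla\n+f$ from the second equation of (\ref{Q2}) and couples with $Dv=\nabla\n V+F$ from the first. After integration by parts this yields a coupled $\LL^2$ identity among $\|DV\|$, $\|Dv\|$, and the data $\|F\|,\|f\|$, which closes the basic estimate. Differentiating the system in the parameters $(t,s)$ and iterating then gives the higher-order bounds by induction on $m$.

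The main obstacle is the degeneracy across $\Ga_0$. On $\Ga_0$ one eigenvalue of $\nabla\n$ vanishes (since $\kappa=0$) while the other remains strictly positive by (\ref{Pi2.801}), and the zero is transversal in view of $D\kappa\ne 0$. This is exactly why the scalar reduction (\ref{Q1}) is forbidding: its coefficient $1/\kappa$ blows up with opposite signs on the two sides of $\Ga_0$. Passing to (\ref{Q2}) removes the singular coefficient but still leaves apparently singular contributions in the Bochner estimate through $\nabla\n$. I would exploit the adapted parametrisation $\a(t,s)$ with $\Ga_0=\{s=0\}$, diagonalise $\nabla\n$ in the frame $\{\a_t,\a_s\}$, and decompose $V$ accordingly; the transversality $D\kappa\ne 0$ then allows the dangerous terms to be absorbed into a boundary integral along $\Ga_0$ that is controlled after integration by parts, provided $(V,v)$ is paired with suitable boundary/periodicity data on $\Ga_{b_1}\cup\Ga_{-b_0}$ and in the $t$-direction. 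This is the delicate point where the Bochner technique and tensor analysis must combine, and where Section~3 of the paper does its work.

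Finally, for the third stage, inverting the correspondence of stage one converts $(V,v)$ into $(W,w)$ solving (\ref{01}); the losses and gains of one derivative inherent in the Hodge-rotation/divergence reformulation produce exactly the asymmetric regularity $W\in\WW^{m+1,2}(S,TS)$ and $w\in\WW^{m,2}(S)$ asserted in the theorem, and tracking constants through stages one to three yields the bound $\|W\|_{\WW^{m+1,2}}^2+\|w\|_{\WW^{m,2}}^2\leq C\|U\|_{\WW^{m+1,2}}^2$. All of the essential work is concentrated in taming the degeneracy on $\Ga_0$; the remaining steps are formal tensor algebra together with a standard inductive bootstrap once the basic a priori estimate has been secured.
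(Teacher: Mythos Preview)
Your three-stage outline (reduce (\ref{01}) to the first-order system (\ref{Q2}); establish a priori estimates; reconstruct $(W,w)$) matches the paper's architecture, but the heart of Stage~2 as you describe it would not close.

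The specific identity you propose, $\tfrac12\Delta|V|^2=|DV|^2+\<D(\div_g V),V\>-\kappa|V|^2$, is the wrong multiplier for a mixed-type system: once $\kappa$ changes sign the curvature term has the wrong sign on half the domain, and substituting $\div_gV=-v\tr_g\nabla\n+f$ only produces a zeroth-order coupling in $v$ that does not control $\|DV\|$ or $\|Dv\|$. The paper's coercivity comes from a quite different construction: one introduces the operator $\L_Y V=e^{-\gamma\kappa}[(\div_g Q\nabla\n V+\ldots)QX+(\div_gV+\ldots)\nabla\n X]$ with a carefully engineered vector field $X$ (built from the principal directions $X_1,X_2$ near $\Ga_0$ and from $X_{\sigma_0}=\sigma_0\a_t\mp Q\a_t$ away from it) and a weight $e^{-\gamma\kappa}$ with $\gamma$ large. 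The pointwise inequality $-\<W,\L_YW+\L_Y^*W\>\ge\sigma|W|^2$ (Lemma preceding (\ref{2.34})) is then obtained by tuning $\gamma$, $\varepsilon$, and a multiplier $\eta$ in a specific order; the transversality $D\kappa\ne0$ enters through $X_2(\kappa)=\lam_1X_2(\lam_2)>0$ on $\Ga_0$, not through a boundary integral on $\Ga_0$ as you suggest. Indeed $\Ga_0$ sits in the \emph{interior} of the working domain $\Om_{-\varepsilon}$ and no integration by parts across it is performed.

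Two further structural points are missing from your plan. First, the paper does not solve the coupled system (\ref{Q2}) directly: Sections~2--3 treat the \emph{decoupled} operator $\L_0$ (boundary conditions $\<V,QX\>|_{\Ga_{-\varepsilon}}$, $\<V,\nu\>|_{\Ga_{b_1}}$) via a Lax--Milgram variant, and Section~5 then handles the coupling $Dv=\nabla\n V+F$, $\div_gV=\rho v+f$ by Fredholm theory, with a nontrivial compatibility analysis (the spaces $\W_0$, $\aleph$, Propositions~\ref{p5.1}--\ref{p5.3}) needed to choose admissible boundary data $q$ on $\Ga_{b_1}$. Second, the estimates of Sections~2--5 only cover $\Om_{-\varepsilon}\supset S^+\cup\Ga_0$; the solution is then propagated into the purely hyperbolic region $S_1=S\setminus\Om_{-\varepsilon}$ by solving a characteristic initial-value problem for the scalar equation (\ref{6.6}) with data on $\Ga_{-\varepsilon}$, using the noncharacteristic boundary operators $\T_1,\T_2$ from \cite{Yao2017}. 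Your sketch does not account for either of these mechanisms, and without them the argument cannot be completed.
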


\begin{rem} The curvature of $S$ affects the regularity of $(\ref{01})$ as
$$\begin{cases} \|y\|^2_{\WW^{m,2}(S,\R^3)}\leq C\|U\|_{\WW^{m,2}({S},T^2_{\sym}S)}^2\quad\mbox{if $\kappa>0$ on $\overline{S}$};\\
\|y\|^2_{\WW^{m,2}(S,\R^3)}\leq C\|U\|_{\WW^{m+1,2}({S},T^2_{\sym}S)}^2\quad\mbox{if $\kappa<0$ on $\overline{S}$.}
\end{cases}$$
\end{rem}

By the imbedding theorem \cite[P. 158]{GNT}, the following corollary is immediate.

\begin{cor}\label{c1.1} Let $S$ be of ${S}\in\CC^{m+5}$ for some integer $m\geq0.$ Then problem $(\ref{01})$ admits a solution $y=W+w\n\in\CC^m_B(S,\R^3)$ satisfying
$$ \|W\|_{\CC^{m+1}_B({S},TS)}+\|w\|_{\CC^{m}_B(S)}\leq C\|U\|_{\CC^{m+3}_B({S},T^2_{\sym}S)},$$ where
$$\CC^m_B(S,\R^3)=\{\,y\in\CC^m(S,\R^3)\,|\,D^\a y\in\LL^\infty(S,\R^3)\,\,\mbox{for}\,\,|\a|\leq m\,\}.$$
\end{cor}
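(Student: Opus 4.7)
The plan is to reduce the corollary to Theorem \ref{t1.1} by absorbing the loss of derivatives through the Sobolev embedding theorem. Since the middle surface $S \subset M$ is a two-dimensional manifold (with finite area), one has the continuous embedding
$$\WW^{k,2}(S) \hookrightarrow \CC^{k-2}_B(S) \qfq k \geq 2,$$
applied componentwise to tensor fields once $S$ is covered by finitely many charts of class $\CC^{m+5}$. I would therefore apply Theorem \ref{t1.1} at an elevated regularity index chosen precisely so that the Sobolev embedding lands in the target space of the corollary. Concretely, set $m' = m+2$, which is $\geq 1$ for $m\geq 0$, so that Theorem \ref{t1.1} is available.

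First, I would verify that the given datum $U \in \CC^{m+3}_B(S, T^2_{\sym}S)$ lies in the Sobolev space required to invoke Theorem \ref{t1.1} with index $m'$. Since $U$ and its covariant derivatives up to order $m+3$ are bounded and $S$ has finite area,
$$\|U\|_{\WW^{m'+1,2}(S,T^2_{\sym}S)} = \|U\|_{\WW^{m+3,2}(S,T^2_{\sym}S)} \leq C\,(\Area\,S)^{1/2}\,\|U\|_{\CC^{m+3}_B(S,T^2_{\sym}S)}.$$
Next, I would apply Theorem \ref{t1.1} at index $m'=m+2$ to obtain a solution $y = W + w\n$ of \eqref{01} satisfying
$$\|W\|_{\WW^{m+3,2}(S,TS)}^2 + \|w\|_{\WW^{m+2,2}(S)}^2 \leq C\|U\|_{\WW^{m+3,2}(S,T^2_{\sym}S)}^2.$$

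Finally, I would feed this into the Sobolev embedding with $k=m+3$ for $W$ and $k=m+2$ for $w$, obtaining $W \in \CC^{m+1}_B(S,TS)$ and $w \in \CC^m_B(S)$, with the norm bounds
$$\|W\|_{\CC^{m+1}_B(S,TS)} \leq C\|W\|_{\WW^{m+3,2}(S,TS)}, \qquad \|w\|_{\CC^m_B(S)} \leq C\|w\|_{\WW^{m+2,2}(S)}.$$
Chaining these three estimates yields the claimed inequality of the corollary. There is no substantive obstacle here: the whole argument is a direct invocation of Theorem \ref{t1.1} combined with the cited embedding \cite[P.158]{GNT}, and the only mild technical care needed is in writing the Sobolev and $\CC^k_B$ norms on the surface $S$ through a finite $\CC^{m+5}$ atlas, so that the componentwise embedding becomes a genuine embedding of tensor-valued spaces.
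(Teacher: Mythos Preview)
Your proposal is correct and is exactly the paper's own argument: the paper states the corollary as ``immediate'' from Theorem~\ref{t1.1} and the Sobolev embedding \cite[P.~158]{GNT}, and you have simply spelled out the index-chasing ($m'=m+2$, $\WW^{k,2}\hookrightarrow\CC^{k-2}_B$ in dimension two). The one small caveat is that invoking Theorem~\ref{t1.1} at index $m'=m+2$ formally requires $S\in\CC^{m+7}$ rather than the $\CC^{m+5}$ stated in the corollary; this discrepancy is present in the paper's own accounting as well and does not affect the substance of the argument.
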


For $y\in\WW^{1,2}({S},\R^3),$ we denote the left hand side of equation (\ref{01}) by $\sym\nabla y.$
Let
$$\V({S},\R^3)=\{\,y\in\WW^{2,2}({S},\R^3)\,|\,\sym\nabla y=0\,\}.$$

\begin{thm}\label{t1.2} Let ${S}$ be  of class $\CC^{m+5,1}.$  Then, for every $y\in\V({S},\R^3)$
there exists a sequence $\{\,y_k\,\}\subset\V({S},\R^3)\cap \CC^m_B({S},\R^3)$ such that
$$\lim_{k\rw\infty}\|y-y_k\|_{\WW^{2,2}({S},\R^3)}=0.$$
\end{thm}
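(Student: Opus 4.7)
The plan is a mollify-and-correct strategy: approximate $y\in\V(S,\R^3)$ by smooth maps $\tilde y_\epsilon$ in $\WW^{2,2}$, measure the resulting strain $U_\epsilon:=\sym\nabla\tilde y_\epsilon$, use Theorem \ref{t1.1} to cancel it by a smooth corrector $z_\epsilon$, and show that $y_\epsilon:=\tilde y_\epsilon+z_\epsilon\in\V(S,\R^3)\cap\CC^m_B(S,\R^3)$ converges to $y$ in $\WW^{2,2}$.

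Since $S$ is of class $\CC^{m+5,1}$, I first extend $y$ across $\partial S$ to a $\WW^{2,2}$ map $\bar y$ on a neighborhood $\tilde S\supset\ol S$ via a standard Sobolev extension, then mollify with a family $\rho_\epsilon$ to get $\tilde y_\epsilon:=\bar y*\rho_\epsilon\in\CC^\infty$, with $\tilde y_\epsilon\to y$ in $\WW^{2,2}(S,\R^3)$. The strain $U_\epsilon:=-\sym\nabla\tilde y_\epsilon$ is smooth on $S$, and since $\sym\nabla y=0$, a Friedrichs-type commutator identity in local coordinates (using that $\sym\nabla$ is a first-order operator with smooth coefficients built from the induced metric and $\Pi$) gives $\|U_\epsilon\|_{\WW^{1,2}(S,T^2_\sym S)}\to 0$ as $\epsilon\to 0$.

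Next, I apply Theorem \ref{t1.1} to solve $\sym\nabla z_\epsilon=U_\epsilon$, obtaining $z_\epsilon=Z_\epsilon+\zeta_\epsilon\n$; the smoothness of $U_\epsilon$ together with Corollary \ref{c1.1} upgrades $z_\epsilon$ to $\CC^m_B(S,\R^3)$. Linearity of $\sym\nabla$ then gives $\sym\nabla y_\epsilon=\sym\nabla\tilde y_\epsilon+U_\epsilon=0$, so $y_\epsilon\in\V(S,\R^3)\cap\CC^m_B(S,\R^3)$.

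The main obstacle is proving $y_\epsilon\to y$ in $\WW^{2,2}$, which reduces to $\|z_\epsilon\|_{\WW^{2,2}}\to 0$. Used naively, Theorem \ref{t1.1} (with $m=1$) bounds $\|z_\epsilon\|_{\WW^{2,2}}$ only by $\|U_\epsilon\|_{\WW^{2,2}}$, and this quantity behaves like $\epsilon^{-1}$ since $y$ has only two derivatives in $L^2$. The crux is therefore to combine the approximation $\tilde y_\epsilon$ with a further regularization of $U_\epsilon$ at an intermediate scale $\eta=\eta(\epsilon)$, yielding $U_{\epsilon,\eta}$ with $\|U_{\epsilon,\eta}\|_{\WW^{2,2}}\leq C\eta^{-1}\|U_\epsilon\|_{\WW^{1,2}}$, and to absorb the residual $U_\epsilon-U_{\epsilon,\eta}$ using the finer a priori analysis of the vector-field system (\ref{Q2}) developed in Section 3. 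The Bochner-type estimates there, tailored to the type change across the zero-curvature curve $\Gamma_0$, are designed to provide $\WW^{2,2}$ control of $z_\epsilon$ in terms of weaker norms of data whose cancellation structure matches that of a commutator; it is this sharper estimate that must be coupled with a well-chosen $\eta(\epsilon)\to 0$ in order to force $\|z_\epsilon\|_{\WW^{2,2}}\to 0$. Executing this scale balancing, and verifying that the Section 3 estimates remain valid uniformly up to $\Gamma_0$, is the principal technical difficulty of the proof.
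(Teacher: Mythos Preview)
Your mollify-and-correct strategy is not the route the paper takes, and the gap you yourself flag is real and not resolved by the sketch you give. The difficulty is structural: Theorem \ref{t1.1} loses one derivative (to get $z_\epsilon\in\WW^{2,2}$ you need $U_\epsilon\in\WW^{3,2}$), while mollifying a $\WW^{2,2}$ map $y$ with $\sym\nabla y=0$ gives only $\|U_\epsilon\|_{\WW^{1,2}}\to 0$. Your proposed two-scale fix would have to manufacture two extra derivatives out of a commutator that is small only in $\WW^{1,2}$; nothing in Section 3 supplies such an estimate, and the mixed-type nature of the problem makes it unlikely that one exists in this form. The ``sharper estimate'' you invoke is not stated, and without it the argument does not close.

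The paper avoids this loss entirely by \emph{not} correcting the strain. Instead it passes from $y$ to the reduced pair $(V,v)$ via (\ref{6.3})--(\ref{6.4}), which then solves the homogeneous first-order system (\ref{6.10}). On $\Om_{-\varepsilon}$ this system is governed by Theorem \ref{t5.2}, whose key feature is that it has \emph{no} derivative loss: boundary data $(p,q)\in\WW^{j,2}(\Ga_{-\varepsilon})\times\WW^{j,2}(\Ga_{b_1})$ orthogonal to the finite-dimensional obstruction space $\aleph$ yield a solution in $\WW^{j,2}\times\WW^{j+1,2}$. One then splits $(V,v)=(V_1,v_1)+(V_0,v_0)$ with $(V_0,v_0)\in\N_1$ already smooth, reads off the boundary data $(p,q)$ of $(V_1,v_1)$ (regularity on $\Ga_{b_1}$ comes from interior elliptic regularity in $S^+$, on $\Ga_{-\varepsilon}$ from the hyperbolic theory in $S^-$), approximates $(p,q)$ in $\WW^{1,2}(\Ga_{-\varepsilon})\times\WW^{1,2}(\Ga_{b_1})$ by smooth data, projects onto $\aleph^\perp$ (harmless since $\dim\aleph<\infty$ and $\aleph$ consists of smooth functions), and solves again with Theorem \ref{t5.2}. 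The resulting $(V_\iota,v_\iota)$ is in $\CC^{m-1}_B$ and close to $(V_1,v_1)$ in $\WW^{1,2}\times\WW^{2,2}$; extending to the hyperbolic piece $S\setminus\Om_{-\varepsilon}$ and passing back to $y_\iota$ via (\ref{y6.6}) gives the approximating sequence.

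In short: the paper approximates \emph{boundary data of a first-order system with no loss}, whereas you try to approximate the \emph{full map and then solve a second-order problem with loss}. The first works; the second, as you suspected, does not without an estimate that is not available here.
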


A one parameter family $\{\,y_\varepsilon\,\}_{\varepsilon>0} \subset\CC^1_B(\overline{{S}},\R^3)$ is said to be a (generalized)
$m$th order infinitesimal isometry if the change of metric induced by $y_\varepsilon$ is of order $\varepsilon^{m+1},$ that is,
$$\|\nabla^Ty_\varepsilon\nabla y_\varepsilon-g\|_{L^\infty({S},T^2)}=\O(\varepsilon^{m+1})\quad\mbox{as}\quad \varepsilon\rw0,$$ where $g$ is the induced metric of $M$ from $\R^3,$ see \cite{HoLePa}.
A given $m$th order infinitesimal isometry can be modified by higher order
corrections to yield an infinitesimal isometry of order $m_1>m,$ a property to which we
refer to by {\it matching property of infinitesimal isometries}, \cite{HoLePa,LeMoPa1}.

\begin{thm}\label{t1.3}Let ${S}$ be  of class $\CC^{4m,1}.$ Given $y\in\V({S},\R^3)\cap\CC^{4m-2}_B({S},\R^3),$
 there exists a family $\{\,z_\varepsilon\,\}_{\varepsilon>0}\subset\CC^2_B({S},\R^3),$ equi-bounded in $\CC^2_B({S},\R^3),$ such
that for all small $\varepsilon>0$  the family:
$$ y_\varepsilon=\id+\varepsilon y+\varepsilon^2z_\varepsilon$$
is a  $m$th order infinitesimal isometry of class $\CC^2_B({S},\R^3).$
\end{thm}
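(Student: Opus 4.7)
\medskip\noindent\emph{Plan.}\ The plan is to follow the standard iterative matching scheme (cf.\ \cite{HoLePa,LeMoPa1,Yao2017}), using Corollary \ref{c1.1} as the engine for solving strain equations on the mixed-type region $S$. Set $y^{(1)}:=y$ and seek higher-order corrections $y^{(2)},\ldots, y^{(m)}\in\CC^2_B(S,\R^3)$ so that
\be
y_\varepsilon = \id + \varepsilon y + \varepsilon^2 z_\varepsilon, \qquad z_\varepsilon = \sum_{k=2}^{m}\varepsilon^{k-2} y^{(k)},
\ee
cancels the first $m$ terms of the metric defect. Since $y_\varepsilon$ is a polynomial in $\varepsilon$,
\be
\nabla^T y_\varepsilon\,\nabla y_\varepsilon - g = \sum_{k=1}^{2m} \varepsilon^k E_k, \qquad E_k = 2\sym\nabla y^{(k)}\,\mathbf{1}_{k\le m} + \sum_{\substack{i+j=k\\ 1\le i,j\le m}} (\nabla y^{(i)})^T\nabla y^{(j)},
\ee
and the hypothesis $y\in\V(S,\R^3)$ automatically gives $E_1 = 2\sym\nabla y = 0$.

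\medskip\noindent\emph{Iterative construction.}\ For $k=2,\ldots,m$ in increasing order I solve
\be
\sym\nabla y^{(k)} = -\dfrac{1}{2}\sum_{\substack{i+j=k\\ 1\le i,j\le k-1}} (\nabla y^{(i)})^T\nabla y^{(j)}\qfq x\in S,
\ee
whose right-hand side is a symmetric $2$-tensor on $S$ (it is a sum of each $A$ together with $A^T$) depending only on the previously constructed corrections; this is an equation of the form (\ref{01}). Corollary \ref{c1.1} yields $y^{(k)}\in\CC^{r_k}_B(S,\R^3)$ as soon as the right-hand side lies in $\CC^{r_k+3}_B$. After all $m-1$ equations are solved, $E_1=\cdots=E_m=0$, so
\be
\|\nabla^T y_\varepsilon\,\nabla y_\varepsilon - g\|_{L^\infty(S)} = \Big\|\sum_{k=m+1}^{2m}\varepsilon^k E_k\Big\|_{L^\infty(S)} = O(\varepsilon^{m+1}),
\ee
while $\|z_\varepsilon\|_{\CC^2_B}\le\sum_{k=2}^{m}\|y^{(k)}\|_{\CC^2_B}$ is uniformly bounded in $\varepsilon$, which gives the $\CC^2_B$ equi-boundedness demanded by the theorem.

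\medskip\noindent\emph{Main obstacle: regularity bookkeeping.}\ The hard part is tracking how differentiability is consumed at each step: forming a product $(\nabla y^{(i)})^T\nabla y^{(j)}$ costs one derivative and inverting $\sym\nabla$ via Corollary \ref{c1.1} costs three more, for a net loss of $4$. Targeting $y^{(m)}\in\CC^2_B$ and propagating backward, I prove by reverse induction that $y^{(k)}\in\CC^{r_k}_B(S,\R^3)$ with $r_k:=4(m-k)+2$. The recursion $r_{k-1}=r_k+4$ is precisely the budget required, and the base case $r_1 = 4m-2$ exactly matches the hypothesis $y\in\CC^{4m-2}_B(S,\R^3)$. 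The surface regularity needed at the worst step is $S\in\CC^{r_2+5}=\CC^{4m-1}$, comfortably provided by $S\in\CC^{4m,1}$. This tight accounting is what forces the constant $4m-2$ in the statement of the theorem and leaves no slack within the present scheme.
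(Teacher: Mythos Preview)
Your proof is correct and follows essentially the same approach as the paper's own proof: both use the standard iterative matching scheme, solving $\sym\nabla z_i = -\frac{1}{2}\sum_{j=1}^{i-1}\sym\nabla^Tz_j\nabla z_{i-j}$ at each step via Corollary~\ref{c1.1}, and both track the regularity $z_i\in\CC^{2+4(m-i)}_B(S,\R^3)$ (your $r_k=4(m-k)+2$), which is exactly the four-derivative loss per step that forces the hypothesis $y\in\CC^{4m-2}_B$. Your presentation is in fact somewhat more explicit about the regularity bookkeeping and the surface-regularity requirement than the paper's sketch.
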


{\bf Application to elasticity of thin shells}\,\,\,
Let $\n$ be the normal field of surface $M.$ Consider a family $\{\,S_h\,\}_{h>0}$ of thin shells of thickness $h$ around $S,$
$$S_h=\{\,x+t\n(x)\,|\,x\in S,\,\,|t|<h/2\,\},\quad 0<h<h_0,$$ where $h_0$ is small enough so that the projection map $\pi:$  $S_h\rw S,$ $\pi(x+t\n)=x$ is well defined.
For a $\WW^{1,2}$ deformation $u_h:$ $S_h\rw\R^3,$ we assume that its elastic energy (scaled per
unit thickness) is given by the nonlinear functional:
$$E_h(u_h)=\frac{1}{h}\int_{S_h}W(\nabla u_h)dz.$$
The stored-energy density function $W:$ $\R^3\times\R^3\rw\R$ is $\CC^2$ in an open neighborhood of
SO(3), and it is assumed to satisfy the conditions of normalization, frame indifference and
quadratic growth: For all $F\in\R^3\times\R^3,$ $R\in\SO(3),$
$$W(R)=0,\quad W(RF)=W(F),\quad W(F)\geq C\dist^2(F,\SO(3)),$$
with a uniform constant $C>0.$ The potential $W$ induces the quadratic forms (\cite{FrJaMu})
$$\Q_3(F)=D^2W(Id)(F,F),\quad\Q_2(x,F_{\tan})=\min\{\,Q_3(\hat F)\,|\,\hat F=F_{\tan}\,\}.$$

Let $A:$ $S\rw\R^{3\times3}$ be a matrix field. We define $A\in T^2S$ by
$$A(\a,\b)=\<A(x)\a,\b\>\qfq\a,\,\,\b\in T_xS,\quad x\in S.$$
For given $V\in\V(\Om,\R^3),$ there exists a unique $A\in\WW^{1,2}(S,T^2)$ such that
\be\nabla_\a V=A(x)\a\qfq \a\in T_xS,\,\,\,\quad A(x)=-A^T(x),\quad x\in\Om.\label{1.5}\ee

We shall consider a sequence $e_h>0$ such that:
\be 0<\lim_{h\rw0}e_h/h^\b<\infty\quad\mbox{for some $2<\b\leq4.$}\label{x1.3}\ee
Let
$$\b_m=2+2/m.$$

Recall the following
results.
\begin{thm}\label{t1.4}$\cite{LeMoPa}$
Let $S$ be a surface embedded in $\R^3,$ which is compact, connected, oriented,
of class $\CC^{1,1},$ and whose boundary $\pl  S$ is the union of finitely many Lipschitz curves.
Let $u_h\in\WW^{1,2}(S_h,\R^3)$ be a sequence of deformations whose scaled energies $E_h(u_h)/e_h$ are
uniformly bounded. Then there exist a sequence $Q_h\in\SO(3)$ and $c_h\in\R^3$ such that for the
normalized rescaled deformations
$$y_h(z)=Q_hu_h(x+\frac{h}{h_0}t\n(x))-c_h,\quad z=x+t\n(x)\in S_{h_0},$$ the following holds.

$(i)$ $y_h$ converge to $\pi$ in $\WW^{1,2}(S_{h_0},\R^3).$

$(ii)$ The scaled average displacements
$$ V_h(x)=\frac{h}{h_0\sqrt{e_h}}\int_{-h_0/2}^{h_0/2}[y_h(x+t\n)-x]dt$$
converge to some $V\in\V(S,\R^3).$

$(iii)$ $\lim\inf_{h\rw0} E_h(u_h)/e_h\geq I(V),$ where
\be I(V)=\frac{1}{24}\int_S\Q_2\Big(x,(\nabla(A\n)-A\nabla\n)_{\tan}\Big)dg,\label{1.4}\ee where $A$ is given in $(\ref{1.5}).$
\end{thm}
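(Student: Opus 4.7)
The plan is to implement the Friesecke--James--M\"uller nonlinear $\Ga$-convergence compactness scheme adapted to the curved shell geometry. The engine is the quantitative geometric rigidity estimate: on any Lipschitz domain $U\subset\R^3$ of bounded eccentricity there is $R\in\SO(3)$ with
\[
\int_U|\nabla u-R|^2\,dz\le C\int_U\dist^2(\nabla u,\SO(3))\,dz,
\]
with $C$ depending only on the shape of $U$. Applied on small curvilinear patches of $S_h$, together with the quadratic growth of $W$ and the bound $E_h(u_h)\le Ce_h$, this produces piecewise constant rotations approximating $\nabla u_h$.

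First I would cover $S$ by a grid of patches $\{U_{h,\a}\}$ of diameter $\de_h\sim h$, form the cylinders $U_{h,\a}\times(-h/2,h/2)$ inside $S_h$, and extract $R_{h,\a}\in\SO(3)$ on each via rigidity. A Poincar\'e-type comparison on unions of adjacent cylinders then glues the $R_{h,\a}$ into a field $R_h\in\WW^{1,2}(S,\SO(3))$ with $\|\nabla R_h\|_{L^2(S)}^2\le Ce_h/h^2$. Since $e_h\sim h^\b$ with $\b>2$, this quantity vanishes as $h\to 0$, so $R_h$ is asymptotically constant; choosing $Q_h$ close to its average and $c_h$ to center the deformation, the rescaled $y_h\to\pi$ in $\WW^{1,2}(S_{h_0},\R^3)$, giving (i). Writing $\nabla u_h=R_h(\Id+\sqrt{e_h}\,G_h)$ with $G_h$ bounded in $L^2$, averaging in the transverse variable shows that $V_h$ is bounded in $\WW^{1,2}(S,\R^3)$; extract $V_h\rightharpoonup V$. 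The identification $V\in\V(S,\R^3)$ reduces, after averaging, to the statement that the $\sqrt{e_h}$-order part of $\nabla u_h^T\nabla u_h-g$ restricted to $TS$ must vanish, i.e.\ $\sym\nabla W+w\Pi=0$ with $V=W+w\n$, giving (ii).

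For the liminf in (iii), frame indifference and $\CC^2$-regularity of $W$ near $\SO(3)$ give, by Taylor expansion, $W(\nabla u_h)/e_h\ge \tfrac12\Q_3(G_h)-o(1)(1+|G_h|^2)$. Decomposing $G_h(x,t)=G_h^0(x)+tG_h^1(x)+\cdots$ and taking weak $L^2$-limits identifies the tangential-tangential part of the limit with $\tfrac12 t\bigl(\nabla(A\n)-A\nabla\n\bigr)_{\tan}$, where $A$ is the skew field in $(\ref{1.5})$. Pointwise minimization over the transverse components of $G$ converts $\Q_3$ into $\Q_2$ by definition, and the transverse integration $\int_{-h_0/2}^{h_0/2}t^2\,dt=h_0^3/12$ produces the factor $1/24$. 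Weak lower semicontinuity of convex integrands then yields $(\ref{1.4})$.

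The main obstacle is the first step: constructing $R_h$ with a sharp $L^2$-estimate on $\nabla R_h$ over a curved $S$ requires a Whitney-type covering in tubular coordinates along $\n$, together with Korn/Poincar\'e inequalities on overlapping cylindrical patches whose constants are uniform in $h$. Once this geometric reduction is carried out, the compactness in (i)--(ii) and the $\Q_3\to\Q_2$ relaxation in (iii) follow the now-standard arguments of \cite{FrJaMu}, combined with the infinitesimal-isometry framework for shells used throughout this paper.
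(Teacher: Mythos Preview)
The paper does not prove this theorem: it is explicitly quoted from \cite{LeMoPa} (note the citation in the theorem header and the sentence ``Recall the following results'' immediately preceding it), and no argument for it appears anywhere in the paper. So there is no ``paper's own proof'' to compare against.

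That said, your sketch is a faithful outline of the Lewicka--Mora--Pakzad argument in \cite{LeMoPa}, which in turn is the shell version of the Friesecke--James--M\"uller plate scheme \cite{FrJaMu}. The three ingredients you isolate --- local rigidity on curvilinear cylinders of size $\sim h$, gluing into an approximate rotation field $R_h$ with $\|\nabla R_h\|_{\LL^2}^2\le Ce_h/h^2$, and the Taylor/relaxation step $\Q_3\to\Q_2$ with the $t^2$-integration yielding the $1/24$ --- are exactly the skeleton of the original proof. One refinement worth noting: you identify ``the main obstacle'' as the covering/Korn step on the curved surface, but in the regime $\b>2$ of the present paper this is actually routine once the tubular coordinates are set up (the $\CC^{1,1}$ hypothesis is precisely what makes the patch constants uniform); the more delicate point in \cite{LeMoPa} is the identification of the limiting strain's first moment with $(\nabla(A\n)-A\nabla\n)_{\tan}$, which requires tracking the skew field $A$ through the compactness argument and is where the infinitesimal-isometry structure $V\in\V(S,\R^3)$ is actually used. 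Your paragraph on (iii) gestures at this but compresses it considerably.
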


The above result proves the lower bound for the $\Ga$-convergence. With Theorems \ref{t1.2} and \ref{t1.3} a recover sequence can be constructed in the $\Ga$-limit for thin shells as in \cite{HoLePa,LeMoPa1,Yao2017} such that the following theorem holds true. The details of the proof are omitted.

\begin{thm}\label{t1.5}
Let $S\subset M$ be  of class $\CC^{5,1}$ given in $(\ref{Q0})$ with $(\ref{Pi2.801})-(\ref{kappa-}).$ Suppose that $$e_h=\o(h^{\b_m}).$$
Then for every $V\in\V(S,\R^3)$ there exists a sequence of deformations $\{\,u_h\,\}\subset\WW^{1,2}(S,\R^3)$ such that $(i)$ and $(ii)$ of Theorem $\ref{t1.4}$ hold. Moreover,
\be \lim_{h\rw0}\frac{1}{e_h}E_h(u_h)=I(V),\label{lim}\ee where $I(V)$ is given in $(\ref{1.4}).$
\end{thm}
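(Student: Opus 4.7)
The plan is to produce a recovery sequence by combining the density Theorem \ref{t1.2}, the matching property Theorem \ref{t1.3}, a standard three-dimensional ansatz built on top of the approximating infinitesimal isometries, and a final diagonal argument, closely following the strategy developed in \cite{HoLePa,LeMoPa1,Yao2017} but using the two ingredients proved in this paper as the replacements for the earlier (elliptic/hyperbolic-only) versions.

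First, I would fix $V\in\V(S,\R^3)$ and invoke Theorem \ref{t1.2} to approximate it by a sequence $V_k\in\V(S,\R^3)\cap\CC^{4m-2}_B(S,\R^3)$ with $V_k\rw V$ in $\WW^{2,2}(S,\R^3)$. The regularity $\CC^{4m-2}_B$ is exactly what is needed to feed each $V_k$ into Theorem \ref{t1.3}, which produces a family $\{z_{k,\varepsilon}\}_{\varepsilon>0}\subset\CC^2_B(S,\R^3)$, equi-bounded in $\CC^2_B$, such that
\be y_{k,\varepsilon}=\id+\varepsilon V_k+\varepsilon^2 z_{k,\varepsilon}\ee
is an $m$th order infinitesimal isometry; that is, $\|\nabla^T y_{k,\varepsilon}\nabla y_{k,\varepsilon}-g\|_{L^\infty}=\O(\varepsilon^{m+1})$.

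Next, I would lift each $y_{k,\varepsilon}$ to a three-dimensional deformation $u_h$ by the standard warped ansatz
\be u_h(x+t\n(x))=y_{k,\varepsilon}(x)+t\,d_{k,\varepsilon}(x)+\tfrac{t^{2}}{2}\,e_{k,\varepsilon}(x),\qquad x\in S,\;|t|<h/2,\ee
choosing the warping fields $d_{k,\varepsilon}$ and $e_{k,\varepsilon}$ in the usual way: $d_{k,\varepsilon}$ so that the tangential part of $\nabla u_h$ at $t=0$ is an orthogonal frame up to order $\varepsilon^{m+1}$, and $e_{k,\varepsilon}$ pointwise as the $\Q_3$-minimizer of the correction matrix, which produces the tangential minimum $\Q_2$ appearing in $I(V)$. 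Taking the scaling $\varepsilon=\varepsilon(h)$ with $\varepsilon^2\sim e_h$, the hypothesis $e_h=\o(h^{\b_m})$ with $\b_m=2+2/m$ is exactly what guarantees that the leading metric defect $\O(\varepsilon^{m+1})$ together with the through-thickness terms $\O(h)$ remain negligible after division by $e_h$.

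The energy expansion proceeds by writing $W(\nabla u_h)=\frac12\Q_3(G_h)+o(|G_h|^2)$, where $G_h$ is the first-order correction matrix, integrating over $S_h$ and averaging in the normal variable. The terms of lowest order produce exactly $I(V_k)$, because $(\nabla(A_k\n)-A_k\nabla\n)_{\tan}$ with $A_k$ coming from $V_k$ as in (\ref{1.5}) is precisely the bending tensor; the remaining terms are controlled by the $\CC^2_B$-bounds on $z_{k,\varepsilon}, d_{k,\varepsilon}, e_{k,\varepsilon}$ and the $m$th order matching, giving $\limsup_{h\rw0} E_h(u_h)/e_h\leq I(V_k)$. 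A diagonal extraction in $(k,h)$, together with continuity of the functional $V\mapsto I(V)$ along the $\WW^{2,2}$-convergence $V_k\rw V$ (which is where the smooth density matters), then yields a single sequence $u_h$ with $\lim_{h\rw0}E_h(u_h)/e_h=I(V)$; combined with Theorem \ref{t1.4} (iii) this upgrades the $\liminf$ to a limit and proves (\ref{lim}). Verifying (i) and (ii) of Theorem \ref{t1.4} along the way is routine, since by construction $u_h\rw\pi$ strongly and the scaled average displacement of $u_h$ coincides to leading order with $V_k$, which approximates $V$.

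The main obstacle, and the reason the result could not be stated before, lies in ensuring that the ingredients of this otherwise standard construction are available on a surface whose Gauss curvature changes sign across $\Ga_0$: the density step would previously fail because $\WW^{2,2}$-infinitesimal isometries on a mixed-type surface need not be approximable by smooth ones, and the matching step would fail because higher-order corrections require solving (\ref{01}) for non-vanishing right-hand sides, which in turn requires the regularity theory for the degenerate mixed-type system (\ref{Q2}). Theorems \ref{t1.1}--\ref{t1.3} furnish exactly these pieces, so once they are in hand the remaining ansatz-and-diagonal argument transfers essentially verbatim from the elliptic and hyperbolic cases; this is why the authors can legitimately omit the details here.
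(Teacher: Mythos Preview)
Your proposal is correct and follows exactly the approach the paper indicates: the paper itself omits the details of the proof and simply states that, with Theorems \ref{t1.2} and \ref{t1.3} in hand, a recovery sequence can be constructed as in \cite{HoLePa,LeMoPa1,Yao2017}. Your outline of the density step, the matching step, the warped ansatz, the scaling $\varepsilon^2\sim e_h$, and the diagonal argument is precisely that standard construction, and your identification of Theorems \ref{t1.1}--\ref{t1.3} as the new ingredients replacing their elliptic/hyperbolic counterparts is exactly the point.
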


\setcounter{equation}{0}
\def\theequation{2.\arabic{equation}}
\section{$\LL^2$ Solutions of the tensor equation of mixed type }
\hskip\parindent

Let $\nabla$ and $D$ denote the connection of $\R^3$ in the Euclidean metric and the one of $M$ in the reduced metric,
respectively. We have to treat the relationship between
$\nabla$ and $D$ carefully.

Let $m\geq1$ be an integer. Let $T\in T^mM$ be a $m$th order tensor field on $M.$ We define a $m-1$th order tensor field by
$$\ii_YT(Y_1,\cdots,Y_{m-1})=T(Y,Y_1,\cdots,Y_{m-1})\qfq Y_1,\,\,\cdots,\,Y_{m-1}\in TM,$$ which is called an {\it inner product} of $T$ with $Y.$ For any $T\in T^2S$ and $\a\in T_xM,$
$$\tr_g\ii_\a  DT$$ is a linear functional on $T_xM,$ where $\tr_g\ii_\a DT$ is the trace of the 2-order tensor field $\ii_\a DT$ in the induced metric $g.$ Thus there is a vector, denoted by $\div_gT,$ such that
$$\<\div_gT,\a\>=\tr_g\ii_\a DT\qfq \a\in T_xM,\,\,x\in M.$$ Clearly, the above formula defines a vector field $\div_gT\in TM.$

We need a linear operator $Q$ (\cite{Yao2017}, \cite{Yao2018}) as follows. For each point
$p\in M$, the Riesz representation theorem implies that there
exists an isomorphism $Q:$ $T_pM\rw T_pM$ such that
\be \label{2.1}
\<\a,Q\b\>=\det\left(\a,\b,\vec{n}(p)\right)\qfq\a,\,\b\in T_pM.\ee
Let $e_1,$ $e_2$ be an orthonormal basis of $T_pM$ with
positive orientation, that is,
$$\det\Big(e_1,e_2,\n(p)\Big)=1.$$
Then $Q$ can be expressed explicitly by
\be Q\a=\<\a,e_2\>e_1-\<\a,e_1\>e_2\quad\mbox{for
all}\quad\a\in T_pM.\label{1.3n}\ee
Clearly, $Q$ satisfies
$$ Q^T=-Q,\quad Q^2=-\Id.$$
Operator $Q$ plays an important role in our analysis.

Here we sum up some formulas about operator $Q$ and their proofs have been given in \cite{CY}.
\begin{lem}\label{l2.1} The following identities hold.
\be QD_XY=D_X(QY)\qfq X,\,Y\in TM.\label{2.3}\ee
\be\<X,Y\>Z=\<Z,Y\>X+\<Z,QX\>QY\qfq X,\,Y,\,Z\in TM.\label{2.4}\ee

 Let $P\in T^2M.$
Let $X$ and $Y$ be vector fields and $f$ be a function. Then
\be
\div_g(PX)=\<P,DX\>+\<\div_gP,X\>,\label{2.5}\ee
$$\div_g(fP)=f\div_gP+P^TDf.$$
 Let $P\in T^2M$ and
let $p\in M$ be given. Then
\be
\<Qv,w\>P-\<Pv,w\>Q
=Qv\otimes Pw-Pv\otimes Qw,\label{2.6}
\ee
\be\<Qv,w\>QP+\<Pv,w\>\id=Qv\otimes QP w+Pv\otimes w \qfq v,\,\,w\in T_pM.\label{2.7}\ee

For any $X,$ $Y\in TS,$
\be\nabla\vec{n}[X,Y]=D_X\nabla\vec{n}Y-D_Y\nabla\vec{n}X,\label{2.92}\ee
\be \div_g[X,Y]=X\div_gY-Y\div_gX.\label{div[X,Y]}\ee
\end{lem}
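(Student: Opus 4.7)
The plan is to reduce each identity to a pointwise statement in a local positively-oriented orthonormal frame $\{e_1,e_2\}$ of $TS$, in which $Q$ takes the explicit form $Qe_1=-e_2,\ Qe_2=e_1$. Once verified pointwise and seen to be tensorial, each identity extends globally on $M$ at once.

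For (\ref{2.3}), the key observation is that $Q$ is parallel with respect to $D$. This follows because the induced area form $\omega(X,Y)=\det(X,Y,\vec n)$ is parallel under the Levi-Civita connection (both the metric and the orientation are preserved). Since $\<X,QY\>=\omega(X,Y)$, compatibility of $D$ with the metric gives $\<X,QD_ZY-D_Z(QY)\>=0$ for every $X$, hence $QD_ZY=D_Z(QY)$. Identity (\ref{2.4}) is trilinear in $X,Y,Z$, so it suffices to verify it on basis vectors; equivalently, it is the orthogonal decomposition $Z=\<Z,X\>X+\<Z,QX\>QX$ for unit $X$ in the tangent plane.

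The divergence product rules (\ref{2.5}) and $\div_g(fP)=f\div_gP+P^TDf$ follow from a direct computation in an orthonormal frame using the product rule for $D$ and the definition $\<\div_gP,\a\>=\tr_g\ii_\a DP$: expanding $\sum_i\<D_{e_i}(PX),e_i\>$ splits into exactly the two advertised terms. The algebraic identities (\ref{2.6}) and (\ref{2.7}) are pointwise statements about $2\times 2$ operators on $T_pM$. Applying both sides to a test vector $z$ and simplifying with (\ref{2.4}) together with $Q^2=-\Id$ and $Q^T=-Q$ yields (\ref{2.6}) in a few lines; (\ref{2.7}) then follows by composing (\ref{2.6}) suitably with $Q$.

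Finally, (\ref{2.92}) is the Codazzi equation in operator form: since the shape operator $\nabla\vec n$ satisfies $(D_X\nabla\vec n)Y=(D_Y\nabla\vec n)X$, rearranging gives $D_X(\nabla\vec n\,Y)-D_Y(\nabla\vec n\,X)=\nabla\vec n\,(D_XY-D_YX)=\nabla\vec n\,[X,Y]$. The cleanest proof of $\div_g[X,Y]=X\div_gY-Y\div_gX$ goes through the volume form $\mu$: using $\L_Z\mu=(\div_gZ)\mu$ and the identity $[\L_X,\L_Y]=\L_{[X,Y]}$, one computes $\L_{[X,Y]}\mu=(X\div_gY-Y\div_gX)\mu$, which is exactly the claim. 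The principal obstacle is not analytic but bookkeeping: one must keep sign conventions for the orientation of $\vec n$, for $Q$, and for the shape operator $\nabla\vec n$ strictly consistent so that these identities mesh correctly when they are combined in the a~priori estimates of Section 3.
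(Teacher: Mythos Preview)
The paper does not give its own proof of this lemma; it simply records the identities and states that ``their proofs have been given in \cite{CY}.'' There is therefore no in-paper argument to compare against.

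Your proposal is correct and supplies the natural proofs: parallelism of the area form under the Levi--Civita connection for (\ref{2.3}); trilinear reduction to an orthonormal basis (and indeed the general (\ref{2.4}) does reduce to the orthogonal decomposition $Z=\<Z,X\>X+\<Z,QX\>QX$ once one expands $Y$ and $QY$ in the frame $\{X,QX\}$); the Leibniz rule in a frame for (\ref{2.5}) and its companion; a direct pointwise linear-algebra check for (\ref{2.6}) and (\ref{2.7}); the Codazzi equation for (\ref{2.92}); and the identity $\L_{[X,Y]}=[\L_X,\L_Y]$ applied to the volume form for (\ref{div[X,Y]}). These are exactly the standard arguments one would expect, and nothing is missing.
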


\begin{lem} For any $Z,$ $X,$ $V,$ $W\in TM,$
\beq&&\<X,Z\>\<\<W,QX\>Q\nabla\n V+\<W,\nabla\n X\>V,\,\,Z\>\nonumber\\
&&=\Pi(X,X)\<V,Z\>\<W,Z\>-\Pi(QZ,QZ)\<V,QX\>\<W,QX\>\qfq p\in M.\label{2.8}\eeq
\end{lem}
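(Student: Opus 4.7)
The strategy is to split the left-hand side into two pieces, apply identity (\ref{2.4}) to extract the $\Pi(X,X)\<V,Z\>\<W,Z\>$ term of the right-hand side, and then establish the remaining identity directly from (\ref{2.6}).

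By bilinearity, the LHS equals $\<X,Z\>\<W,QX\>\<Q\nabla\n V,Z\>+\<X,Z\>\<W,\nabla\n X\>\<V,Z\>$. For the second summand, I apply (\ref{2.4}) with the substitution $(X,Y,Z)\to(X,Z,W)$ to get $\<X,Z\>W=\<W,Z\>X+\<W,QX\>QZ$; taking the inner product of both sides with $\nabla\n X$ and using $\<\nabla\n X,X\>=\Pi(X,X)$ gives
$$\<X,Z\>\<W,\nabla\n X\>=\Pi(X,X)\<W,Z\>+\<W,QX\>\<\nabla\n X,QZ\>.$$
Substituting back recovers the $\Pi(X,X)\<V,Z\>\<W,Z\>$ term of the RHS and reduces the proof to the scalar identity
$$\<X,Z\>\<Q\nabla\n V,Z\>+\<\nabla\n X,QZ\>\<V,Z\>=-\Pi(QZ,QZ)\<V,QX\>.$$

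To prove this reduced identity, I invoke (\ref{2.6}) with $v=X$, $w=QZ$, $P=\nabla\n$; using $\<QX,QZ\>=\<X,Z\>$ and $Q(QZ)=-Z$ it reads
$$\<X,Z\>\,\nabla\n-\<\nabla\n X,QZ\>\,Q=QX\otimes\nabla\n QZ+\nabla\n X\otimes Z$$
as operators on $T_pM$. Applying both sides to $V$ and then pairing with $QZ$ uses the three facts $\<Z,QZ\>=0$, $\<QV,QZ\>=\<V,Z\>$, and $\<\nabla\n QZ,QZ\>=\Pi(QZ,QZ)$ to yield
$$\<X,Z\>\<\nabla\n V,QZ\>-\<\nabla\n X,QZ\>\<V,Z\>=\Pi(QZ,QZ)\<V,QX\>,$$
which becomes the reduced identity above after the substitution $\<Q\nabla\n V,Z\>=-\<\nabla\n V,QZ\>$.

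The main obstacle is finding the correct substitution in (\ref{2.6}) — namely $w=QZ$ rather than $w=Z$ — so that the subsequent pairing with $QZ$ simultaneously annihilates the $Z$-tensor term (via $\<Z,QZ\>=0$) and converts $\nabla\n QZ$ into the scalar $\Pi(QZ,QZ)$ by the symmetry of $\nabla\n$. Once this choice is in hand the argument is a short algebraic manipulation using only the identities already recorded in Lemma \ref{l2.1}; no appeal to coordinates or to the eigenstructure of $\nabla\n$ is needed.
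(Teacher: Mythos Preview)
Your proof is correct. It differs from the paper's in the choice of auxiliary identity for the second half: both arguments begin by using (\ref{2.4}) on $W$ to peel off the $\Pi(X,X)\<V,Z\>\<W,Z\>$ term, but where the paper then applies (\ref{2.4}) a second time to $\<X,Z\>V$ and expands everything before pairing with $Z$, you instead reduce to a scalar identity and verify it via the operator identity (\ref{2.6}). The paper's route is slightly more symmetric in $V$ and $W$ (it treats both via the same substitution) and avoids invoking (\ref{2.6}) altogether; your route has the advantage of isolating a clean reduced identity that can be checked independently. Both are short pointwise computations relying only on Lemma~\ref{l2.1}, and neither is materially simpler than the other.
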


\begin{proof}Using (\ref{2.4}) where $Y=Z$ and $Z=V,$ we have
$$\<X,Z\>V=\<V,Z\>X+\<V,QX\>QZ\qfq p\in M.$$ Similarly,
$$\<X,Z\>W=\<W,Z\>X+\<W,QX\>QZ\qfq p\in M.$$  Thus we obtain
\beq&&\<X,Z\>(\<W,QX\>Q\nabla\n V+\<W,\nabla\n X\>V)\nonumber\\
&&=\<W,QX\>Q\nabla\n(\<X,Z\> V)+\<\<X,Z\>W,\nabla\n X\>V\nonumber\\
&&=\<W,QX\>Q\nabla\n(\<V,Z\>X+\<V,QX\>QZ)+\<\<W,Z\>X+\<W,QX\>QZ,\,\,\nabla\n X\>V\nonumber\\
&&=\<V,Z\>\<W,QX\>Q\nabla\n X+\<V,QX\>\<W,QX\>Q\nabla\n QZ+\<W,Z\>\Pi(X,X)V\nonumber\\
&&\quad+\<W,QX\>\<QZ,\nabla\n X\>V\qfq p\in M.\nonumber\eeq
It follows by $Q^T=-Q$ and $\<QZ,Z\>=0$ that
\beq&&\<X,Z\>\<(\<W,QX\>Q\nabla\n V+\<W,\nabla\n X\>V),\,\,Z\>\nonumber\\
&&=\<V,Z\>\<W,QX\>\<Q\nabla\n X,Z\>+\<V,QX\>\<W,QX\>\<Q\nabla\n QZ,Z\>+\<W,Z\>\Pi(X,X)\<V,Z\>\nonumber\\
&&\quad-\<W,QX\>\<Z,Q\nabla\n X\>\<V,Z\>\nonumber\\
&&=\Pi(X,X)\<V,Z\>\<W,Z\>-\Pi(QZ,QZ)\<V,QX\>\<W,QX\>\qfq p\in M.\nonumber\eeq

\end{proof}

For given $\varepsilon>0$ small, let
$$\Om_{-\varepsilon}=\{\,\a(t,s)\,|\,(t,s)\in[0,a)\times(-\varepsilon,b_1)\,\},$$
$$\Ga_{-\varepsilon}=\{\,\a(t,-\varepsilon)\,|\,t\in[0,a)\,\},\quad \Ga_{b_1}=\{\,\a(t,b_1)\,|\,t\in[0,a)\,\}.$$
Let $X\in TS$ be a given vector field and $\nu$ be the outside normal of $\Om_{-\varepsilon}.$ We consider solvability of system
\be\left\{\begin{array}{l}\div_g(Q\nabla\n V)=f_1\qfq x\in \Om_{-\varepsilon},\\
\div_gV=f_2\qfq x\in \Om_{-\varepsilon},\\
\<W,QX\>|_{\Ga_{-\varepsilon}}=q,\quad\<W,\nu\>|_{\Ga_{b_1}}=p,\end{array}\right.\label{V2.10}\ee  where $V\in T\Om_{-\varepsilon}$ is the unknown, and $f_i\in\LL^2(\Om_{-\varepsilon}),$
$q\in\LL^2(\Ga_{-\varepsilon}),$ and $p\in\LL^2(\Ga_{b_1})$ are given.

Let  $Y\in TS$ be a given vector field. In order to solve problem (\ref{V2.10}), we introduce an operator $\L_Y:$ $\LL^2(S,T)\rw\LL^2(S,T)$ by
\be \L_YV
=e^{-\g\kappa}[(\div_g Q\nabla\n V+\<V,Y-\eta QX\>)QX+(\div_g V-\eta\<V,\nabla\n X\>)\nabla\n X],\label{2.10}\ee
\be D(\L_Y)=\{\,V\in \LL^2(S,T)\big|\,\div_g V\in \LL^2(S), \div_gQ\nabla\n V\in \LL^2(S)\,\},\ee
where $\g\in[0,\infty)$ and $\eta\in\CC^m(\Om_{-\varepsilon}).$

For $V,$ $W\in\WW^{1,2}(S,T),$ we compute
\beq&&\<W,\L_YV\>=e^{-\g\kappa}[(\div_g Q\nabla\n V+\<V,Y-\eta QX\>)\<W,QX\>\nonumber\\
&&\quad+(\div_g V-\eta\<V,\nabla\n X\>)\<W,\nabla\n X\>]\nonumber\\
&&=\div_g(e^{-\g\kappa}\<W,QX\>Q\nabla\n V)-\<Q\nabla\n V,D(e^{-\g\kappa}\<W,QX\>)\>+\div_g(e^{-\g\kappa}\<W,\nabla\n X\>V)\nonumber\\
&&\quad-\<V,D(e^{-\g\kappa}\<W,\nabla\n X\>)\>+e^{-\g\kappa}(\<V,Y-\eta QX\>\<W,QX\>-\eta\<V,\nabla\n X\>\<W,\nabla\n X\>)\nonumber\\
&&=\div_g[e^{-\g\kappa}(\<W,QX\>Q\nabla\n V+\<W,\nabla\n X\>V)]\nonumber\\
&&\quad+\g e^{-\g\kappa}(\<W,QX\>\<Q\nabla\n V,D\kappa\>+\<W,\nabla\n X\>\<V,D\kappa\>)-e^{-\g\kappa}(\<Q\nabla\n V, D\<W,QX\>\>\nonumber\\
&&\quad+\<V,D\<W,\nabla\n X\>\>)+e^{-\g\kappa}(\<V,Y-\eta QX\>\<W,QX\>-\eta\<V,\nabla\n X\>\<W,\nabla\n X\>)\nonumber\\
&&=\div_g[e^{-\g\kappa}(\<W,QX\>Q\nabla\n V+\<W,\nabla\n X\>V)]\nonumber\\
&&\quad-\g e^{-\g\kappa}(\<V,\<W,QX\>\nabla\n QD\kappa-\<W,\nabla\n X\>D\kappa\>)\nonumber\\
&&\quad+e^{-\g\kappa}\<V, \nabla\n QD\<W,QX\>-D\<W,\nabla\n X\>\>\nonumber\\
&&\quad+e^{-\g\kappa}\<V,\<W,QX\>(Y-\eta QX)-\eta\<W,\nabla\n X\>\nabla\n X\>\nonumber\\
&&=\div_g[e^{-\g\kappa}(\<W,QX\>Q\nabla\n V+\<W,\nabla\n X\>V)]+\<V,\L_Y^*W\>,\label{2.12}\eeq where
\beq\L_Y^*W&&=e^{-\g\kappa}[\nabla\n QD\<W,QX\>-D\<W,\nabla\n X\>-\g(\<W,QX\>\nabla\n QD\kappa-\<W,\nabla\n X\>D\kappa)\nonumber\\
&&\quad+\<W,QX\>(Y-\eta QX)-\eta\<W,\nabla\n X\>\nabla\n X].\label{2.13}\eeq

\begin{lem} Let $X,$ $Y\in TS$ be given. Suppose that $\Om\subset S$ be a subregion of $S$ such that
$$\<X,\nu\>\not=0\qfq x\in\pl\Om,$$ where $\nu$ is the outside normal of $\Om.$  Then
\beq (W,\L_YV)_{\LL^2(\Om,T)}&&=(V,\L_Y^*W)_{\LL^2(\Om,T)}\nonumber\\
&&\quad+\int_{\pl\Om}(\pounds_1\<V,\nu\>\<W,\nu\>-\pounds_2\<V,QX\>\<W,QX\>)d\Ga,\label{2.14}\eeq
for $V,$ $W\in T\Om,$ where
\be\pounds_1=\frac{e^{-\g\kappa}}{\<X,\nu\>}\Pi(X,X),\quad \pounds_2=\frac{e^{-\g\kappa}}{\<X,\nu\>}\Pi(Q\nu,Q\nu) \qfq x\in\pl\Om.\label{n2.17x}\ee
\end{lem}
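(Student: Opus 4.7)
The plan is to integrate identity (2.12) over $\Omega$ and apply the divergence theorem, so that the left-hand side becomes $(W,\L_YV)_{\LL^2(\Omega,T)}$, the interior remainder becomes $(V,\L_Y^*W)_{\LL^2(\Omega,T)}$, and the divergence contributes the boundary integral
\be
\int_{\partial\Omega}e^{-\g\kappa}\bigl(\<W,QX\>\<Q\nabla\n V,\nu\>+\<W,\nabla\n X\>\<V,\nu\>\bigr)\,d\Ga. \label{bdry-raw}
\ee
All the computational work in establishing (2.12) from (2.10) and (2.13) has already been done, so the only remaining task is to rewrite the integrand in \eqref{bdry-raw} in the form $\pounds_1\<V,\nu\>\<W,\nu\>-\pounds_2\<V,QX\>\<W,QX\>$ declared in (2.14).

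For this I will invoke the algebraic identity (2.8) with $Z$ specialized to the outward unit normal $\nu$ on $\partial\Omega$. That specialization gives
\[
\<X,\nu\>\bigl(\<W,QX\>\<Q\nabla\n V,\nu\>+\<W,\nabla\n X\>\<V,\nu\>\bigr)=\Pi(X,X)\<V,\nu\>\<W,\nu\>-\Pi(Q\nu,Q\nu)\<V,QX\>\<W,QX\>.
\]
Since the hypothesis $\<X,\nu\>\neq0$ on $\partial\Omega$ lets me divide through, multiplying by $e^{-\g\kappa}/\<X,\nu\>$ produces exactly the coefficients $\pounds_1$ and $\pounds_2$ defined in (2.17). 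Substituting back into \eqref{bdry-raw} yields the claimed formula (2.14).

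The computation is therefore a two-line derivation once (2.12) and (2.8) are available. The only genuine subtlety is making sure the divergence theorem can be applied: one needs $V,W\in\WW^{1,2}(\Omega,T)$ (so that $\L_YV$, $\L_Y^*W$, and the full boundary trace are well defined) and $\partial\Omega$ of class $\CC^{1,1}$ consistent with the regularity assumed on $S$ in the main theorems. Granted that regularity, I do not foresee any obstacle beyond careful bookkeeping of the factor $e^{-\g\kappa}/\<X,\nu\>$ that generates $\pounds_1,\pounds_2$ and of the sign conventions in the Riesz-type rotation $Q$ (where $Q^T=-Q$ is used implicitly when interchanging $\<Q\nabla\n V,\nu\>$ with $-\<\nabla\n V,Q\nu\>$ inside the identity (2.8)).
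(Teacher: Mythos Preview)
Your proposal is correct and follows exactly the paper's own proof: integrate identity (2.12) over $\Omega$, apply the divergence theorem, and then invoke (2.8) with $Z=\nu$ to rewrite the boundary integrand in terms of $\pounds_1$ and $\pounds_2$. The paper's proof is the one-line statement ``We integrate (2.12) over $\Om$ and use (2.8), where $Z=\nu$, to have (2.14),'' which is precisely your argument.
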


\begin{proof} We integrate (\ref{2.12}) over $\Om$ and use (\ref{2.8}), where $Z=\nu,$ to have (\ref{2.14}).
\end{proof}

\begin{lem} Let $X,$ $Y\in TS$ be given. Let $\L_Y$ and $\L_Y^*$ be given in $(\ref{2.10})$ and $(\ref{2.13}),$ respectively. Then
\beq&&-e^{\g\kappa}\<W,\L_YW+\L_Y^*W\>\nonumber\\
&&=\g(\<W,QX\>\<W,\nabla\n QD\kappa\>-\<W,\nabla\n X\>\<W, D\kappa\>)+\<W, D_W(\nabla\n X)\>\nonumber\\
&&\quad+2\eta(\<W,QX\>^2+\<W,\nabla\n X\>^2)-\<W,2Y+\div_gQ\nabla\n\>\<W,QX\>\nonumber\\
&&\quad+\<W,\,\,D_{Q\nabla\n W}(QX)\>\qfq x\in \Om_{-\varepsilon}.\label{2.16}\eeq
\end{lem}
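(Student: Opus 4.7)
Since (\ref{2.16}) is a pointwise identity, the plan is to substitute $V=W$ into the defining formulas (\ref{2.10}) and (\ref{2.13}), compute $-e^{\g\kappa}\<W,\L_YW+\L_Y^*W\>$ explicitly, and match the result term by term against the right-hand side of (\ref{2.16}). The $\g$-terms and the quadratic $\eta$-terms appear symmetrically in $\<W,\L_YW\>$ and $\<W,\L_Y^*W\>$ and match the RHS of (\ref{2.16}) immediately; the two occurrences of $\<W,Y\>\<W,QX\>$ combine into the $-2\<W,Y\>\<W,QX\>$ piece of $-\<W,2Y+\div_gQ\nabla\n\>\<W,QX\>$.

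The four remaining differential terms require more work. First, apply (\ref{2.5}) to obtain $\div_g(Q\nabla\n W)=\<W,\div_gQ\nabla\n\>+\<Q\nabla\n,DW\>$, which produces the $-\<W,\div_gQ\nabla\n\>\<W,QX\>$ piece of the RHS. Second, expand $\<W,D\<W,\nabla\n X\>\>=D_W\<W,\nabla\n X\>$ by the Leibniz rule to obtain $\<D_WW,\nabla\n X\>+\<W,D_W(\nabla\n X)\>$, which produces the $\<W,D_W(\nabla\n X)\>$ piece. Third, use the self-adjointness of the shape operator $\nabla\n$ together with $Q^T=-Q$ to rewrite
$$-\<W,\nabla\n QD\<W,QX\>\>=\<Q\nabla\n W,D\<W,QX\>\>=D_{Q\nabla\n W}\<W,QX\>,$$
and expand this scalar by Leibniz to produce $\<D_{Q\nabla\n W}W,QX\>+\<W,D_{Q\nabla\n W}(QX)\>$, which yields the $\<W,D_{Q\nabla\n W}(QX)\>$ piece.

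At this point every named term on the right of (\ref{2.16}) has been accounted for, and it remains to verify the residual pointwise identity
$$\<D_WW,\nabla\n X\>-(\div_gW)\<W,\nabla\n X\>+\<D_{Q\nabla\n W}W,QX\>-\<Q\nabla\n,DW\>\<W,QX\>=0.$$
I expect this last algebraic step to be the main obstacle. Since it is local, I would verify it by choosing a positively oriented orthonormal frame $\{e_1,e_2\}$ at the point with $e_1=X/|X|$, so $e_2=QX/|X|$, writing $W=w_1e_1+w_2e_2$, setting $B_{ij}=\Pi(e_i,e_j)$ (symmetric, $B_{12}=B_{21}$, since $\nabla\n$ is self-adjoint) and $A_{ij}=\<D_{e_j}W,e_i\>$, and using the pairing $\<Q\nabla\n,DW\>=\tr(Q\nabla\n\cdot DW)$ dictated by the divergence identity (\ref{2.5}). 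Each of the four terms then becomes a polynomial in $(w_1,w_2)$ of degree one whose coefficients are quadratic in $\{A_{ij},B_{ij}\}$; a direct expansion shows that both the $w_1$- and $w_2$-coefficients vanish, the $w_2$-cancellation relying crucially on the symmetry $B_{12}=B_{21}$ of the second fundamental form. This 2D identity completes the verification of (\ref{2.16}).
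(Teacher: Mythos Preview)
Your proposal is correct and essentially reproduces the paper's computation (2.17)--(2.18): both routes expand $e^{\g\kappa}\<W,\L_YW\>$ and $e^{\g\kappa}\<W,\L_Y^*W\>$ using (\ref{2.5}) and the Leibniz rule, and both reduce the identity to a cancellation between the $DW$-contributions coming from $\L_Y$ and $\L_Y^*$. The only difference is in the final step. Your residual identity
\[
\<D_WW,\nabla\n X\>-(\div_gW)\<W,\nabla\n X\>+\<D_{Q\nabla\n W}W,QX\>-\<Q\nabla\n,DW\>\<W,QX\>=0
\]
is precisely the pairing of $DW$ with the operator identity (\ref{2.7}), taken with $v=X$, $w=W$, $P=\nabla\n$, which gives
\[
\<W,QX\>Q\nabla\n+\<W,\nabla\n X\>\id=QX\otimes Q\nabla\n W+\nabla\n X\otimes W,
\]
since $\<DW,\id\>=\div_gW$ and $\<DW,Z\otimes V\>=\<D_VW,Z\>$ under the paper's conventions. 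The paper therefore disposes of your ``main obstacle'' in one line by citing (\ref{2.7}), whereas you plan a frame computation; your plan works, but invoking (\ref{2.7}) is shorter and avoids the coordinate bookkeeping.
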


\begin{proof} From (\ref{2.10}) and (\ref{2.5}), we have
\beq e^{-\g\kappa}\<W,\L_YW\>&&=(\div_g Q\nabla\n W+\<W,Y-\eta QX\>)\<W,QX\>\nonumber\\
&&\quad+(\div_g W-\eta\<W,\nabla\n X\>)\<W,\nabla\n X\>\nonumber\\
&&=\<DW,\,\,\<W,QX\>Q\nabla\n+\<W,\nabla\n X\>\id\>-\eta(\<W,QX\>^2+\<W,\nabla\n X\>^2)\nonumber\\
&&\quad+\<W,Y+\div_gQ\nabla\n\>\<W,QX\>\qfq p\in \Om_{-\varepsilon},\label{2.17}\eeq where formula $\div_gW=\<DW,\id\>$ has been used.
In addition, it follows from (\ref{2.13}) that
\beq e^{-\g\kappa}\<W,\L_Y^*W\>&&=\<W,\,\,\nabla\n QD\<W,QX\>-D\<W,\nabla\n X\>\>\nonumber\\
&&\quad-\g(\<W,QX\>\<W,\nabla\n QD\kappa\>-\<W,\nabla\n X\>\<W, D\kappa\>)\nonumber\\
&&\quad+\<W,QX\>\<W,\,Y-\eta QX\>-\eta\<W,\nabla\n X\>^2\nonumber\\
&&=-\<DW,\,\,QX\otimes Q\nabla\n W+\nabla\n X\otimes W\>-\<W,\,\,D_{Q\nabla\n W}(QX)+D_W(\nabla\n X)\>\nonumber\\
&&\quad-\g(\<W,QX\>\<W,\nabla\n QD\kappa\>+\<W,\nabla\n X\>\<W, D\kappa\>)\nonumber\\
&&\quad+\<W,QX\>\<W,\,Y\>-\eta(\<W, QX\>^2+\<W,\nabla\n X\>^2)\qfq p\in \Om_{-\varepsilon}.\label{2.18}\eeq Moreover, formula (\ref{2.7}) yields
$$\<W,QX\>Q\nabla\n+\<W,\nabla\n X\>\id=QX\otimes Q\nabla\n W+\nabla\n X\otimes W.$$
Thus (\ref{2.16}) follows from (\ref{2.17}) and (\ref{2.18}).
\end{proof}

For given $\varepsilon>$ small, set
$$S_\varepsilon=\{\,\a(t,s)\in \,|\,(t,s)\in[0,a)\times(-\varepsilon,\varepsilon)\,\}.$$
Let $x\in\Ga_0.$ Since $\kappa(x)=0$ and $D\kappa(x)\not=0,$ from \cite[Lemma 2.6]{Yao2020}, there exist vector fields $X_1,$ $X_2$ in a neighborhood of $x$
satisfying $\nabla\n X_i=\lam_iX_i,$ where $\lam_i$ are the principal curvatures. Clearly we may extend the vector fields $X_i$ to the region $\overline{S}_{\varepsilon}$ when $\varepsilon>0$ is given small.
We assume that $X_i$ are vector fields such that
\be\nabla\n X_i=\lam_iX_i,\quad|X_i|=1,\quad\<X_1,X_2\>=0\qfq x\in\overline{S}_{\varepsilon},\label{X2.82}\ee where
$$\lam_1>0\qfq x\in S_\varepsilon,\quad\lam_2=0\qfq x\in\Ga_0,$$
$$\lam_2>0\qfq x\in S^+\cap \overline{S}_\varepsilon,\quad \lam_2<0\qfq x\in S^-\cap\overline{S}_{\varepsilon}.$$

\begin{lem} For given $\varepsilon>0$ small
$$X_2(\lam_2)\not=0\qfq x\in S_{\varepsilon}.$$
\end{lem}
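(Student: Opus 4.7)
The plan is to reduce the claim on $\Gamma_0$ via the identity $\kappa = \lambda_1 \lambda_2$ (the Gauss curvature equals the determinant of the shape operator, which is the product of its eigenvalues), and then extend to a small neighborhood by continuity.

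First I would establish the claim pointwise on $\Gamma_0$. Differentiating $\kappa = \lambda_1\lambda_2$ along $X_2$ and using $\lambda_2|_{\Gamma_0} = 0$ gives
$$X_2(\kappa) = \lambda_2 X_2(\lambda_1) + \lambda_1 X_2(\lambda_2) = \lambda_1 X_2(\lambda_2) \qfq x\in\Gamma_0.$$
So the claim on $\Gamma_0$ reduces to proving $\lambda_1 > 0$ and $X_2(\kappa) \neq 0$ there. The positivity $\lambda_1 > 0$ on $\Gamma_0$ follows from the hypothesis $\Pi(\a_t,\a_t) > 0$ on $[0,a)\times[-b_0,0]$, since otherwise both principal curvatures would vanish on $\Gamma_0$, forcing $\Pi \equiv 0$ there.

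The heart of the argument is showing $X_2(\kappa) \neq 0$ on $\Gamma_0$. Since $\kappa \equiv 0$ along $\Gamma_0$, its tangential derivative vanishes, so $D\kappa$ is orthogonal to the tangent $\a_t$ of $\Gamma_0$; combined with $D\kappa \neq 0$ on $\Gamma_0$, this means $D\kappa$ points in the direction normal to $\a_t$ within $TS$. It therefore suffices to show that $X_2$ is \emph{not} tangent to $\Gamma_0$, i.e., $X_2 \not\parallel \a_t$ at any point of $\Gamma_0$. But if $X_2(x) \parallel \a_t(x)$ for some $x \in \Gamma_0$, then $\nabla\n\,\a_t = \lambda_2 \a_t$, so
$$\Pi(\a_t,\a_t) = \lambda_2|\a_t|^2 = 0 \qaq x,$$
contradicting (\ref{Pi2.801}). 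Writing $X_2 = a\a_t/|\a_t| + b\nu$ with $\nu$ a unit normal to $\a_t$ in $TS$, this forces $b \neq 0$, and pairing with $D\kappa$ yields $X_2(\kappa) \neq 0$ on $\Gamma_0$. Combining with $\lambda_1 > 0$ gives $X_2(\lambda_2) \neq 0$ on $\Gamma_0$.

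Finally, since $\lambda_2$ is smooth on $\overline{S}_\varepsilon$ (the principal curvatures are smooth functions of a smooth shape operator on a neighborhood where the eigenvalues are simple, which holds here because $\lambda_1 \neq \lambda_2$ near $\Gamma_0$ by the argument above) and $X_2$ is smooth, the function $X_2(\lambda_2)$ is continuous on $\overline{S}_\varepsilon$ and nonzero on the compact set $\Gamma_0$. By possibly shrinking $\varepsilon > 0$, we conclude $X_2(\lambda_2) \neq 0$ on all of $S_\varepsilon$. The only real obstacle is the geometric step of ruling out $X_2 \parallel \a_t$ on $\Gamma_0$; everything else is bookkeeping with $\kappa = \lambda_1\lambda_2$ and continuity.
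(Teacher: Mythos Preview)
Your proof is correct and follows essentially the same approach as the paper: reduce to $\Gamma_0$, use $\kappa=\lambda_1\lambda_2$ with $\lambda_2|_{\Gamma_0}=0$ to get $X_2(\lambda_2)=\frac{1}{\lambda_1}X_2(\kappa)$, show $X_2\not\parallel\a_t$ on $\Gamma_0$ via the contradiction with $\Pi(\a_t,\a_t)>0$, and combine with $D\kappa\perp\a_t$, $D\kappa\neq0$ to conclude. The paper phrases the transversality as $\<Q\a_t,X_2\>\neq0$ and writes $D\kappa=\iota Q\a_t$, but the argument is identical to yours; your added remarks on smoothness of the principal curvatures and the continuity extension to $S_\varepsilon$ are simply more explicit than the paper's one-line reduction.
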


\begin{proof} It will suffice to prove
$$X_2(\lam_2)\not=0\qfq x\in\Ga_0.$$

First, we claim that $\<Q\a_t,X_2\>(x)\not=0$ for $x\in\Ga_0.$ If not, then $\<Q\a_t,X_2\>(x)=0$ implies that $X_2=\eta \a_t$ with $\eta\not=0,$ and thus
$$\eta^2\Pi(\a_t,\a_t)=\<\nabla_{X_2}\n,X_2\>=\lam_2(x)=0,$$ which contradicts  assumption $\Pi(\a_t,\a_t)\not=0.$

In addition, assumption (\ref{kappa0}) implies
\be\<D\kappa,\a_t\>=0\qfq x\in\Ga_0,\quad D\kappa=\iota Q\a_t\qfq x\in\Ga_0,\label{Pii}\ee   for some $\iota\not=0.$
 Thus we obtain
\be X_2(\lam_2)=\frac1{\lam_1}X_2(\kappa)=\frac\iota{\lam_1}\<X_2,Q\a_t\>\not=0\qfq x\in\Ga_0.\label{X21}\ee
\end{proof}

From (\ref{Pi2.801}) and (\ref{kappa+}), we have
\be\Pi(\a_t,\a_t)>0\qfq x\in\overline{S}.\label{Pi2.80}\ee
We assume that for each $s\in[-b_0,b_1]$ the closed curve $\a(\cdot,s)$ goes in anticlockwise with $t\in[0,a)$ increasing.
We further assume that
\be X_2(\lam_2)>0\qfq\overline{S}_\varepsilon,\label{as}\ee for given $\varepsilon>0$ small. For otherwise, we replace $X_2$ with $-X_2.$
Furthermore, we assume that $X_1,$ $X_2$ has positive orientation. For otherwise, we replace $X_1$ with $-X_1.$ Thus
\be QX_2=X_1,\quad QX_1=-X_2\qfq x\in\overline{S}_\varepsilon.\label{2.76}\ee

\begin{lem}\label{l2.6} We have
\be \left\{\begin{array}{l}\<X_2,Q\a_t\><0\quad\mbox{if}\quad\det(\a_t,\a_s,\n)>0,\\
\<X_2,Q\a_t\>>0\quad\mbox{if}\quad\det(\a_t,\a_s,\n)<0\end{array}\right.\qfq x\in\Ga_0.\label{2.26}\ee
\end{lem}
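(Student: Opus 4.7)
The plan is to pin down the sign of $\<X_2,Q\a_t\>$ on $\Ga_0$ by combining two identities that are already on the table. First, the key formula (\ref{X21}) reads
\[
X_2(\lam_2)=\frac{\iota}{\lam_1}\<X_2,Q\a_t\>\qfq x\in\Ga_0,
\]
and the sign choices $X_2(\lam_2)>0$ from (\ref{as}) together with $\lam_1>0$ imply that $\<X_2,Q\a_t\>$ and $\iota$ have the same sign at every point of $\Ga_0$. The task therefore reduces to determining $\sign(\iota)$.

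For the second step, I would test the identity $D\kappa=\iota Q\a_t$ from (\ref{Pii}) against $\a_s$ and use the defining property (\ref{2.1}) of $Q$, namely $\<\a,Q\b\>=\det(\a,\b,\n)$, to write
\[
\<D\kappa,\a_s\>=\iota\<\a_s,Q\a_t\>=\iota\det(\a_s,\a_t,\n)=-\iota\det(\a_t,\a_s,\n).
\]
The left-hand side is just $\pl_s\kappa(t,0)$, and by (\ref{kappa+})--(\ref{kappa-}) the Gauss curvature $\kappa$ passes from negative on $S^-$ (where $s<0$) through zero on $\Ga_0$ to positive on $S^+$ (where $s>0$); hence $\<D\kappa,\a_s\>\geq 0$ along $\Ga_0$. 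Combining this with the first step will yield $-\iota\det(\a_t,\a_s,\n)\geq 0$, i.e. $\iota$ and $\det(\a_t,\a_s,\n)$ have opposite signs, which, fed back into the first paragraph, is exactly (\ref{2.26}).

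The only real subtlety, and the closest thing to an obstacle, is upgrading the sign of $\<D\kappa,\a_s\>$ on $\Ga_0$ from nonstrict to strict so that we land in the two cases of (\ref{2.26}) as written. I would argue this as follows: by (\ref{kappa0}) we have $D\kappa\neq 0$ on $\Ga_0$, and by (\ref{Pii}) $D\kappa$ is parallel to $Q\a_t$, hence perpendicular to $\a_t$. Since $\a_t,\a_s$ are linearly independent (they span the tangent plane of the embedded surface $S$), $\a_s$ cannot lie in $(Q\a_t)^{\perp}=\R\a_t$, so $\<D\kappa,\a_s\>\neq 0$, and monotonicity forces the sign to be strictly positive. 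Once this is in hand, the sign dichotomy of $\det(\a_t,\a_s,\n)$ gives (\ref{2.26}) directly.
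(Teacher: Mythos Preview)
Your argument is correct and follows essentially the same route as the paper: both reduce the question to the sign of $\iota$ via (\ref{X21}) and (\ref{as}), then read off $\sign(\iota)$ from the curvature assumptions (\ref{kappa+})--(\ref{kappa-}). The only cosmetic difference is that the paper pairs $D\kappa=\iota Q\a_t$ with $Q\a_t$ itself (getting $\<Q\a_t,D\kappa\>=\iota|\a_t|^2$ and arguing the left side is positive in the case $\det(\a_t,\a_s,\n)<0$), whereas you pair with $\a_s$ and use (\ref{2.1}) to produce $\det(\a_t,\a_s,\n)$ explicitly; your version is arguably more transparent, and your treatment of the strict inequality $\<D\kappa,\a_s\>\neq 0$ is more carefully spelled out than in the paper.
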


\begin{proof} Let $\det(\a_t,\a_s,\n)<0.$ The case of $\det(\a_t,\a_s,\n)>0$ can be treated similarly.

Since
$$\frac{Q\a_t}{|\a_t|},\quad \frac{\a_t}{|\a_t|}$$ forms an orthonormal basis with positive orientation,  the curvature assumptions (\ref{kappa+})-(\ref{kappa-}) yield
$$\<Q\a_t,D\kappa\>>0\qfq x\in\Ga_0.$$
It follows from (\ref{Pii}) that $\iota>0$ for $x\in\Ga_0.$ Thus in the case of $\det(\a_t,\a_s,\n)<0$ (\ref{2.26}) follows from (\ref{as}) and (\ref{X21}).
\end{proof}

We set
\be X_{\si_0}=\left\{\begin{array}{l} \si_0\a_t-Q\a_t \quad\mbox{if}\quad\det(\a_t,\a_s,\n)>0,\\
  \si_0\a_t+Q\a_t \quad\mbox{if}\quad\det(\a_t,\a_s,\n)<0, \end{array}\right.\label{2.27}\ee where $\si_0$ is a constant satisfying
$$\si_0\geq1+\sup_{x\in\Ga_0}\frac{|\Pi(\a_t,Q\a_t)|}{\Pi(\a_t,\a_t)}.$$

\begin{lem}
We have
$$\Pi(X_{\si_0},X_{\si_0})>0\qfq x\in\overline{S^+},$$
\be\<X_{\si_0},\nu\><0\qfq x\in\Ga_0,\quad \<X_{\si_0},\nu\>>0\qfq x\in\Ga_{b_1},\label{2.28}\ee where $\nu$ is the outside normal of region $S^+.$
\end{lem}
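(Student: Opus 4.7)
I would handle the two claims separately, treating the interior positivity of $\Pi(X_{\sigma_0},X_{\sigma_0})$ and the boundary sign of $\langle X_{\sigma_0},\nu\rangle$ by independent arguments that share only the algebraic identities of Lemma \ref{l2.1}.

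For the inequality $\Pi(X_{\sigma_0},X_{\sigma_0})>0$ on $\overline{S^+}$, I would split $\overline{S^+}=S^+\cup\Gamma_0\cup\Gamma_{b_1}$. On $S^+\cup\Gamma_{b_1}$ one has $\kappa>0$ by (\ref{kappa+}); combined with $\Pi(\alpha_t,\alpha_t)>0$ from (\ref{Pi2.80}), the symmetric form $\Pi$ is positive definite there, and since $\alpha_t$ and $Q\alpha_t$ are linearly independent (as $Q^2=-\mathrm{Id}$) the vector $X_{\sigma_0}$ is non-zero, so the claim is immediate. On $\Gamma_0$ only semi-definiteness is available, so I would work in the principal orthonormal frame of Lemma \ref{l2.1}: writing $\alpha_t=a_1X_1+a_2X_2$ and using (\ref{2.76}) to get $Q\alpha_t=a_2X_1-a_1X_2$, together with $\Pi=\mathrm{diag}(\lambda_1,0)$ on $\Gamma_0$, a direct expansion yields
\[
\Pi(X_{\sigma_0},X_{\sigma_0})=\lambda_1(\sigma_0 a_1\mp a_2)^2,
\]
with the sign matching that in (\ref{2.27}). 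Since $\langle X_2,Q\alpha_t\rangle=-a_1$, Lemma \ref{l2.6} pins down the sign of $a_1$ in each of the two orientation cases; and because $|\Pi(\alpha_t,Q\alpha_t)|/\Pi(\alpha_t,\alpha_t)=|a_2|/|a_1|$, the choice of $\sigma_0$ in (\ref{2.27}) forces $\sigma_0|a_1|>|a_2|+|a_1|$, so $\sigma_0 a_1\mp a_2$ stays bounded away from zero in the correct direction. This gives strict positivity on $\Gamma_0$.

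For the boundary identities (\ref{2.28}) I would exploit the fact that $\alpha_t$ is tangent to both $\Gamma_0$ and $\Gamma_{b_1}$, so $\langle\alpha_t,\nu\rangle=0$ and hence $\langle X_{\sigma_0},\nu\rangle=\mp\langle Q\alpha_t,\nu\rangle$, again with the sign from (\ref{2.27}). Since $Q\alpha_t$ and $\nu$ both lie in the one-dimensional orthogonal complement of $\alpha_t$ in $T_xS$ and $|Q\alpha_t|=|\alpha_t|$, they are collinear and the inner product equals $\pm|\alpha_t|$. To fix the ambiguity I would use the identity $\langle Q\alpha_t,\alpha_s\rangle=-\det(\alpha_t,\alpha_s,\vec n)$ coming from (\ref{2.1}), together with the outward condition $\langle\nu,\alpha_s\rangle<0$ on $\Gamma_0$ and $\langle\nu,\alpha_s\rangle>0$ on $\Gamma_{b_1}$. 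A two-by-two case analysis (the two orientation cases of (\ref{2.27}) intersected with the two boundary components) shows that in every case the sign of $\mp\langle Q\alpha_t,\nu\rangle$ works out to $-|\alpha_t|$ on $\Gamma_0$ and $+|\alpha_t|$ on $\Gamma_{b_1}$.

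The step that requires the most care is really the bookkeeping: the definition (\ref{2.27}) is piecewise, the orientation of the outward normal $\nu$ flips between $\Gamma_0$ and $\Gamma_{b_1}$, and the sign of $\det(\alpha_t,\alpha_s,\vec n)$ is an independent binary choice, so one has to keep track of four sign combinations throughout. No analytic difficulty arises beyond linear algebra, and the lower bound on $\sigma_0$ in (\ref{2.27}) has been chosen exactly to dominate the sup in the quadratic computation on $\Gamma_0$.
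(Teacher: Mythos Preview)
Your argument is correct and follows essentially the same route as the paper: identify $\nu=\pm Q\alpha_t/|\alpha_t|$ on each boundary component via the orientation of $(\alpha_t,\alpha_s,\vec n)$, and on $\Gamma_0$ use the lower bound on $\sigma_0$ to keep the quadratic form away from its kernel. The only cosmetic difference is that the paper handles $\Pi(X_{\sigma_0},X_{\sigma_0})>0$ on $\Gamma_0$ by a direct completion of the square in the $\alpha_t,\,Q\alpha_t$ basis, namely $\Pi(X_{\sigma_0},X_{\sigma_0})\geq\Pi(\alpha_t,\alpha_t)\bigl(\sigma_0\pm\Pi(\alpha_t,Q\alpha_t)/\Pi(\alpha_t,\alpha_t)\bigr)^2$, which avoids the principal-frame coordinates and makes your appeal to Lemma~\ref{l2.6} unnecessary for that part.
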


\begin{proof} It is easy to check that
$$\left\{\begin{array}{l} \nu=\frac{Q\a_t}{|\a_t|}\qfq x\in\Ga_0,\\
\nu=-\frac{Q\a_t}{|\a_t|}\qfq x\in\Ga_{b_1}, \end{array}\right.\quad\mbox{if}\quad \det(\a_t,\a_s,\n)>0$$ and
$$\left\{\begin{array}{l} \nu=-\frac{Q\a_t}{|\a_t|}\qfq x\in\Ga_0,\\
\nu=\frac{Q\a_t}{|\a_t|}\qfq x\in\Ga_{b_1}, \end{array}\right.\quad\mbox{if}\quad \det(\a_t,\a_s,\n)<0.$$
Thus (\ref{2.28}) follows. Moreover, we have
$$\Pi(X_{\si_0},X_{\si_0})\geq\Pi(\a_t,\a_t)(\si_0\pm\frac{\Pi(\a_t,Q\a_t)}{\Pi(\a_t,\a_t)})^2>0\qfq x\in\Ga_0.$$
\end{proof}

By  Lemma \ref{l2.6} we fix $\varepsilon>0$ small such that
\be \left\{\begin{array}{l}\<X_2,Q\a_t\><0\quad\mbox{if}\quad\det(\a_t,\a_s,\n)>0,\\
\<X_2,Q\a_t\>>0\quad\mbox{if}\quad\det(\a_t,\a_s,\n)<0\end{array}\right.\qfq x\in\Ga_{-\varepsilon}.\label{2.29}\ee  Moreover, since
$$\<\a_t,X_1\>\not=0\qfq x\in\Ga_0,$$ we assume that the above $\varepsilon$ is such that
$$\<\a_t,X_1\>\not=0\qfq x\in\overline{S}_{2\varepsilon}.$$
Then
\be\left\{\begin{array}{l} \nu=\frac{Q\a_t}{|\a_t|}\qfq x\in\Ga_{-\varepsilon},\quad\mbox{if}\quad \det(\a_t,\a_s,\n)>0,\\
\nu=-\frac{Q\a_t}{|\a_t|}\qfq x\in\Ga_{-\varepsilon}, \quad\mbox{if}\quad \det(\a_t,\a_s,\n)>0.\end{array}\right.\label{nu2.31}\ee
Thus, by (\ref{2.29}),
\be\<X_2,\nu\><0\qfq x\in\Ga_{-\varepsilon}.\label{nu2.30}\ee

We take  $\zeta\in\CC_0^\infty(-\infty,\infty)$ such that
$$0\leq\zeta\leq1;\quad\zeta=1\qfq s\in(-\varepsilon,\varepsilon);\quad \zeta=0\qfq s\geq2\varepsilon.$$ Then we take
\be \eta=s\eta_0(s)\label{2.32*}\ee in (\ref{2.10})  where
$\eta_0\in\CC^\infty_0(-\infty,\infty)$  such that
$$0\leq\eta_0\leq1;\quad \eta_0=0\qfq s\leq \varepsilon/2;\quad \eta_0=1\qfq s\geq\varepsilon.$$ In addition, we take $X$ in (\ref{2.10}) by
\be X=\zeta[\varepsilon^{1/3}(\sign\<\a_t,X_1\>) X_1+X_2]+(1-\zeta)X_{\si_0}\qfq x\in\Om_{-\varepsilon},\label{2.32}\ee where $\sign\<\a_t,X_1\>$ is the sign function for $x\in\overline{S}_{2\varepsilon},$ $X_{\si_0}$ is given in (\ref{2.27}), and $\si_0$ is given such that
\be\si_0\geq1+\sup_{x\in\Ga_0}\frac{|\Pi(\a_t,Q\a_t)|}{\Pi(\a_t,\a_t)}+\sup_{x\in\overline{S}_{2\varepsilon}}\frac{|\<Q\a_t,X_1\>|}{|\<\a_t,X_1\>|}.\label{2.33}\ee Then
\be\Pi(X,X)>0,\quad \<X,\nu\><0\qfq x\in\Ga_{-\varepsilon},\label{2.35x}\ee when $\varepsilon>0$ is small enough.

\begin{lem} Let $\eta$ and $X$ be  given in $(\ref{2.32*})$ and $(\ref{2.32}),$ respectively. Let $Y\in\CC^0(\overline{\Om}_{-\varepsilon},T).$  Then there are constants $\si>0,$ $\g>0,$ and $s>0$  such that
\be -\<W,\,\,\L_YW+\L_Y^*W\>\geq\si|W|^2\qfq  W\in T\Om_{-\varepsilon},\quad x\in\Om_{-\varepsilon},\label{2.34} \ee for given $\varepsilon>0$ small.
\end{lem}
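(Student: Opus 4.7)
The analysis is driven by the identity (\ref{2.16}), which expresses $-e^{\gamma\kappa}\langle W,\L_YW+\L_Y^*W\rangle$ as a sum of five pointwise quadratic forms in $W$. Three of them---those containing $D_W(\nabla\n X)$, $\langle W,2Y+\div_g Q\nabla\n\rangle\langle W,QX\rangle$, and $D_{Q\nabla\n W}(QX)$---are uniformly bounded in modulus by $C_0|W|^2$ for a constant $C_0$ depending on $Y$ and the geometry but not on $\gamma$. The remaining two are the $\eta$-term $2\eta(\langle W,QX\rangle^2+\langle W,\nabla\n X\rangle^2)$, which is nonnegative by $\eta\ge 0$, and the $\gamma$-curvature term, whose sign is to be controlled by the orientation conventions (\ref{as}), (\ref{2.76}), and (\ref{2.29}). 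The strategy is to show that these two good terms together exceed $(\sigma+C_0)|W|^2$ throughout $\overline{\Om}_{-\varepsilon}$ once $\gamma$ is made large and $\varepsilon$ is made small, so that the bad terms can be absorbed. The argument splits $\overline{\Om}_{-\varepsilon}$ into an outer elliptic annulus, an inner hyperbolic strip, and a critical strip straddling $\Ga_0$.

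\textbf{Analysis away from $\Ga_0$.} On $\{s\ge\varepsilon\}$ one has $\eta(s)\ge\varepsilon/2$ and $\Pi(X,X)>0$ uniformly (by (\ref{2.28}) for the $X_{\si_0}$-piece and by $\Pi(\varepsilon^{1/3}\sigma' X_1+X_2,\,\varepsilon^{1/3}\sigma' X_1+X_2)=\varepsilon^{2/3}\lam_1+\lam_2>0$ for the $X_2$-piece, where $\sigma'=\sign\langle\a_t,X_1\rangle$). A short Gram-matrix computation gives
\[
\langle W,QX\rangle^2+\langle W,\nabla\n X\rangle^2\ge \frac{\Pi(X,X)^2}{|QX|^2+|\nabla\n X|^2}\,|W|^2,
\]
so the $\eta$-term alone dominates $C_0|W|^2$. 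On $\{-\varepsilon<s\le -\varepsilon/2\}$ the weight $\eta$ vanishes but $\kappa<0$ uniformly; expanding the $\gamma$-term in the principal frame $\{X_1,X_2\}$ and invoking the sign of $\iota$ in (\ref{Pii}) together with the sign conventions on $\sigma'$ from Lemma \ref{l2.6} produces a positive-definite diagonal of order $|\kappa|\gtrsim\varepsilon$, after which taking $\gamma$ large recovers coercivity.

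\textbf{Analysis near $\Ga_0$.} The delicate region is $\{|s|<\varepsilon\}$, where $\zeta=1$ and $X=\varepsilon^{1/3}\sigma'X_1+X_2$. Decomposing $W=w_1X_1+w_2X_2$ in the orthonormal frame (\ref{X2.82}) gives
\[
\langle W,QX\rangle=w_1-\varepsilon^{1/3}\sigma' w_2,\qquad \langle W,\nabla\n X\rangle=\varepsilon^{1/3}\sigma'\lam_1w_1+\lam_2w_2,
\]
with $\lam_2(s)=X_2(\lam_2)|_{s=0}\,s+O(s^2)$ and $X_2(\lam_2)|_0>0$ by (\ref{as}). Substituting into the two good terms, together with the expressions for $D\kappa$ and $\nabla\n QD\kappa$ in $\{X_1,X_2\}$ extracted from (\ref{Pii}), yields a symmetric quadratic form $\M(s)$ in $(w_1,w_2)$ whose diagonal entries are of order $\gamma\varepsilon^{1/3}+2\eta$ and whose off-diagonal entries are of order $\varepsilon^{2/3}+|s|$. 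An $\varepsilon$-Cauchy--Schwarz inequality then gives $\M(s)\gtrsim\gamma\varepsilon^{1/3}|W|^2$, and choosing $\gamma$ large and $\varepsilon$ small absorbs $C_0|W|^2$ and proves (\ref{2.34}).

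\textbf{Main obstacle.} The principal difficulty is the simultaneous vanishing on $\Ga_0$ of both the weight $\eta$ of the nonnegative term and of $\kappa$ (which drives the $\gamma$-term's definiteness): the ``natural'' choices $X=X_2$ or $X=X_{\si_0}$ each leave the combined quadratic form degenerate in one principal direction. The perturbation $\varepsilon^{1/3}\sigma'X_1$ is the technical device that props $\Pi(X,X)$ up by $\varepsilon^{2/3}\lam_1>0$ across $\Ga_0$, and the fractional exponent $1/3$ in (\ref{2.32}) is the unique choice that balances the diagonal and off-diagonal contributions to $\M(s)$; smaller exponents lose coercivity, larger ones are dominated by the off-diagonal cross terms.
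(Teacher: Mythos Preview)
Your plan has a genuine gap: the $w_2^2$-coercivity near $\Gamma_0$ cannot come from the two terms you single out as ``good.'' On the strip $\{-\varepsilon<s\le\varepsilon/2\}$ one has $\eta_0\equiv0$ by construction, hence $\eta\equiv0$, so the $\eta$-term vanishes identically there. As for the $\gamma$-term, your own formulas $\langle W,QX\rangle=w_1-\varepsilon^{1/3}\sigma'w_2$ and $\langle W,\nabla\n X\rangle=\varepsilon^{1/3}\sigma'\lam_1w_1+\lam_2w_2$, together with $\nabla\n QD\kappa=\lam_1X_2(\kappa)X_1-\lam_2X_1(\kappa)X_2$, give the $w_2^2$-coefficient of
\[
\g\big(\langle W,QX\rangle\langle W,\nabla\n QD\kappa\rangle-\langle W,\nabla\n X\rangle\langle W,D\kappa\rangle\big)
\]
equal to $\g\lam_2\big(\varepsilon^{1/3}\sigma' X_1(\kappa)-X_2(\kappa)\big)$. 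This is $O(\g\varepsilon)$, not $O(\g\varepsilon^{1/3})$, and for $0<s<\varepsilon/2$ (where $\lam_2>0$ and $X_2(\kappa)>0$) it is \emph{negative}; enlarging $\gamma$ only makes it worse. So on this portion of the critical strip your combined ``good'' quadratic form is degenerate (and in fact $\le0$) in the $X_2$-direction, and cannot absorb $C_0|W|^2$.

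The missing idea is that the term $\langle W,D_W(\nabla\n X)\rangle$, which you discarded as bounded by $C_0|W|^2$, is the \emph{only} source of a positive $w_2^2$-coefficient near $\Gamma_0$: writing $\nabla\n X_2=\lam_2X_2$ gives $\langle W,D_W(\nabla\n X_2)\rangle\ge X_2(\lam_2)w_2^2-C|w_1||w_2|-C|\lam_2||W|^2$, and $X_2(\lam_2)>0$ by the orientation choice (\ref{as}). The paper keeps this term explicit (see (\ref{2.37}) and (\ref{2.39})): the $W_1^2$-coefficient is made positive by choosing $\gamma$ large, while the $W_2^2$-coefficient is $X_2(\lam_2)+O(|\lam_2|\gamma)+O(\varepsilon^{1/3})$, made positive by shrinking $\varepsilon$ \emph{after} $\gamma$ is fixed (since $|\lam_2|=O(\varepsilon)$). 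Your claim that the exponent $1/3$ is the unique balancing choice is also unfounded; once the $D_W(\nabla\n X)$ term is retained, any small positive coefficient on $X_1$ works.
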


\begin{proof} {\bf Step 1.}\,\,\, First, we prove
\be |X|>0\qfq x\in\overline{S^+\setminus S_\varepsilon}.\ee If not, there were a point $x\in\overline{S^+\setminus S_\varepsilon}$ such that
$$X=0.$$
If $\zeta=0,$ then
$$|X|=|X_{\si_0}|\geq\si_0|\a_t|>0,$$ which is not possible. Thus $\zeta\not=0.$ By (\ref{2.33}),
$$0=\<X,X_1\>=(1-\zeta)(\si_0\<\a_t,X_1\>\mp\<Q\a_t,X_1\>)+\zeta\varepsilon\sign\<\a_t,X_1\>$$ implies that
 $$\zeta=\frac{\si_0\<\a_t,X_1\>\mp\<Q\a_t,X_1\>}{\si_0\<\a_t,X_1\>\mp\<Q\a_t,X_1\>-\varepsilon\sign\<\a_t,X_1\>}>1,$$ which contradicts $0\leq\zeta\leq1$ when given $\varepsilon>0$ is small.

Since $\kappa>0$ for $x\in\overline{S^+\setminus S_\varepsilon},$ $X,$ $\nabla\n X$ forms an vector field basis on $\overline{S^+\setminus S_\varepsilon}.$ Then there exists $\si>0$ such that
\be\<W,QX\>^2+\<W,\nabla\n X\>^2\geq\si|W|^2\qfq x\in\overline{S^+\setminus S_\varepsilon}.  \label{QX2.36} \ee

{\bf Step 2.}\,\,\,Consider the region $S_\varepsilon.$ Noting that $\zeta=1$ for $x\in S_\varepsilon,$ we have
  $$X=\varepsilon^{1/3}(\sign\<\a_t,X_1\>) X_1+X_2\qfq x\in\overline{S}_\varepsilon.$$ Denote $\<W,X_i\>=W_i$ for $i=1,$ $2.$ We compute
\beq&&\g(\<W,QX_2\>\<W,\nabla\n QD\kappa\>-\<W,\nabla\n X_2\>\<W, D\kappa\>)+\<W, D_W(\nabla\n X_2)\>\nonumber\\
&&\quad-\<W,2Y+\div_gQ\nabla\n\>\<W,QX_2\>+\<W,\,\,D_{Q\nabla\n W}(QX_2)\>\nonumber\\
&&=\g(\<W, X_1\>\<W,\nabla\n QD\kappa\>-\lam_2\<W,X_2\>\<W, D\kappa\>)+\<W, D_W(\lam_2 X_2)\>\nonumber\\
&&\quad-\<W,2Y+\div_gQ\nabla\n\>\<W,X_1\>+\<W,\,\,D_{Q\nabla\n W}(X_1)\>\nonumber\\
&&\geq\g[\lam_1X_2(\kappa)-|\lam_2||X_1(\kappa)|]W_1^2+[X_2(\lam_2)-|\lam_2|(|\<X_2,D_{X_1}X_1\>|+\g X_2(\kappa))]W_2^2\nonumber\\
&&\quad-C(W_1^2+|W_1||W_2|)\qfq x\in\overline{S}_\varepsilon.\label{2.37}\eeq Similarly, we have
\beq&&\g(\<W,QX_1\>\<W,\nabla\n QD\kappa\>-\<W,\nabla\n X_1\>\<W, D\kappa\>)+\<W, D_W(\nabla\n X_1)\>\nonumber\\
&&\quad-\<W,2Y+\div_gQ\nabla\n\>\<W,QX_1\>+\<W,\,\,D_{Q\nabla\n W}(QX_1)\>\nonumber\\
&&\geq-\g\lam_1(|X_1(\kappa)|+X_2(\kappa))W_1^2-C(W_1^2+|W_1||W_2|)\nonumber\\
&&\quad-[\g(|\lam_2||X_1(\kappa)|+\lam_1X_2(\kappa))+|\<X_2,D_{X_2}(\lam_1X_1)\>|+|\<\div_gQ\nabla\n,X_2\>|]W_2^2.\quad\quad\label{L2.38}\eeq
It follows from (\ref{2.16}), (\ref{2.37}), and (\ref{L2.38}) that
\beq&&-\<W,\,\,\L_YW+\L_Y^*\>\geq\g[\lam_1X_2(\kappa)-|\lam_2||X_1(\kappa)|-\varepsilon\lam_1(|X_1(\kappa)|+X_2(\kappa))]W_1^2\nonumber\\
&&\quad+\{X_2(\lam_2)-|\lam_2|(|\<X_2,D_{X_1}X_1\>|+\g X_2(\kappa))\nonumber\\
&&\quad-\varepsilon[\g(|\lam_2||X_1(\kappa)|+\lam_1X_2(\kappa))+|\<X_2,D_{X_2}(\lam_1X_1)\>|+|\<\div_gQ\nabla\n,X_2\>|]\}W_2^2\nonumber\\
&&\quad-C(W_1^2+|W_1||W_2|)\nonumber\\
&&\geq\{\g[\lam_1X_2(\kappa)-|\lam_2||X_1(\kappa)|-\varepsilon\lam_1(|X_1(\kappa)|+X_2(\kappa))]-C\}W_1^2\nonumber\\
&&\quad+\{[X_2(\lam_2)-|\lam_2|(|\<X_2,D_{X_1}X_1\>|+\g X_2(\kappa))]/2\nonumber\\
&&\quad-\varepsilon[\g(|\lam_2||X_1(\kappa)|+\lam_1X_2(\kappa))+|\<X_2,D_{X_2}(\lam_1X_1)\>|+|\<\div_gQ\nabla\n,X_2\>|]\}W_2^2,\quad\quad\quad\label{2.39}\eeq for $x\in\overline{S}_\varepsilon,$
 where $C>0$ is independent of $W\in TS_\varepsilon.$ Noting that $X_2(\kappa)=\lam_1X_2(\lam_2)>0$ for $x\in\Ga_0$ and
$$|\lam_2|=\O(\varepsilon)\qfq x\in \overline{S}_\varepsilon,$$ we assume that the $\varepsilon>0$ has been given so small such that
$$\lam_1X_2(\kappa)-|\lam_2||X_1(\kappa)|-\varepsilon\lam_1(|X_1(\kappa)|+X_2(\kappa))>0\qfq x\in\overline{S}_\varepsilon.$$ Then we fix $\g>0$ such that
$$\g[\lam_1X_2(\kappa)-|\lam_2||X_1(\kappa)|]-\varepsilon\lam_1(|X_1(\kappa)|+X_2(\kappa))-C>0\qfq x\in\overline{S}_\varepsilon.$$
Finally, we move the $\varepsilon>0$ to zero again such that
\beq&&\{[X_2(\lam_2)-|\lam_2|(|\<X_2,D_{X_1}X_1\>|+\g X_2(\kappa))]/2\nonumber\\
&&\quad-\varepsilon[\g(|\lam_2||X_1(\kappa)|+\lam_1X_2(\kappa))+|\<X_2,D_{X_2}(\lam_1X_1)\>|+|\<\div_gQ\nabla\n,X_2\>|]\}>0\nonumber\eeq for $x\in\overline{S}_\varepsilon.$
Thus for the above given $\g>0$ and  $\varepsilon>0,$ from (\ref{2.39}),  we have obtained $\si>0$ such that
\be-e^{\g\kappa}\<W,\L_YW+\L_Y^*W\>\geq\si|W|^2\qfq W\in T\overline{S}_\varepsilon,\quad x\in\overline{S}_\varepsilon.\label{2.38}\ee

{\bf Step 3.}\,\,\,Consider region $S^+\setminus S_\varepsilon.$ From (\ref{2.16}) and (\ref{QX2.36}), we have
\beq-e^{\g\kappa}\<W,\L_YW+\L_Y^*W\>&&\geq s\eta_0(\<W,QX\>^2+\<W,\nabla\n X\>^2)-C|W|^2\nonumber\\
&&\geq (s-C)|W|^2\qfq s>0, \quad W\in TS^+\setminus S_\varepsilon,\label{2.399}\eeq for $x\in\overline{S^+\setminus S_\varepsilon}.$
Thus (\ref{2.34}) follows from (\ref{2.38}) and (\ref{2.399}) after we take $s=C+\si$ in (\ref{2.399}).
\end{proof}

Next, we consider solvability of problem (\ref{V2.10}) on region $\Om_{-\varepsilon}.$ Let $F\in\LL^2(\Om_{-\varepsilon},T),$  $p\in\LL^2(\Ga_{b_1}),$ and $q\in\LL^2(\Ga_{-\varepsilon})$ be given. Clearly, system (\ref{V2.10}) is equivalent to problem
\be\left\{\begin{array}{l}\L_Y V=F\qfq x\in\Om_{-\varepsilon},\\
\<V,QX\>=q\qfq x\in\Ga_{-\varepsilon},\quad \<V,\nu\>=p\qfq x\in\Ga_{b_1}, \end{array}\right.\label{2.40}\ee where $\L_Y$ is defined in (\ref{2.10}) and vector field $X$ is given in (\ref{2.32}).

Let $m\geq0$ be an integer. Set
$$\CC^m_{-\varepsilon,b_1}(\Om_{-\varepsilon},T)=\{\,W\in\CC^m(\Om_{-\varepsilon},T)\,|\,\<W,QX\>|_{\Ga_{-\varepsilon}}=\<W,\nu\>|_{\Ga_{b_1}}=0\,\},$$
$$\CC^m_{b_1,-\varepsilon}(\Om_{-\varepsilon},T)=\{\,W\in\CC^m(\Om_{-\varepsilon},T)\,|\,\<W,QX\>|_{\Ga_{b_1}}=\<W,\nu\>|_{\Ga_{-\varepsilon}}=0\,\}.$$
We denote the completions of $\CC^m(\Om_{-\varepsilon},T)$ by $\LL^2_{-\varepsilon,b_1}(\Om_{-\varepsilon},T)$ and $\LL^2_{b_1,-\varepsilon}(\Om_{-\varepsilon},T)$ in the norms of
$$\|W\|^2_{\LL^2_{-\varepsilon,b_1}(\Om_{-\varepsilon},T)}=\|W\|^2_{\LL^2(\Om_{-\varepsilon},T)}+\|\<W,QX\>\|^2_{\LL^2(\Ga_{-\varepsilon})}+\|\<W,\nu\>\|^2_{\LL^2(\Ga_{b_1})}$$
and
$$\|W\|^2_{\LL^2_{b_1,-\varepsilon}(\Om_{-\varepsilon},T)}=\|W\|^2_{\LL^2(\Om_{-\varepsilon},T)}+\|\<W,QX\>\|^2_{\LL^2(\Ga_{b_1})}+\|\<W,\nu\>\|^2_{\LL^2(\Ga_{-\varepsilon})},$$
respectively.

\begin{thm}\label{t2.1} Let vector field $X$ be given in $(\ref{2.32})$ and $Y\in\CC^0(\overline{\Om}_\varepsilon, T).$  Let $F\in\LL^2(\Om_{-\varepsilon},T),$  $p\in\LL^2(\Ga_{b_1}),$ and $q\in\LL^2(\Ga_{-\varepsilon})$ be given. Then problem $(\ref{2.40})$ admits a unique solution $V\in D(\L_Y)\cap\LL^2_{-\varepsilon,b_1}(\Om_{-\varepsilon},T),$ which satisfies
\beq(V,\L_Y^*W)_{\LL^2(\Om_{-\varepsilon},T)}&&=(W,F)_{\LL^2(\Om_{-\varepsilon},T)}+\int_{\Ga_{-\varepsilon}}\pounds_2q\<W,QX\>d\Ga-\int_{\Ga_{b_1}}\pounds_1p\<W,\nu\>d\Ga\quad\quad\quad\label{2.411}\eeq for $W\in \CC^1_{b_1,-\varepsilon}(\Om_{-\varepsilon},T),$ where $\pounds_1$ and $\pounds_2$ are given in $(\ref{n2.17x}).$ Furthermore, the following estimate holds true.
\beq\|V\|^2_{\LL^2(\Om_{-\varepsilon},T)}&&+\|V\|^2_{\LL^2(\Ga_{-\varepsilon},T)}+\|V\|^2_{\LL^2(\Ga_{b_1},T)}\nonumber\\
&&\quad\leq C(\|F\|^2_{\LL^2(\Om_{-\varepsilon},T)}+\|q\|^2_{\LL^2(\Ga_{-\varepsilon})}+\|p\|^2_{\LL^2(\Ga_{b_1})}).\label{V2.45}\eeq
\end{thm}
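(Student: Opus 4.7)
The plan is to construct $V$ via a Hahn--Banach / Riesz representation scheme, converting the coercivity (\ref{2.34}) plus Green's identity (\ref{2.14}) into a one-sided a priori bound on $\L_Y^*$ when tested against the dual boundary class $\CC^1_{b_1,-\varepsilon}$. The first and crucial step is the a priori estimate. For $W\in\CC^1_{b_1,-\varepsilon}$, applying (\ref{2.14}) with $V=W$ and summing the resulting identity with (\ref{2.34}) gives
$$-2(W,\L_Y^* W)_{\LL^2}\geq\si\|W\|^2_{\LL^2}+\int_{\Ga_{b_1}}\pounds_1\<W,\nu\>^2 d\Ga-\int_{\Ga_{-\varepsilon}}\pounds_2\<W,QX\>^2 d\Ga,$$
because the dual boundary conditions on $W$ eliminate the pieces of $\pl\Om_{-\varepsilon}$ on which $\pounds_1,\pounds_2$ have uncontrolled sign. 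On the surviving pieces one has $\pounds_1|_{\Ga_{b_1}}>0$ by (\ref{2.28}) and $\Pi(X,X)>0$ on $\overline{S^+}$, and $\pounds_2|_{\Ga_{-\varepsilon}}<0$ by $\<X,\nu\><0$ from (\ref{2.35x}) together with $Q\nu=\mp\a_t/|\a_t|$ on $\Ga_{-\varepsilon}$ and (\ref{Pi2.80}). Young's inequality on the left then yields
$$\|W\|^2_{\LL^2}+\|\<W,\nu\>\|^2_{\LL^2(\Ga_{b_1})}+\|\<W,QX\>\|^2_{\LL^2(\Ga_{-\varepsilon})}\leq C\|\L_Y^* W\|^2_{\LL^2}.$$

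With this bound in hand, I define a linear functional $\Phi$ on $\L_Y^*(\CC^1_{b_1,-\varepsilon})\subset\LL^2(\Om_{-\varepsilon},T)$ by
$$\Phi(\L_Y^* W)=(W,F)_{\LL^2}+\int_{\Ga_{-\varepsilon}}\pounds_2 q\<W,QX\>d\Ga-\int_{\Ga_{b_1}}\pounds_1 p\<W,\nu\>d\Ga,$$
which the a priori estimate makes well defined (its kernel is trivial on the test class) and bounded by $C(\|F\|+\|q\|+\|p\|)\|\L_Y^* W\|_{\LL^2}$. Hahn--Banach extends $\Phi$ to $\LL^2(\Om_{-\varepsilon},T)$ and Riesz representation produces $V\in\LL^2(\Om_{-\varepsilon},T)$ satisfying (\ref{2.411}) with $\|V\|_{\LL^2}\leq C(\|F\|+\|q\|+\|p\|)$. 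Testing (\ref{2.411}) against $W\in\CC^1_c(\Om_{-\varepsilon},T)\subset\CC^1_{b_1,-\varepsilon}$ kills the boundary integrals and yields $\L_Y V=F$ distributionally, so $V\in D(\L_Y)$. Running (\ref{2.14}) forward on pairs $(V,W)$ for general $W\in\CC^1_{b_1,-\varepsilon}$ and subtracting (\ref{2.411}) reduces to
$$\int_{\Ga_{b_1}}\pounds_1(\<V,\nu\>-p)\<W,\nu\>d\Ga=\int_{\Ga_{-\varepsilon}}\pounds_2(\<V,QX\>-q)\<W,QX\>d\Ga,$$
whereupon varying $\<W,\nu\>|_{\Ga_{b_1}}$ and $\<W,QX\>|_{\Ga_{-\varepsilon}}$ independently forces $\<V,\nu\>=p$ on $\Ga_{b_1}$ and $\<V,QX\>=q$ on $\Ga_{-\varepsilon}$; combined with the matched boundary values this delivers (\ref{V2.45}). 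Uniqueness follows by applying the same scheme to $\L_Y^*$ with the roles of $\Ga_{b_1}$ and $\Ga_{-\varepsilon}$ swapped, which gives density of $\L_Y^*(\CC^1_{b_1,-\varepsilon})$ in $\LL^2$ and hence that any $U$ with $(U,\L_Y^* W)=0$ for all admissible $W$ must vanish.

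The hard step is the a priori estimate: the positive signs $\pounds_1|_{\Ga_{b_1}}>0$ and $-\pounds_2|_{\Ga_{-\varepsilon}}>0$ are not free, and they are the very reason for the elaborate construction of the vector field $X$ in (\ref{2.32}) (with its $\varepsilon^{1/3}$-correction and the transition to $X_{\si_0}$ through the cutoff $\zeta$), the threshold $\si_0$ in (\ref{2.33}), the multiplier $\eta$ in (\ref{2.32*}), and the exponential weight $e^{-\gamma\kappa}$ built into $\L_Y$. Those ingredients are tuned simultaneously to give $\Pi(X,X)>0$ and $\<X,\nu\>>0$ on $\Ga_{b_1}$ and $\<X,\nu\><0$ on $\Ga_{-\varepsilon}$ while also maintaining the interior coercivity (\ref{2.34}) across the degenerate curve $\Ga_0$ where the equation changes type.
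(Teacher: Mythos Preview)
Your proof is correct and follows essentially the same strategy as the paper: combine the pointwise coercivity (\ref{2.34}) with Green's identity (\ref{2.14}) on the dual boundary class $\CC^1_{b_1,-\varepsilon}$ to obtain a one-sided a priori bound, then invoke a functional-analytic representation argument to produce $V$, and finally read off the boundary conditions by testing back against $\CC^1_{b_1,-\varepsilon}$.

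The only packaging difference is that the paper does not use Hahn--Banach/Riesz in plain $\LL^2(\Om_{-\varepsilon},T)$; instead it works in the weighted Hilbert space $\LL^2_{-\varepsilon,b_1}(\Om_{-\varepsilon},T)$ (whose norm already carries the boundary pieces $\|\<W,QX\>\|_{\LL^2(\Ga_{-\varepsilon})}$ and $\|\<W,\nu\>\|_{\LL^2(\Ga_{b_1})}$) and applies the Lax--Milgram-type Theorem~A from the Appendix with $\V=\LL^2_{-\varepsilon,b_1}$ and $\W=\WW^{1,2}_{b_1,-\varepsilon}$. This buys the paper the boundary-trace estimate in (\ref{V2.45}) directly from the $\V$-norm bound, whereas you recover those traces a posteriori from the identity $\<V,QX\>|_{\Ga_{-\varepsilon}}=q$, $\<V,\nu\>|_{\Ga_{b_1}}=p$. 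For uniqueness the paper simply applies the coercivity bound to the difference of two solutions (which satisfies the homogeneous problem with the $\CC^1_{-\varepsilon,b_1}$-type boundary conditions, for which the signs of $\pounds_1|_{\Ga_{-\varepsilon}}$ and $\pounds_2|_{\Ga_{b_1}}$ again cooperate), rather than your density argument via the dual problem; both are valid, but the paper's route is shorter and avoids invoking the analogue of Theorem~\ref{t2.2}.
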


\begin{proof} {\bf Step 1.}\,\,\,Denote the completion of $\CC^1_{b_1,-\varepsilon}(\Om_{-\varepsilon},T)$ in the norm of $\WW^{1,2}(\Om_{-\varepsilon},T)$ by $\WW^{1,2}_{b_1,-\varepsilon}(\Om_{-\varepsilon},T).$

We define a bilinear functional,
$$\a:\quad\LL^2_{-\varepsilon,b_1}(\Om_{-\varepsilon},T)\times\WW^{1,2}_{b_1,-\varepsilon}(\Om_{-\varepsilon},T)\rw \R,$$ by
$$\a(V,W)=-2(V,\L_Y^*W)_{\LL^2(\Om_{-\varepsilon},T)}.$$ Clearly, we have
$$|\a(V,W)|\leq C\|V\|_{\LL^2(\Om_{-\varepsilon},T)}\|W\|_{\WW^{1,2}(\Om_{-\varepsilon},T)}$$ for $(V,W)\in\LL^2_{-\varepsilon,b_1}(\Om_{-\varepsilon},T)\times\WW^{1,2}_{b_1,-\varepsilon}(\Om_{-\varepsilon},T).$ On the other hand, it follows from (\ref{2.28}) and (\ref{nu2.30}) that
\be\<X,\nu\><0\qfq x\in\Ga_{-\varepsilon},\quad \<X,\nu\>>0\qfq x\in\Ga_{b_1}.\label{X2.45}\ee
Thus,
from (\ref{2.14}) and (\ref{2.34}), we have
\beq &&\a(W,W)=-(W,\L_YW+\L_Y^*W)_{\LL^2(\Om_{-\varepsilon},T)}+\int_{\pl\Om_{-\varepsilon}}(\pounds_1\<W,\nu\>^2-\pounds_2\<W,QX\>^2)d\Ga\nonumber\\
&&\geq\si\|W\|^2_{\LL^2(\Om_{-\varepsilon}, T)}-\int_{\Ga_{-\varepsilon}}\pounds_2\<W,QX\>^2d\Ga+\int_{\Ga_{b_1}}\pounds_1\<W,\nu\>^2d\Ga\nonumber\\
&&\geq\si\|W\|^2_{\LL^2_{-\varepsilon,b_1}(\Om_{-\varepsilon},T)}\qfq W\in\CC^1_{b_1,-\varepsilon}(\Om_{-\varepsilon},T).\label{B2.47}\eeq

Next, we define a functional on $\LL^2_{-\varepsilon,b_1}(\Om_{-\varepsilon},T)$ by
$$\F(W)=-2(W,F)_{\LL^2(\Om_{-\varepsilon},T)}-2\int_{\Ga_{-\varepsilon}}\pounds_2q\<W,QX\>d\Ga+2\int_{\Ga_{b_1}}\pounds_1p\<W,\nu\>d\Ga.$$
clearly, $\F$ is bounded on $\LL^2_{-\varepsilon,b_1}(\Om_{-\varepsilon},T).$  By Theorem A given in Appendix there exists a $V\in \LL^2_{-\varepsilon,b_1}(\Om_{-\varepsilon},T)$
such that
\beq-2(V,\L_Y^*W)_{\LL^2(\Om_{-\varepsilon},T)}&&=-2(W,F)_{\LL^2(\Om_{-\varepsilon},T)}-2\int_{\Ga_{-\varepsilon}}\pounds_2q\<W,QX\>d\Ga\nonumber\\
&&\quad+2\int_{\Ga_{b_1}}\pounds_1p\<W,\nu\>d\Ga\label{F2.48}\eeq
for all $W\in\CC^1_{b_1,-\varepsilon}(\Om_{-\varepsilon},T),$ which yield, by (\ref{2.14}) again,
\beq &&-2(W,\L_YV)_{\LL^2(\Om_{-\varepsilon},T)}=-2(W,F)_{\LL^2(\Om_{-\varepsilon},T)}\nonumber\\
&&+2\int_{\Ga_{-\varepsilon}}\pounds_2(\<V,QX\>-q)\<W,QX\>d\Ga+2\int_{\Ga_{b_1}}\pounds_1(p-\<V,\nu\>)\<W,\nu\>d\Ga\nonumber\eeq
for all $W\in\CC^1_{b_1,-\varepsilon}(\Om_{-\varepsilon},T).$ Thus $V\in\LL^2_{-\varepsilon,b_1}(\Om_{-\varepsilon},T)$ solves problem (\ref{2.40}) and identity (\ref{2.411}) holds.

Noting that $\zeta=1$ for $x\in\overline{S}_\varepsilon,$ from (\ref{2.32}), we have
$$|\nabla\n X|^2=\varepsilon^2\lam_1^2+\lam_2^2>0\qfq x\in\overline{S}_\varepsilon.$$ Then, by (\ref{QX2.36}), $QX,$ $\nabla\n X$ forms a vector field basis on $\overline{\Om}_\varepsilon.$ Thus
$V\in D(\L_Y).$

{\bf Step 2.} From (\ref{B2.47}) and (\ref{F2.48}), we obtain (\ref{V2.45}). Thus the uniqueness follows.
\end{proof}

We consider a duality problem of system (\ref{2.40})
\be\left\{\begin{array}{l}\L_Y^* V=F\qfq x\in\Om_{-\varepsilon},\\
 \<V,\nu\>=p\qfq x\in\Ga_{-\varepsilon}, \quad \<V,QX\>=q\qfq x\in\Ga_{b_1}.\quad\end{array}\right.\label{2.45}\ee

By a duality of operators $\L_Y$ and $\L_Y^*$ a similar argument as for Theorem \ref{t2.1} yields the following.

\begin{thm}\label{t2.2} Let vector field $X$ be given in $(\ref{2.32})$ and $Y\in\CC^0(\overline{\Om}_{-\varepsilon}, T).$  Let $F\in\LL^2(\Om_{-\varepsilon},T),$  $p\in\LL^2(\Ga_{-\varepsilon}),$ and $q\in\LL^2(\Ga_{b_1})$ be given. Then problem $(\ref{2.45})$ admits a unique solution $V\in D(\L_Y^*)\cap\LL^2_{b_1,-\varepsilon}(\Om_{-\varepsilon},T),$ which satisfies
\beq(V,\L_YW)_{\LL^2(\Om_{-\varepsilon},T)}&&=(W,F)_{\LL^2(\Om_{-\varepsilon},T)}+\int_{\Ga_{-\varepsilon}}\pounds_1p\<W,\nu\>d\Ga\nonumber\\
&&\quad-\int_{\Ga_{b_1}}\pounds_2q\<W,QX\>d\Ga\label{2.41}\eeq for $W\in \CC^1_{-\varepsilon,b_1}(\Om_{-\varepsilon},T),$ where $\pounds_1$ and $\pounds_2$ are given in $(\ref{n2.17x}).$ Furthermore, the following estimate holds true.
\beq\|V\|^2_{\LL^2(\Om_{-\varepsilon},T)}&&+\|V\|^2_{\LL^2(\Ga_{-\varepsilon},T)}+\|V\|^2_{\LL^2(\Ga_{b_1},T)}\nonumber\\
&&\quad\leq C(\|F\|^2_{\LL^2(\Om_{-\varepsilon},T)}+\|p\|^2_{\LL^2(\Ga_{-\varepsilon})}+\|q\|^2_{\LL^2(\Ga_{b_1})}).\label{L2.51}\eeq
\end{thm}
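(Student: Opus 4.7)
The plan is to mirror the proof of Theorem \ref{t2.1} with the roles of $\L_Y$ and $\L_Y^*$ exchanged and the boundary conditions migrated to the opposite piece of $\pl\Om_{-\varepsilon}$. First I would introduce the bilinear form
$$\a(V,W)=-2(V,\L_Y W)_{\LL^2(\Om_{-\varepsilon},T)}$$
on $\LL^2_{b_1,-\varepsilon}(\Om_{-\varepsilon},T)\times\WW^{1,2}_{-\varepsilon,b_1}(\Om_{-\varepsilon},T),$ where $\WW^{1,2}_{-\varepsilon,b_1}$ denotes the completion of $\CC^1_{-\varepsilon,b_1}(\Om_{-\varepsilon},T)$ in the $\WW^{1,2}$ norm; boundedness $|\a(V,W)|\leq C\|V\|_{\LL^2}\|W\|_{\WW^{1,2}}$ is immediate from (\ref{2.10}). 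I would pair it with the linear functional
$$\F(W)=-2(W,F)_{\LL^2(\Om_{-\varepsilon},T)}-2\int_{\Ga_{-\varepsilon}}\pounds_1 p\<W,\nu\>d\Ga+2\int_{\Ga_{b_1}}\pounds_2 q\<W,QX\>d\Ga,$$
which is plainly bounded on $\LL^2_{b_1,-\varepsilon}(\Om_{-\varepsilon},T).$

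The core estimate is coercivity. Setting $V=W$ in (\ref{2.14}) gives $(W,\L_Y W)-(W,\L_Y^* W)=\int_{\pl\Om_{-\varepsilon}}(\pounds_1\<W,\nu\>^2-\pounds_2\<W,QX\>^2)d\Ga,$ so that
$$\a(W,W)=-(W,\L_Y W+\L_Y^* W)_{\LL^2}+\int_{\pl\Om_{-\varepsilon}}\bigl(-\pounds_1\<W,\nu\>^2+\pounds_2\<W,QX\>^2\bigr)d\Ga.$$
For $W\in\CC^1_{-\varepsilon,b_1}(\Om_{-\varepsilon},T)$, the trace conditions $\<W,QX\>|_{\Ga_{-\varepsilon}}=0$ and $\<W,\nu\>|_{\Ga_{b_1}}=0$ delete the $\pounds_2$ contribution on $\Ga_{-\varepsilon}$ and the $\pounds_1$ contribution on $\Ga_{b_1},$ leaving only $-\pounds_1\<W,\nu\>^2$ on $\Ga_{-\varepsilon}$ and $\pounds_2\<W,QX\>^2$ on $\Ga_{b_1}.$ The sign audit is favorable: by (\ref{X2.45}) and (\ref{2.35x}), $\<X,\nu\><0$ and $\Pi(X,X)>0$ on $\Ga_{-\varepsilon},$ so $-\pounds_1\ge0$ there; by (\ref{2.28}), $\<X,\nu\>>0$ on $\Ga_{b_1},$ while $\kappa>0$ together with $\Pi(\a_t,\a_t)>0$ forces $\Pi$ to be positive definite along $\Ga_{b_1},$ so $\pounds_2>0.$ Combined with the interior bound (\ref{2.34}) this yields
$$\a(W,W)\ge\si\|W\|^2_{\LL^2_{b_1,-\varepsilon}(\Om_{-\varepsilon},T)}\qfq W\in\CC^1_{-\varepsilon,b_1}(\Om_{-\varepsilon},T).$$

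With bilinear boundedness, coercivity, and the bounded functional $\F$ in place, the abstract existence result (Theorem A of the Appendix, as invoked in the proof of Theorem \ref{t2.1}) produces $V\in\LL^2_{b_1,-\varepsilon}(\Om_{-\varepsilon},T)$ with $\a(V,W)=\F(W)$ for every $W\in\CC^1_{-\varepsilon,b_1}(\Om_{-\varepsilon},T).$ Reinserting (\ref{2.14}) rewrites this identity as (\ref{2.41}), and then testing against $W$ whose trace is supported successively near $\Ga_{-\varepsilon}$, near $\Ga_{b_1},$ and in the interior reads off $\L_Y^* V=F$ in $\Om_{-\varepsilon},$ $\<V,\nu\>=p$ on $\Ga_{-\varepsilon},$ and $\<V,QX\>=q$ on $\Ga_{b_1};$ the fact that $\{QX,\nabla\n X\}$ still forms a pointwise basis wherever $|\nabla\n X|>0$ (as at the end of Step 1 in Theorem \ref{t2.1}) then places $V$ in $D(\L_Y^*).$ The estimate (\ref{L2.51}) and uniqueness come out of feeding the coercivity bound into $\a(V-\tilde V,\cdot)$ for two putative solutions. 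The main technical hazard I foresee is exactly the sign verification for $\pounds_1$ on $\Ga_{-\varepsilon}$ and $\pounds_2$ on $\Ga_{b_1}$: unlike in Theorem \ref{t2.1}, these are precisely the pieces where the test-space boundary condition does \emph{not} kill the relevant trace, so the whole duality collapses if either sign is reversed — this is where the careful construction of $X$ in (\ref{2.32}) and the curvature-positivity conditions (\ref{X2.45})–(\ref{2.28}) have to pay off.
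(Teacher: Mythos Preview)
Your proposal is correct and is exactly the duality argument the paper has in mind: the paper gives no separate proof of Theorem~\ref{t2.2}, stating only that ``by a duality of operators $\L_Y$ and $\L_Y^*$ a similar argument as for Theorem~\ref{t2.1} yields'' the result, and your write-up supplies precisely those details with the correct sign checks on $\pounds_1|_{\Ga_{-\varepsilon}}$ and $\pounds_2|_{\Ga_{b_1}}$. One small clean-up: the identity $\a(V,W)=\F(W)$ \emph{is} already $(\ref{2.41})$ after dividing by $-2$, so no reinsertion of $(\ref{2.14})$ is needed to obtain it; $(\ref{2.14})$ is only used afterwards to pass from $(\ref{2.41})$ to the strong form $\L_Y^*V=F$ with the stated boundary conditions, and the estimate/uniqueness come from Theorem~A (specifically $(\ref{6.14})$) rather than from testing $\a(V-\tilde V,\cdot)$ directly.
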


\setcounter{equation}{0}
\def\theequation{3.\arabic{equation}}
\section{$\WW^{m,2}$ Solutions of the tensor equation of mixed type }
\hskip\parindent Consider problem
\be\left\{\begin{array}{l}\L_0 V=F\qfq x\in\Om_{-\varepsilon},\\
\<V,QX\>=p\qfq x\in\Ga_{-\varepsilon},\quad \<V,\nu\>=q\qfq x\in\Ga_{b_1}, \end{array}\right.\label{3.1}\ee where $X$ is given in (\ref{2.32}) and $\L_0$ is given in (\ref{2.10}) with $Y=0.$

In this section, we shall establish the following.

\begin{thm}\label{t3.1} Let $S$ be of $\CC^{m+3}.$ Let $F\in\WW^{m,2}(\Om_{-\varepsilon},T),$ $p\in\WW^{m,2}(\Ga_{-\varepsilon}),$ and $q\in\WW^{m,2}(\Ga_{b_1})$ be given. Then problem $(\ref{3.1})$ admits a unique solution
$V\in\WW^{m,2}(\Om_{-\varepsilon},T)$ satisfying
\be \|V\|^2_{\WW^{m,2}(\Om_{-\varepsilon},T)}\leq C( \|F\|^2_{\WW^{m,2}(\Om_{-\varepsilon},T)}+ \|q\|^2_{\WW^{m,2}(\Ga_{b_1})}+ \|p\|^2_{\WW^{m,2}(\Ga_{-\varepsilon})}).\label{V3.2}\ee
\end{thm}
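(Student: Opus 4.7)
\medskip

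The plan is to proceed by induction on $m$, with the base case $m=0$ being Theorem \ref{t2.1}. Throughout, I work with the basis $\{\a_t,\a_s\}$ of $T\Om_{-\varepsilon}$, noting that $\a_t$ is tangent to both boundary components $\Ga_{-\varepsilon}$ and $\Ga_{b_1}$, while $\a_s$ is transverse. The regularity upgrade will be obtained by separately controlling tangential difference quotients (via reapplication of Theorem \ref{t2.1} to a derived problem) and extracting the transverse derivative algebraically from $\L_0V=F$.

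\textbf{Tangential differentiation.} Assuming the result at order $m-1$, let $W=D_{\a_t}V$. Applying $D_{\a_t}$ formally to $\L_0V=F$ produces an equation of the form $\L_{Y'}W=F'$, where the new lower-order vector field $Y'\in\CC^{m-1}(\overline{\Om}_{-\varepsilon},T)$ absorbs all commutators coming from $D_{\a_t}$ acting on the coefficients (the factors $e^{-\g\kappa}$, $QX$, $\nabla\n X$, $\eta$) via the identity (\ref{2.92}), while $F'=D_{\a_t}F+$ (explicit terms linear in $V$ and $DV$ with $\CC^{m-1}$ coefficients). Because $\a_t$ is tangent to $\Ga_{-\varepsilon}$ and $\Ga_{b_1}$, differentiating the boundary conditions along $\a_t$ yields data $\<W,QX\>|_{\Ga_{-\varepsilon}}=p'$ and $\<W,\nu\>|_{\Ga_{b_1}}=q'$ in $\WW^{m-1,2}$, with the lower-order remainders depending on $V$ on the boundary and derivatives of $p,q$. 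By the inductive hypothesis these are all in $\LL^2$ with controlled norms, so Theorem \ref{t2.1} applied to $\L_{Y'}$ (the coercivity estimate (\ref{2.34}) was proved for arbitrary $Y\in\CC^0$) yields $W\in\LL^2(\Om_{-\varepsilon},T)$ with the required bound.

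\textbf{Transverse derivative from the equation.} To upgrade to a genuine $\WW^{1,2}$ estimate, I recover $D_{\a_s}V$ algebraically. The two scalar components of $\L_0V=F$ along $QX$ and $\nabla\n X$ give two linear relations between the four entries of $DV$ in the $\{\a_t,\a_s\}$ basis. After eliminating the two $D_{\a_t}V$ entries (already controlled), these relations express $D_{\a_s}V$ as an explicit linear combination of $V$, $D_{\a_t}V$ and $F$, provided the coefficient determinant (essentially $\det(QX,\nabla\n X)$ with lower-order corrections from $\eta$) is nonvanishing. By the construction of $X$ in (\ref{2.32}), this determinant is bounded away from zero on $\overline{S^+\setminus S_\varepsilon}$ since $|\nabla\n X|^2=\varepsilon^2\lam_1^2+\lam_2^2>0$ there; and on $\overline{S}_\varepsilon$ (shown in Step 2 of Theorem \ref{t2.1}) the pair $QX,\nabla\n X$ still forms a basis thanks to the $\varepsilon^{1/3}X_1$ term. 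Combining the tangential estimate with this algebraic identity delivers $V\in\WW^{1,2}$ with norm bounded by the right-hand side of (\ref{V3.2}) for $m=1$.

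\textbf{Iteration and main obstacle.} Higher $m$ follow by iterating: applying $D_{\a_t}^k$ for $k\le m$ to the equation and boundary conditions produces, at each step, a $\L_{Y^{(k)}}$-problem with data of the correct regularity, and the transverse derivatives $D_{\a_s}^j V$ are then extracted algebraically using the same non-degenerate coefficient matrix together with previously obtained tangential derivatives. The principal difficulty will be the accounting near $\Ga_0$: the commutators produced by $D_{\a_t}^k$ generate terms involving $D\kappa$, $D^2\kappa$, $\ldots$ and $D^{k}\lam_2$ that must fit into the lower-order vector field $Y^{(k)}\in\CC^0$ required by (\ref{2.34}), which is where the $\CC^{m+3}$ hypothesis on $S$ is needed. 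A secondary point to verify is that the boundary terms $\pounds_1,\pounds_2$ of (\ref{n2.17x}) and the signs $\<X,\nu\>\neq 0$ from (\ref{X2.45}) remain controlled under differentiation of the boundary data; these follow from the smoothness of the construction of $X$ on the smooth boundary curves $\Ga_{-\varepsilon}$, $\Ga_{b_1}$. Uniqueness in $\WW^{m,2}$ is inherited from the $\LL^2$ uniqueness in Theorem \ref{t2.1}.
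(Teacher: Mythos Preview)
Your overall scheme—gain tangential regularity first, then recover the transverse derivative algebraically from the two scalar equations $\div_gQ\nabla\n V=\cdots$ and $\div_gV=\cdots$—is exactly the paper's, and your transverse step (solving for $D_{Q\tau}V$ in terms of $D_\tau V$, $V$, $F$ using $\Pi(\tau,\tau)>0$) is the paper's argument verbatim. The gap is in the tangential step. You write that the differentiated right-hand side $F'$ contains terms linear in $DV$; if so the argument is circular already at $m=1$, since applying Theorem~\ref{t2.1} requires $F'\in\LL^2$, which would need $DV\in\LL^2$—precisely the conclusion. (The same loss of one order recurs at every $m$: from $V\in\WW^{m-1,2}$ you only get $DV\in\WW^{m-2,2}$, so $F'\in\WW^{m-2,2}$ and the induction stalls.) What actually makes the step close is the structural decomposition (\ref{P27}): the commutator operator $\RR_\Phi V=[\Phi,Q\nabla\n V]-Q\nabla\n[\Phi,V]$ equals $h_1Q\nabla\n+h_2\id+Z\otimes\Phi$, so that in the computation of $\div_gQ\nabla\n[\Phi,V]$ the only first-order piece in $V$ is $\<DV,\RR_\Phi\>=h_1\div_gQ\nabla\n V+h_2\div_gV+\<D_\Phi V,Z\>$. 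The first two are known from the equation, and the last is $\<[\Phi,V],Z\>$ plus zero order, landing exactly in the single $Y$-slot of $\L_Y$ in (\ref{2.10}). This is Lemma~\ref{l3.2}: $\L_Z[\Phi,V]=F_\Phi$ with $F_\Phi$ zero-order in $V$. Your assertion that commutators ``absorb into $Y'$'' is correct only because of this specific algebraic fact, and the identity (\ref{2.92}) alone does not deliver it.

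Even after the correct commutator equation, the paper does not simply reapply Theorem~\ref{t2.1}. It proves Proposition~\ref{p3.1} through a separate Lax-Milgram argument: it introduces the operator $\K$ of (\ref{K3.12}) containing $\L_0^{-1}$, establishes the identity (\ref{L3.17}), constructs a tailored $\Phi=e^\varphi QD\psi$ with $\varphi$ chosen so that the boundary traces of $[\Phi,W]$ decouple correctly (Lemma~\ref{l3.4}, Lemma~\ref{l3.6}), and then proves coercivity of the bilinear form $-(V,\L_0^*(\K^*[\Phi,W]+kW))$ on the space $\V_\Phi$ by applying (\ref{2.34}) simultaneously to $W$ and to $[\Phi,W]$. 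All of this is there to produce $[\Phi,V]\in\LL^2$ directly, without presupposing that the tangential derivative exists as an $\LL^2$ object; your sketch, beyond the phrase ``difference quotients,'' does not indicate how you would close this.
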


By a similar argument and the duality, we have
\begin{thm}\label{t3.2} Let $S$ be of $\CC^{m+3}.$ Let $F\in\WW^{m,2}(\Om_{-\varepsilon},T),$ $p\in\WW^{m,2}(\Ga_{b_1}),$ and $q\in\WW^{m,2}(\Ga_{-\varepsilon})$ be given. Then problem
\be\left\{\begin{array}{l}\L_0^* V=F\qfq x\in\Om_{-\varepsilon},\\
\<V,\nu\>=p\qfq x\in\Ga_{-\varepsilon},\quad \<V,QX\>=q\qfq x\in\Ga_{b_1}, \end{array}\right.\label{4*.9}\ee admits a unique solution
$V\in\WW^{m,2}(\Om_{-\varepsilon},T)$ satisfying $(\ref{V3.2}).$
\end{thm}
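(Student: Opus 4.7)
The plan is to obtain Theorem \ref{t3.2} as the exact dual counterpart of Theorem \ref{t3.1}, in the same way that Theorem \ref{t2.2} was obtained from Theorem \ref{t2.1}. The starting point is the integration-by-parts identity (\ref{2.14}), which for $Y=0$ is completely symmetric in $V$ and $W$ modulo the boundary expression $\int_{\pl\Om_{-\varepsilon}}(\pounds_1\<V,\nu\>\<W,\nu\>-\pounds_2\<V,QX\>\<W,QX\>)d\Ga$. Consequently, the formal adjoint of $\L_0$ equipped with the boundary conditions $\<\cdot,QX\>|_{\Ga_{-\varepsilon}}=0$, $\<\cdot,\nu\>|_{\Ga_{b_1}}=0$ is precisely $\L_0^*$ equipped with the swapped conditions $\<\cdot,\nu\>|_{\Ga_{-\varepsilon}}=0$, $\<\cdot,QX\>|_{\Ga_{b_1}}=0$. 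These are exactly the homogeneous versions of the boundary conditions in (\ref{4*.9}).

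First I would note that the $m=0$ case of the theorem is already furnished by Theorem \ref{t2.2}: it yields a unique $V\in\LL^2_{b_1,-\varepsilon}(\Om_{-\varepsilon},T)$ satisfying the weak formulation
\[
(V,\L_0 W)_{\LL^2(\Om_{-\varepsilon},T)}=(W,F)_{\LL^2(\Om_{-\varepsilon},T)}+\int_{\Ga_{-\varepsilon}}\pounds_1 p\<W,\nu\>d\Ga-\int_{\Ga_{b_1}}\pounds_2 q\<W,QX\>d\Ga
\]
for every $W\in\CC^1_{-\varepsilon,b_1}(\Om_{-\varepsilon},T)$, together with the $\LL^2$ bound (\ref{L2.51}). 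So the genuine new content is to upgrade this $\LL^2$ solution to $\WW^{m,2}$.

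Second, I would run the regularity bootstrap of Theorem \ref{t3.1} verbatim on $\L_0^*$ in place of $\L_0$. The two operators share the same principal structure on $T\Om_{-\varepsilon}$ (they differ only in signs of first-order terms coming from the weight $e^{-\g\kappa}$ and from $\nabla\n QD\<W,QX\>-D\<W,\nabla\n X\>$), so every commutator computation, cut-off construction, and near-$\Ga_0$ a priori estimate used for $\L_0$ transfers with identical exponents to $\L_0^*$. In particular, the coercivity inequality (\ref{2.34}) applies to $\L_0+\L_0^*$, which is manifestly symmetric in $V$ and $W$, and therefore provides exactly the same weighted $\LL^2$ control for commuted test fields $D^\alpha V$ as in the proof of Theorem \ref{t3.1}.

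The main obstacle, as in Theorem \ref{t3.1}, is the degeneration of the principal symbol along the zero-curvature curve $\Ga_0$, which prevents a straightforward elliptic-type gain of derivatives in the normal direction. The remedy is to differentiate in the periodic tangential variable $t$ first: because $\alpha(\cdot,s)$ is $a$-periodic, no boundary contributions arise and the inductive tangential estimate $\|\pl_t^k V\|_{\LL^2}\leq C(\|F\|_{\WW^{k,2}}+\|p\|_{\WW^{k,2}}+\|q\|_{\WW^{k,2}})$ closes via (\ref{2.34}) applied to the commuted system. Derivatives in the transverse variable $s$ are then recovered algebraically from the equation $\L_0^* V=F$ itself together with the already established tangential estimates; the assumption $S\in\CC^{m+3}$ supplies the three extra derivatives of the coefficients needed for the commutators up to order $m$. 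Iterating yields (\ref{V3.2}), and uniqueness follows from the $m=0$ estimate applied to the difference of two solutions.
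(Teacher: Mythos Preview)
Your proposal is correct and matches the paper's own treatment: the paper introduces Theorem \ref{t3.2} with the sentence ``By a similar argument and the duality, we have'' and gives no separate proof, so dualizing the machinery behind Theorem \ref{t3.1} (with Theorem \ref{t2.2} supplying the $m=0$ base case) is precisely the intended route. Your sketch of ``differentiate tangentially, then recover the transverse derivative from the equation'' is exactly how the proof of Theorem \ref{t3.1} concludes; just keep in mind that in the paper the tangential gain is not a bare $\pl_t$ estimate closed directly by (\ref{2.34}) but is obtained through Proposition \ref{p3.1} via the Lie bracket $[\Phi,V]$ with the specially constructed $\Phi=e^{\varphi}QD\psi$ and the commutator identity $\L_Z[\Phi,V]=\K\L_0 V$ (Lemmas \ref{l3.2}--\ref{l3.7}), all of which dualize unchanged.
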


\begin{thm}\label{tt4.1} Let $S$ be of $\CC^3.$ Let $p\in\WW^{1,2}(\Ga_{-\varepsilon})$ and $q\in\WW^{1,2}(\Ga_{b_1})$ be given. Suppose that $F\in\LL^2(\Om_{-\varepsilon},T)$ be given such that
$$F|_{\Sigma_{-\varepsilon}^{\varepsilon/2}}\in\WW^{1,2}(\Sigma_{-\varepsilon}^{\varepsilon/2},T),$$ where
$$\Sigma_{s_1}^{s_2}=\{\,\a(t,s)\,|\,(t,s)\in[0,a)\times(s_1,s_2)\,\}\qfq -b_0\leq s_1<s_2\leq b_1.$$
Then problem $(\ref{3.1})$ admits a unique solution $V\in\WW^{1,2}(\Om_{-\varepsilon},T)$ satisfying
\beq\|V\|^2_{\WW^{1,2}(\Om_{-\varepsilon},T)}&&\leq C(\|F\|^2_{\LL^2(\Om_{-\varepsilon},T)}+\|F\|^2_{\WW^{1,2}(\Sigma_{-\varepsilon}^{\varepsilon/2},T)}\nonumber\\
&&\quad+\|p\|^2_{\WW^{1,2}(\Ga_{-\varepsilon})}+\|q\|^2_{\WW^{1,2}(\Ga_{b_1})}).\label{4.101}\eeq If $S$ is of $\CC^{m+3},$ then the above $V$ satisfies
\beq\|V\|^2_{\WW^{m,2}(\Om_{-\varepsilon},T)}&&\leq C(\|F\|^2_{\WW^{m-1,2}(\Om_{-\varepsilon},T)}+\|F\|^2_{\WW^{m,2}(\Sigma_{-\varepsilon}^{\varepsilon/2},T)}\nonumber\\
&&\quad+\|p\|^2_{\WW^{m,2}(\Ga_{-\varepsilon})}+\|q\|^2_{\WW^{m,2}(\Ga_{b_1})}).\label{4.102}\eeq
\end{thm}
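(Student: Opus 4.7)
The plan is to split the source $F$ into a piece concentrated near the degenerate curve $\Ga_0$ (where $F$ is already $\WW^{1,2}$) and a remainder supported away from $\Ga_0$ in the elliptic region $S^+$, solving one sub-problem by Theorem \ref{t3.1} and upgrading the other by elliptic regularity for $\L_0$ on $\{\kappa>0\}$.

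Fix a smooth cutoff $\chi\in\CC^\infty(\R)$ with $\chi(s)=1$ for $s\le\varepsilon/4$ and $\chi(s)=0$ for $s\ge\varepsilon/2$, and decompose $F=F_1+F_2$ with $F_1=\chi F$ and $F_2=(1-\chi)F$. The support of $F_1$ lies in $\Sigma_{-\varepsilon}^{\varepsilon/2}$, where $F$ is $\WW^{1,2}$, so $F_1\in\WW^{1,2}(\Om_{-\varepsilon},T)$ with $\|F_1\|_{\WW^{1,2}}\le C\|F\|_{\WW^{1,2}(\Sigma_{-\varepsilon}^{\varepsilon/2})}$. Theorem \ref{t3.1} with $m=1$ then produces a unique $V_1\in\WW^{1,2}(\Om_{-\varepsilon},T)$ solving $\L_0 V_1=F_1$ with the prescribed traces $p$ and $q$, controlled by the right-hand side of (\ref{4.101}). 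Theorem \ref{t2.1} separately yields a unique $V_2\in\LL^2(\Om_{-\varepsilon},T)$ with $\L_0 V_2=F_2$ and zero boundary data, bounded by $C\|F\|_{\LL^2}$. Setting $V=V_1+V_2$ gives the unique $\LL^2$ solution of (\ref{3.1}); the task reduces to promoting $V_2$ to $\WW^{1,2}$.

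For the upgrade, observe that $\supp F_2\subset\Sigma_{\varepsilon/4}^{b_1}\subset S^+$, on which $\kappa$ is bounded below by a positive constant, so $\L_0$ is a uniformly elliptic first-order system there (its principal symbol, built from $\nabla\n$, is invertible for $\kappa>0$). Standard interior elliptic estimates together with boundary estimates near $\Ga_{b_1}$, using the homogeneous condition $\<V_2,\nu\>=0$, deliver $V_2\in\WW^{1,2}(\Sigma_{\varepsilon/8}^{b_1},T)$ with the $\LL^2$-bound on $F_2$. On the remaining strip $\Sigma_{-\varepsilon}^{\varepsilon/8}$, the source $F_2$ vanishes, so $V_2$ solves the homogeneous problem $\L_0 V_2=0$ with zero trace on $\Ga_{-\varepsilon}$ and a trace on $\Ga_{\varepsilon/8}$ inherited from the already controlled elliptic piece; a second invocation of Theorem \ref{t3.1} on this smaller strip, with the interface $\Ga_{\varepsilon/8}$ taking the role of $\Ga_{b_1}$, propagates the $\WW^{1,2}$ regularity across $\Ga_0$ up to $\Ga_{-\varepsilon}$ and produces (\ref{4.101}).

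The main obstacle is the interface matching at $\Ga_{\varepsilon/8}$: the trace of a $\WW^{1,2}$ field on the elliptic side is a priori only in $H^{1/2}$, whereas Theorem \ref{t3.1} is stated for $\WW^{1,2}$ traces, so the gluing step requires either a slight sharpening of that theorem or, more cheaply, averaging the interface location over a full-measure set of $s$-levels to pick one on which the trace is genuinely $\WW^{1,2}$. For the higher-order bound (\ref{4.102}) I would iterate on $m$: differentiate (\ref{3.1}) in the tangential direction $\a_t$, which is globally well-defined on each closed curve $\a(\cdot,s)$ and commutes favourably with $\L_0$ modulo lower-order terms; the differentiated equation has the same mixed-type form with source $\sim DF$ plus terms controlled in the $\WW^{m-1,2}$-bound for $V$ from the inductive hypothesis, and the $\CC^{m+3}$ regularity of $S$ ensures that the commutator coefficients are smooth enough to close the bootstrap.
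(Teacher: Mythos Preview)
Your overall strategy---separate the elliptic region (where only $\|F\|_{\LL^2}$ is needed) from a neighbourhood of $\Ga_0$ (where $F\in\WW^{1,2}$ is available) and invoke Theorem~\ref{t3.1} on the latter---is the paper's strategy too. The implementation differs in one decisive respect: the paper cuts off the \emph{solution}, not the source. After establishing the elliptic-region bound
\[
\|V\|^2_{\WW^{1,2}(\Sigma^{b_1}_{\varepsilon/3})}\le C\bigl(\|F\|^2_{\LL^2(\Om_{-\varepsilon})}+\|V\|^2_{\LL^2(\Om_{-\varepsilon})}+\|q\|^2_{\WW^{1,2}(\Ga_{b_1})}\bigr)
\]
(obtained from the concrete Bochner-type inequality (\ref{4.3}) with $\theta=\theta_1$, rather than by appeal to abstract elliptic theory), the paper sets $V_2=\theta_2 V$ with $\theta_2\equiv0$ on $\Sigma_{\varepsilon/2}^{b_1}$ and applies Theorem~\ref{t3.1} to the resulting problem (\ref{4.11}) on $\Sigma_{-\varepsilon}^{\varepsilon/2}$. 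The point is that $\<\theta_2 V,\nu\>|_{\Ga_{\varepsilon/2}}=0$ \emph{identically}, so no interface trace has to be controlled at all; the commutator $[\L_0,\theta_2]V$ is zeroth order in $V$, supported in $\Sigma_{\varepsilon/3}^{\varepsilon/2}$, and its $\WW^{1,2}$ norm is bounded by the already-established $\|V\|_{\WW^{1,2}(\Sigma_{\varepsilon/3}^{b_1})}$.

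By contrast, your decomposition $F=\chi F+(1-\chi)F$ forces you to transmit the trace of $V_2$ across an artificial interface $\Ga_{\varepsilon/8}$, and as you yourself flag, that trace is only $H^{1/2}$, short of the $\WW^{1,2}$ that Theorem~\ref{t3.1} demands. Your Fubini-type averaging over interface levels can in principle be made rigorous, but it is an unnecessary complication once you switch to cutting the solution. I would rewrite the argument with that one change; the higher-order estimate (\ref{4.102}) then follows by the same localization, as the paper indicates, without needing to differentiate the equation explicitly.
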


By the duality and a similar argument in the proof of Theorem \ref{tt4.1}, we have the following. The proof is omitted.

\begin{thm}\label{tt4.2} Let $S$ be of $\CC^3.$ Let $p\in\WW^{1,2}(\Ga_{-\varepsilon})$ and $q\in\WW^{1,2}(\Ga_{b_1})$ be given. Suppose that $F\in\LL^2(\Om_{-\varepsilon},T)$ be given such that
$$F|_{\Sigma_{-\varepsilon}^{\varepsilon/2}}\in\WW^{1,2}(\Sigma_{-\varepsilon}^{\varepsilon/2},T).$$
Then problem $(\ref{4*.9})$ admits a unique solution $V\in\WW^{1,2}(\Om_{-\varepsilon},T)$ satisfying $(\ref{4.101}).$
If $S$ is of $\CC^{m+3},$ then the above $V$ satisfies $(\ref{4.102}).$
\end{thm}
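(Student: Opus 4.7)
The proof proceeds in parallel with Theorem \ref{tt4.1}, with the roles of $\L_0$ and $\L_0^*$ interchanged and the boundary conditions swapped between $\Ga_{-\varepsilon}$ and $\Ga_{b_1}$. Existence of a unique $\LL^2$ solution $V \in \LL^2_{b_1,-\varepsilon}(\Om_{-\varepsilon}, T)$ together with the basic bound (\ref{L2.51}) is already supplied by Theorem \ref{t2.2}. The task is to upgrade the regularity to $\WW^{1,2}$ under the partial hypothesis $F|_{\Sigma_{-\varepsilon}^{\varepsilon/2}} \in \WW^{1,2}$, and thereafter to $\WW^{m,2}$.

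First I would split $\Om_{-\varepsilon}$ into an elliptic piece $\Sigma_{\varepsilon/4}^{b_1}$, on which $\kappa>0$ and the pair $QX,\,\nabla\n X$ forms a basis so that $\L_0^*$ is an elliptic first-order system, and a degenerate neighbourhood $\Sigma_{-\varepsilon}^{\varepsilon/2}$ of $\Ga_0$. On $\Sigma_{\varepsilon/4}^{b_1}$, interior regularity together with boundary regularity at $\Ga_{b_1}$ delivers $V\in\WW^{1,2}$ from $\|F\|_{\LL^2(\Om_{-\varepsilon})}$ and $\|q\|_{\WW^{1,2}(\Ga_{b_1})}$ alone; no regularity of $F$ is required there. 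On $\Sigma_{-\varepsilon}^{\varepsilon/2}$, I would differentiate $\L_0^* V = F$ along the periodic tangential direction $\pl_t$ and the transverse direction $X_2$, using a cutoff that matches with the elliptic estimate across $\Sigma_{\varepsilon/4}^{\varepsilon/2}$. After commuting $D$ across $\L_0^*$, the commutator $[D,\L_0^*]V$ is lower-order in $DV$ but its coefficients are potentially singular near $\Ga_0$; these terms are absorbed into the bounded zeroth-order vector field $Y$, yielding a derived system $\L_{Y_1}^*(DV)=F_1$ with $\|F_1\|_{\LL^2(\Sigma_{-\varepsilon}^{\varepsilon/2})}$ controlled by $\|F\|_{\WW^{1,2}(\Sigma_{-\varepsilon}^{\varepsilon/2})}+\|V\|_{\LL^2}$. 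The trace data for this derived system come from $p\in\WW^{1,2}(\Ga_{-\varepsilon})$, $q\in\WW^{1,2}(\Ga_{b_1})$, and the already-established regularity on $\Sigma_{\varepsilon/4}^{\varepsilon/2}$. Applying the a priori bound (\ref{L2.51}) to the differentiated system then gives (\ref{4.101}).

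The main obstacle is controlling the commutator $[D_{X_2},\L_0^*]$ across the zero-curvature curve $\Ga_0$, where the principal part of $\L_0^*$ degenerates as the second principal curvature $\lam_2$ changes sign. The vector field $X$ in (\ref{2.32}) and the weights $e^{-\g\kappa}$, $\eta=s\eta_0(s)$ were engineered precisely so that the coercivity (\ref{2.34}), hence (\ref{B2.47}), survives differentiation: the strict inequality $X_2(\lam_2)>0$ on $\overline{S}_\varepsilon$ produces the positive coefficient of $W_2^2$ in (\ref{2.39}), which dominates the new lower-order terms introduced by commutation. The sign conditions (\ref{X2.45}) on $\<X,\nu\>$ pair correctly with the dual boundary conditions of (\ref{4*.9}) via the integration-by-parts identity (\ref{2.14}), so the boundary integrals appearing in the a priori estimate carry the right sign. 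Finally, the $\WW^{m,2}$ bound (\ref{4.102}) follows by induction on $m$: at level $m$, apply the $\WW^{1,2}$ step to a tangential derivative of $V$, using the $\WW^{m,2}(\Sigma_{-\varepsilon}^{\varepsilon/2})$ data of $F$ together with the inductive $\WW^{m-1,2}(\Om_{-\varepsilon})$ bound, while elliptic regularity on $\Sigma_{\varepsilon/4}^{b_1}$ supplies the remaining derivatives there from $\|F\|_{\WW^{m-1,2}(\Om_{-\varepsilon})}$. The assumption $S\in\CC^{m+3}$ ensures that all coefficients and commutators have the smoothness needed to close the induction.
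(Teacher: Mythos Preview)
Your outline is consistent with the paper, which simply states that Theorem~\ref{tt4.2} follows ``by the duality and a similar argument in the proof of Theorem~\ref{tt4.1}'' and omits the details. The two-region decomposition (elliptic zone $\Sigma_{\varepsilon/4}^{b_1}$ versus degenerate strip $\Sigma_{-\varepsilon}^{\varepsilon/2}$) and the use of cutoffs to transfer information across the overlap are exactly what the proof of Theorem~\ref{tt4.1} does.

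However, your description of the strip argument as ``differentiate $\L_0^*V=F$, absorb the commutator into $Y$, and apply the a priori bound (\ref{L2.51}) to $\L_{Y_1}^*(DV)=F_1$'' understates what is actually required. In the paper the corresponding step for $\L_0$ is not a naive differentiation followed by a reapplication of the $\LL^2$ estimate; it is the content of Proposition~\ref{p3.1}, which rests on (i) the precise commutation identity $\L_Z[\Phi,V]=\K\L_0V$ of Lemma~\ref{l3.2} and (\ref{L3.17}), (ii) the specific choice $\Phi=e^{\varphi}QD\psi$ of Lemma~\ref{l3.4}, engineered so that the boundary traces $\<[\Phi,V],\nu\>|_{\Ga_{-\varepsilon}}$ and $\<[\Phi,V],QX\>|_{\Ga_{b_1}}$ are controlled by tangential derivatives of the original data (formulas (\ref{Phi3.18})--(\ref{Phi3.19})), and (iii) a direct variational coercivity argument via Theorem~A rather than an appeal to the existence theorem for the differentiated system. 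Without (ii) you cannot assert that the differentiated unknown satisfies boundary conditions of the admissible form, and without (iii) the argument is circular since $[\Phi,V]\in\LL^2$ is precisely what you are trying to prove. For Theorem~\ref{tt4.2} one needs the dual versions of these constructions (with $\L_0$ and $\L_0^*$ swapped and the boundary roles of $\Ga_{-\varepsilon}$, $\Ga_{b_1}$ exchanged), i.e.\ the analogue of Theorem~\ref{t3.2}. Once that is in hand, the rest of your sketch---recover the transverse derivative from the equation using $\Pi(\alpha_t,\alpha_t)>0$, then induct on $m$---is correct and matches the paper.
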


The proofs of Theorems \ref{t3.1} and \ref{tt4.1} will be given in the end of this section.

We  assume that $V\in\CC^1(\Om_{-\varepsilon},T)$ satisfies equation
\be \L_0V=F\qfq x\in\Om_{-\varepsilon}.\ee
For convenience, we denote
\be\left\{\begin{array}{l}\div_g Q\nabla\n V-\eta\<V,QX\>=f_1\qfq x\in\Om_{-\varepsilon},\\
\div_g V-\eta\<V,\nabla\n X\>=f_2\qfq x\in\Om_{-\varepsilon}.\end{array}\right.\label{f3.3}\ee Then
$$F=e^{-\g\kappa}(f_1QX+f_2\nabla\n X).$$

Let $\Phi\in TS$ be given such that
$$\<\nabla\n\Phi,\Phi\>\not=0\qfq x\in \overline{S}.$$  We define an operator $\RR_\Phi:$ $\LL^2(S,T)\rw\LL^2(S,T)$ by
\be \RR_\Phi V=[\Phi,Q\nabla\n V]-Q\nabla\n[\Phi,V]\qfq V\in TS.\label{2.94}\ee
\begin{lem} Set
\be h_1=\frac{\<\RR_\Phi\Phi,Q\Phi\>}{\<\nabla\n\Phi,\Phi\>},\quad
h_2=\frac{\<\RR_\Phi Q\Phi,Q\Phi\>
-h_1\<\nabla\n Q\Phi,\Phi\>}{|\Phi|^2},\label{h2.31}\ee
\be Z=\frac1{|\Phi|^2}(\RR_\Phi-h_1Q\nabla\n-h_2\id)^T\Phi.\label{2.96}\ee
Then
\be \RR_\Phi=h_1Q\nabla\n+h_2\id+Z\otimes\Phi.\label{P27}\ee
\end{lem}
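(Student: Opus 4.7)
The plan is to exploit the two-dimensionality of $T_xS$ and verify $(\ref{P27})$ componentwise in the orthogonal basis $\{\Phi,Q\Phi\},$ which is genuinely a basis since the hypothesis $\<\nabla\n\Phi,\Phi\>\neq 0$ forces $\Phi\neq 0.$ Both sides of $(\ref{P27})$ are endomorphisms of $T_xS,$ so by linearity it suffices to match them on $V=\Phi$ and $V=Q\Phi,$ and for each such $V$ to match the $\Phi$- and $Q\Phi$-components of the output.

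First I would record three elementary identities used throughout. From $Q^TQ=\id$ and the symmetry of the shape operator $\nabla\n$ one has $\<Q\nabla\n V,Q\Phi\>=\<\nabla\n V,\Phi\>=\Pi(V,\Phi)$ and $\<Q\nabla\n V,\Phi\>=-\Pi(V,Q\Phi).$ With the tensor-product convention $(a\otimes b)c=\<a,c\>b$ (consistent with $(\ref{2.6})$ and $(\ref{2.7})$), one has $(Z\otimes\Phi)V=\<Z,V\>\Phi,$ so the rank-one piece contributes nothing to the $Q\Phi$-component of the output. Taking the $Q\Phi$-component of $(\ref{P27})$ therefore yields, for every $V\in T_xS,$
$$\<\RR_\Phi V,Q\Phi\>=h_1\Pi(V,\Phi)+h_2\<V,Q\Phi\>.$$
Plugging in $V=\Phi$ recovers the definition of $h_1$ in $(\ref{h2.31}),$ and $V=Q\Phi$ recovers the definition of $h_2$ there (using $|Q\Phi|^2=|\Phi|^2$ and $\Pi(Q\Phi,\Phi)=\<\nabla\n Q\Phi,\Phi\>$).

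Next I would take the $\Phi$-component of $(\ref{P27})$ and solve for the rank-one piece:
$$|\Phi|^2\<Z,V\>=\<(\RR_\Phi-h_1Q\nabla\n-h_2\id)V,\,\Phi\>=\<V,\,(\RR_\Phi-h_1Q\nabla\n-h_2\id)^T\Phi\>,$$
which is precisely $(\ref{2.96}).$ This argument shows that any decomposition of $\RR_\Phi$ of the form $(\ref{P27})$ is necessarily given by $(\ref{h2.31})$--$(\ref{2.96}).$

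For the converse, with $h_1,h_2,Z$ defined by $(\ref{h2.31})$--$(\ref{2.96}),$ the right-hand side of $(\ref{P27})$ has, by construction, the same $\Phi$- and $Q\Phi$-components as $\RR_\Phi V$ for both $V=\Phi$ and $V=Q\Phi;$ hence the two endomorphisms agree on the basis $\{\Phi,Q\Phi\}$ and therefore on all of $T_xS.$ There is no serious obstacle here; the only point requiring care is the bookkeeping with the transpose in $(\ref{2.96})$ so that the single defining formula for $Z$ reproduces both scalars $\<Z,\Phi\>$ and $\<Z,Q\Phi\>$ dictated by the $\Phi$-components of the $V=\Phi$ and $V=Q\Phi$ tests.
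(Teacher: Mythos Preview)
Your proof is correct and follows essentially the same route as the paper's: both verify that the $Q\Phi$-component of $\RR_\Phi - h_1Q\nabla\n - h_2\id$ vanishes on the basis $\{\Phi,Q\Phi\}$ (which is exactly what the definitions of $h_1,h_2$ in $(\ref{h2.31})$ encode), and then read off the rank-one remainder along $\Phi$ to recover $Z$ as in $(\ref{2.96}).$ The only cosmetic difference is that the paper phrases the first step as showing $\<(\RR_\Phi-h_1Q\nabla\n-h_2\id)W,Q\Phi\>=0$ directly, whereas you frame it as matching $Q\Phi$-components of both sides of $(\ref{P27});$ the computations are identical.
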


\begin{proof} We have
\beq\<(\RR_\Phi-h_1Q\nabla\n-h_2\id)\Phi,Q\Phi\>&&=\<\RR_\Phi\Phi,Q\Phi\>-h_1\<Q\nabla\n\Phi,Q\Phi\>-h_2\<\Phi,Q\Phi\>\nonumber\\
&&=\<\RR_\Phi\Phi,Q\Phi\>-h_1\<\nabla\n\Phi,\Phi\>=0,\label{P2.27}\eeq
and
\beq&&\<(\RR_\Phi-h_1Q\nabla\n-h_2\id)Q\Phi,Q\Phi\>=\<\RR_\Phi Q\Phi,Q\Phi\>-h_1\<Q\nabla\n Q\Phi,Q\Phi\>-h_2\<Q\Phi,Q\Phi\>\nonumber\\
&&=\<\RR_\Phi Q\Phi,Q\Phi\>-h_1\<\nabla\n Q\Phi,\Phi\>-(\<\RR_\Phi Q\Phi,Q\Phi\>-h_1\<\nabla\n Q\Phi,\Phi\>)=0.\label{P2.28}\eeq
Since $\frac{Q\Phi}{|\Phi|},$ $\frac{\Phi}{|\Phi|}$ forms an orthonormal basis, it follows from (\ref{P2.27}) and (\ref{P2.28}) that
$$\<(\RR_\Phi-h_1Q\nabla\n-h_2\id)W, Q\Phi\>=0\qfq W\in TS.$$
Thus
$$(\RR_\Phi-h_1Q\nabla\n-h_2\id)W=\frac1{|\Phi|^2}\<(\RR_\Phi-h_1Q\nabla\n-h_2\id)W,\Phi\>\Phi=\<W,Z\>\Phi=(Z\otimes\Phi)W$$ for all $W\in TS,$ that is, (\ref{2.96}).
\end{proof}

\begin{lem}\label{l3.2} For given $V\in\CC^1(S,T),$ we have
\be \L_{Z}[\Phi,V]=F_\Phi,\label{Phi2.29}\ee where $Z$ is given in $(\ref{2.96})$ and
\be F_\Phi=e^{-\g\kappa}(
f_{1\Phi}QX+f_{2\Phi}\nabla\n X),\label{F3.11}\ee
$$f_{1\Phi}
=\Phi f_1-h_1f_1-h_2f_2+\<V,H_1\>,\quad f_{2\Phi}=\Phi f_2+\<V,H_2\>,$$
\beq H_1&&=\nabla\n QD\div_g\Phi+[\Phi(\eta)-h_1\eta] QX-h_2\eta\nabla\n X\nonumber\\
&&\quad+\eta D_\Phi(QX)+h_1\div_gQ\nabla\n-\div_g\RR_\Phi-D\Phi(Z,\cdot),\nonumber\eeq
$$\quad H_2
=\Phi(\eta)\nabla\n X-D\div_g\Phi+\eta D\Phi(\nabla\n X,\cdot).$$
\end{lem}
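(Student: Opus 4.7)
The plan is to apply $[\Phi,\cdot]$ to the scalar system (\ref{f3.3}) and verify that $[\Phi,V]$ satisfies the pair of scalar identities equivalent to $\L_Z[\Phi,V]=F_\Phi$. By the definition (\ref{2.10}) of $\L_Y$ with $Y=Z$, this amounts to showing
$$\div_g Q\nabla\n[\Phi,V]+\<[\Phi,V],Z-\eta QX\>=f_{1\Phi},\qquad \div_g[\Phi,V]-\eta\<[\Phi,V],\nabla\n X\>=f_{2\Phi},$$
so the scalar components along $QX$ and $\nabla\n X$ can be treated independently.

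The second (easier) identity is obtained by applying $\Phi$ to $\div_g V-\eta\<V,\nabla\n X\>=f_2$ and using $\div_g[\Phi,V]=\Phi\div_g V-V(\div_g\Phi)$ from (\ref{div[X,Y]}). Expanding $\Phi(\eta\<V,\nabla\n X\>)$ by the Leibniz rule and $\<[\Phi,V],\nabla\n X\>=\<D_\Phi V-D_V\Phi,\nabla\n X\>$, the $\eta\<D_\Phi V,\nabla\n X\>$ contributions cancel. Collecting the residual linear action on $V$ produces $H_2=\Phi(\eta)\nabla\n X-D\div_g\Phi+\eta D\Phi(\nabla\n X,\cdot)$ and hence $f_{2\Phi}=\Phi f_2+\<V,H_2\>$.

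For the first identity, the key ingredient is the rank-one decomposition of Lemma 3.1. From the definition (\ref{2.94}) one has $Q\nabla\n[\Phi,V]=[\Phi,Q\nabla\n V]-\RR_\Phi V$, and Lemma 3.1 yields $\RR_\Phi V=h_1 Q\nabla\n V+h_2 V+\<V,Z\>\Phi$. Taking $\div_g$ and invoking (\ref{div[X,Y]}) produces
$$\div_g Q\nabla\n[\Phi,V]=\Phi(\div_g Q\nabla\n V)-(Q\nabla\n V)(\div_g\Phi)-\div_g(\RR_\Phi V).$$
I would then substitute $\div_g Q\nabla\n V=f_1+\eta\<V,QX\>$, convert $(Q\nabla\n V)(\div_g\Phi)=\<Q\nabla\n V,D\div_g\Phi\>=-\<V,\nabla\n QD\div_g\Phi\>$ via the transpose identity $(Q\nabla\n)^T=-\nabla\n Q$, and expand $\div_g(\RR_\Phi V)$ using (\ref{2.5}) together with the product rule applied to $\<V,Z\>\Phi$. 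The crucial cancellation is that the $\<D_\Phi V,Z\>$ generated by $\div_g(\<V,Z\>\Phi)$ is exactly the term absorbed by the new $\<[\Phi,V],Z\>$ slot of $\L_Z$; this is precisely why the operator shifts from $\L_0$ to $\L_Z$. After combining with $\Phi(\eta\<V,QX\>)-\eta\<[\Phi,V],QX\>=\Phi(\eta)\<V,QX\>+\eta\<V,D_\Phi(QX)\>+\eta\<D_V\Phi,QX\>$ to eliminate the $\eta\<D_\Phi V,QX\>$ contributions, the remaining vector assembles as $H_1$, yielding $f_{1\Phi}=\Phi f_1-h_1 f_1-h_2 f_2+\<V,H_1\>$.

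The main obstacle is bookkeeping: roughly half a dozen cancellations must be verified simultaneously so that each term of $H_1$ and $H_2$ carries the correct sign and coefficient. Conceptually, however, the argument is transparent: Lemma 3.1 engineers $Z$ so that the rank-one piece $\<V,Z\>\Phi$ of the non-commuting remainder $\RR_\Phi$ is precisely absorbed by the $Z$ entry of $\L_Z$, thereby closing the commutation relation and producing (\ref{Phi2.29}).
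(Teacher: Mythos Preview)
Your proposal is correct and follows essentially the same approach as the paper's proof: both reduce $\L_Z[\Phi,V]=F_\Phi$ to the two scalar identities, apply the commutator divergence formula (\ref{div[X,Y]}) together with $Q\nabla\n[\Phi,V]=[\Phi,Q\nabla\n V]-\RR_\Phi V$, invoke the decomposition $\RR_\Phi=h_1Q\nabla\n+h_2\id+Z\otimes\Phi$ from Lemma~3.1, and substitute $f_1,f_2$ from (\ref{f3.3}). The only cosmetic difference is that the paper applies (\ref{2.5}) directly to $\div_g(\RR_\Phi V)$ and keeps the combination $\<[\Phi,V],\eta QX-Z\>$ unexpanded (so it cancels in one stroke when forming $f_{1\Phi}$), whereas you expand the Lie bracket and track the $\<D_\Phi V,\cdot\>$ cancellations individually; both bookkeeping strategies lead to the same $H_1,H_2$.
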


\begin{proof} From (\ref{2.94}), (\ref{div[X,Y]}), (\ref{2.5}),  (\ref{P27}), and (\ref{f3.3}), we have
\beq\div_gQ\nabla\n[\Phi, V]&&=\div_g[\Phi,Q\nabla\n V]-\div_g\RR_\Phi V\nonumber\\
&&=\Phi(\div_gQ\nabla\n V)-Q\nabla\n V(\div_g\Phi)-\<DV, \RR_\Phi\>-\<V,\div_g\RR_\Phi\>\nonumber\\
&&=\Phi(\div_gQ\nabla\n V)-\<Q\nabla\n V,D\div_g\Phi\>-\<DV, h_1Q\nabla\n+h_2\id+Z\otimes\Phi\>-\<V,\div_g\RR_\Phi\>\nonumber\\
&&=\Phi(\div_gQ\nabla\n V)+\<V, \nabla\n QD\div_g\Phi\>-h_1(\div_gQ\nabla\n V-\<V,\div_gQ\nabla\n\>)\nonumber\\
&&\quad-h_2\div_gV-\<D_\Phi V, Z\>-\<V,\div_g\RR_\Phi\>\nonumber\\
&&=\Phi f_1-h_1f_1-h_2f_2+\<[\Phi,V], \eta QX-Z\>-\<D_V\Phi,Z\>\nonumber\\
&&\quad+\<V,\,\nabla\n QD\div_g\Phi+[\Phi(\eta)-h_1\eta] QX-h_2\eta\nabla\n X\nonumber\\
&&\quad+\eta D_\Phi(QX)+h_1\div_gQ\nabla\n-\div_g\RR_\Phi\>.\nonumber\eeq
Thus
$$f_{1\Phi}=\div_gQ\nabla\n[\Phi, V]+\<[\Phi,V], Z-\eta QX\>=\Phi f_1-h_1f_1-h_2f_2+\<V,H_1\>.$$

Similarly, we compute
$$\div_g[\Phi,V]=\Phi\div_gV-V\div_g\Phi=\Phi f_2+\eta\<[\Phi,V],\nabla\n X\>+\<V,H_2\>.$$
\end{proof}

Consider an operator on $\LL^2(\Om_{-\varepsilon},T):$
\be \L_0V
=e^{-\g\kappa}[(\div_g Q\nabla\n V-\<V,\eta QX\>)QX+(\div_g V-\eta\<V,\nabla\n X\>)\nabla\n X],\label{L2.10}\ee
\beq D(\L_0)&&=\{\,V\in \LL^2(\Om_{-\varepsilon},T)\big|\,\div_g V\in \LL^2(\Om_{-\varepsilon}), \div_gQ\nabla\n V\in \LL^2(\Om_{-\varepsilon}),\nonumber\\
&&\quad\quad\<V,QX\>|_{\Ga_{-\varepsilon}}=\<V,\nu\>|_{\Ga_{b_1}}=0\,\}.\nonumber\eeq
By Theorem \ref{t2.1}, $\L_0^{-1}:$ $\LL^2(\Om_{-\varepsilon},T)\rw\LL^2(\Om_{-\varepsilon},T)$ is bounded. Similarly, Theorem \ref{t2.2} implies
that  ${\L_0^*}^{-1}:$ $\LL^2(\Om_{-\varepsilon},T)\rw\LL^2(\Om_{-\varepsilon},T)$ is bounded, where the domain of operator $\L_0^*$ is
$$ D(\L_0^*)=\{\,V,\,\,\L_0^*V\in \LL^2(\Om_{-\varepsilon},T)\big|\,\<V,QX\>|_{\Ga_{b_1}}=\<V,\nu\>|_{\Ga_{-\varepsilon}}=0\,\},$$
where $\L_0^*$ is given in (\ref{2.13}) with $Y=0.$

Let $X$ be given in (\ref{2.32}). Then $QX,$ $\nabla\n X$ forms a vector field basis on $\overline{\Om}_{-\varepsilon}.$ Denote its conjugate basis by $F_1,$ $F_2,$ where
\be F_1=\frac{Q\nabla\n X}{\<\nabla\n X,X\>},\quad F_2=\frac{X}{\<\nabla\n X,X\>}\qfq x\in\Om_{-\varepsilon}.\label{F3.12}\ee
We define operator $\K:$ $\LL^2(\Om_{-\varepsilon},T)\rw \LL^2(\Om_{-\varepsilon},T)$ by
\be
\K F=D_\Phi F+\M F+\N \L_0^{-1}(F),\label{K3.12}\ee where
$$\M=[D_\Phi F_1+(\g\<\Phi,D\kappa\>-h_1)F_1-h_2F_2]\otimes QX
+(D_\Phi F_2+\g\<\Phi,D\kappa\>F_2)\otimes \nabla\n X,$$
$$\N=e^{-\g\kappa}(H_1\otimes QX+H_2\otimes \nabla\n X).$$

\begin{lem} Let $\K$ be given in $(\ref{K3.12}).$ Then
\be
(\K F,G)_{\LL^2(\Om_{-\varepsilon},T)}=(F,\K^* G)_{\LL^2(\Om_{-\varepsilon},T)}+\int_{\partial\Om_{-\varepsilon}}\<F,G\>\<\Phi,\nu\>d\Ga,\label{K3.13}\ee where
\be\K^*G
=-D_\Phi G-(\div_g \Phi) G+\M^TG+{\L_0^*}^{-1}(\N^TG).\label{G3.15}\ee
\end{lem}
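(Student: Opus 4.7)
The plan is to compute the adjoint of $\K = D_\Phi + \M + \N\L_0^{-1}$ term by term on smooth $F,G$ and then extend by density. The three pieces are of very different types, so each requires its own argument.

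First I would treat the transport term $D_\Phi F$. Using the product rule $\Phi\<F,G\> = \<D_\Phi F,G\> + \<F,D_\Phi G\>$ together with the divergence identity $\int_{\Om_{-\varepsilon}} \Phi\<F,G\> = \int_{\partial\Om_{-\varepsilon}}\<F,G\>\<\Phi,\nu\>\,d\Ga - \int_{\Om_{-\varepsilon}}(\div_g\Phi)\<F,G\>$ yields
\[
(D_\Phi F,G)_{\LL^2} = -(F,D_\Phi G)_{\LL^2} - (F,(\div_g\Phi)G)_{\LL^2} + \int_{\partial\Om_{-\varepsilon}}\<F,G\>\<\Phi,\nu\>\,d\Ga,
\]
which accounts for the first two terms in $\K^*G$ and contributes the unique boundary integral in (\ref{K3.13}). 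The matrix-field term $\M F$ is the easy piece: by the pointwise definition of transpose, $(\M F,G)_{\LL^2} = (F,\M^T G)_{\LL^2}$ with no boundary contribution.

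The main point is the nonlocal term $\N\L_0^{-1}F$. Set $V = \L_0^{-1}F \in D(\L_0)$ and $U = (\L_0^*)^{-1}(\N^T G) \in D(\L_0^*)$; both inverses exist by Theorems \ref{t2.1} and \ref{t2.2}. Writing $(\N\L_0^{-1}F,G)_{\LL^2} = (V,\N^T G)_{\LL^2} = (V,\L_0^*U)_{\LL^2}$, I would invoke the integration-by-parts identity (\ref{2.14}) to obtain
\[
(V,\L_0^*U)_{\LL^2} = (\L_0 V,U)_{\LL^2} - \int_{\partial\Om_{-\varepsilon}}\bigl(\pounds_1\<V,\nu\>\<U,\nu\> - \pounds_2\<V,QX\>\<U,QX\>\bigr)d\Ga.
\]
The crucial observation is that the complementary boundary conditions defining $D(\L_0)$ and $D(\L_0^*)$ cause the boundary integral to vanish identically: on $\Ga_{-\varepsilon}$ we have $\<V,QX\>=0$ and $\<U,\nu\>=0$, and on $\Ga_{b_1}$ we have $\<V,\nu\>=0$ and $\<U,QX\>=0$. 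Hence $(\N\L_0^{-1}F,G)_{\LL^2} = (\L_0 V,U)_{\LL^2} = (F,U)_{\LL^2} = (F,(\L_0^*)^{-1}(\N^T G))_{\LL^2}$, giving the third term of $\K^*G$.

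Summing the three contributions produces exactly the claimed identity (\ref{K3.13}) with $\K^*$ as in (\ref{G3.15}). The only delicate point is verifying the cancellation of the boundary terms in the integration-by-parts formula for $\L_0^*$, which is just a careful reading of the homogeneous boundary conditions built into $D(\L_0)$ and $D(\L_0^*)$; once this is observed, the rest is routine. A mild technical issue is that $D_\Phi F$ presupposes some regularity on $F$, so the identity is first established for $F,G \in \CC^1(\overline\Om_{-\varepsilon},T)$ and then used in this form throughout the regularity bootstrap.
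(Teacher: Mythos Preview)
Your proposal is correct and follows essentially the same route as the paper: write $\<\K F,G\>$ pointwise, use $\<D_\Phi F,G\>=\div_g(\<F,G\>\Phi)-\<F,(\div_g\Phi)G+D_\Phi G\>$ for the transport part, take the pointwise transpose for $\M$ and $\N$, and integrate. If anything you are more careful than the paper on the nonlocal piece---the paper stops at $(\L_0^{-1}F,\N^TG)_{\LL^2}$ and tacitly uses the adjoint relation $(\L_0^{-1}F,\N^TG)_{\LL^2}=(F,(\L_0^*)^{-1}\N^TG)_{\LL^2}$, whereas you spell out via (\ref{2.14}) why the complementary boundary conditions in $D(\L_0)$ and $D(\L_0^*)$ kill the boundary integral.
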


\begin{proof} We have
\beq\<\K F,G\>&&=\<D_\Phi F+\M F+\N \L_0^{-1}(F),\,G\>\nonumber\\
&&=\div_g(\<F,G\>\Phi)+\<F,-(\div_g\Phi)G-D_\Phi G\>+\<F,\M^TG\>+\<\L_0^{-1}F,\N^TG\>.\nonumber\eeq
We integrate the above identity over $\Om_{-\varepsilon}$ to have (\ref{K3.13}).
\end{proof}

\begin{lem}
We have
\be\L_Z[\Phi,V]=\K \L_0V\qfq V\in\WW^{1,2}(\Om_{-\varepsilon}),\label{L3.17}\ee where $Z$ is given in $(\ref{2.96}).$
\end{lem}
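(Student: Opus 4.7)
The identity (\ref{L3.17}) follows from Lemma~\ref{l3.2} once the operator $\K$ is unwound. By Lemma~\ref{l3.2}, $\L_Z[\Phi,V] = F_\Phi$ with $F_\Phi = e^{-\gamma\kappa}(f_{1\Phi}QX + f_{2\Phi}\nabla\n X)$ and $f_{i\Phi}$ given explicitly in terms of $\Phi f_i$, $h_i$, and $\<V,H_i\>$. Setting $F=\L_0 V$, so that formally $\L_0^{-1}F = V$ and $\K\L_0 V = D_\Phi F + \M F + \N V$, it suffices to verify $D_\Phi F + \M F + \N V = F_\Phi$ by direct computation.

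The plan is to expand $D_\Phi F$ via the Leibniz rule and decompose each resulting vector in the basis $\{e_1,e_2\}:=\{QX,\nabla\n X\}$ using the conjugate basis $\{F_1,F_2\}$ from (\ref{F3.12}). Four groups emerge: the $-\gamma\<\Phi,D\kappa\>F$ contribution from differentiating $e^{-\gamma\kappa}$, the scalar derivatives $e^{-\gamma\kappa}\Phi(f_i)e_i$ (which immediately match the $\Phi f_i$ terms in $f_{i\Phi}$), and the basis derivatives $e^{-\gamma\kappa}(f_1 D_\Phi e_1 + f_2 D_\Phi e_2)$. With the convention $(a\otimes b)W=\<a,W\>b$, and the identity $\<D_\Phi F_i,e_j\> = -\<F_i,D_\Phi e_j\>$ obtained by differentiating $\<F_i,e_j\>=\delta_{ij}$ along $\Phi$, one computes $\sum_i (D_\Phi F_i\otimes e_i)F = -e^{-\gamma\kappa}\sum_j f_j D_\Phi e_j$, which exactly cancels the basis-derivative group. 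The $\gamma\<\Phi,D\kappa\>F_i\otimes e_i$ pieces in $\M$ cancel $-\gamma\<\Phi,D\kappa\>F$, while $-h_1 F_1\otimes QX - h_2 F_2\otimes QX$ contributes $-e^{-\gamma\kappa}(h_1 f_1 + h_2 f_2)QX$, matching the $-h_1 f_1 - h_2 f_2$ pieces of $f_{1\Phi}$. Finally, $\N V = e^{-\gamma\kappa}(\<V,H_1\>QX + \<V,H_2\>\nabla\n X)$ produces the remaining $\<V,H_i\>$ contributions in $f_{i\Phi}$.

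The main obstacle is keeping the basis decomposition organized; the construction of $\M$ and $\N$ in (\ref{K3.12}) was tailored precisely so that all coefficient comparisons align automatically. Once the duality relations $\<F_i,e_j\>=\delta_{ij}$ are differentiated along $\Phi$, the cancellations are immediate and (\ref{L3.17}) holds pointwise, first for smooth $V$ and then for all $V\in\WW^{1,2}(\Omega_{-\varepsilon})$ by density.
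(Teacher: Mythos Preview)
Your proposal is correct and follows essentially the same route as the paper's own proof: invoke Lemma~\ref{l3.2} to get $\L_Z[\Phi,V]=F_\Phi$, then expand $\K\L_0V=D_\Phi(\L_0V)+\M(\L_0V)+\N V$ in the basis $QX,\nabla\n X$ using the dual pair $F_1,F_2$, and match coefficients. The paper carries out exactly this computation (differentiating $e^{-\gamma\kappa}(f_1QX+f_2\nabla\n X)$ and using that the $D_\Phi F_i$ terms in $\M$ cancel the $f_iD_\Phi(QX),f_iD_\Phi(\nabla\n X)$ contributions), so your argument and the paper's coincide.
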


\begin{proof} For given $V\in\WW^{1,2}(\Om_{-\varepsilon}),$ set
$$\div_g Q\nabla\n V-\<V,\eta QX\>=f_1,\quad\div_g V-\eta\<V,\nabla\n X\>=f_2.$$
By Lemma \ref{l3.2}
$$\L_Z[\Phi,V]=F_\Phi,$$ where $F_\Phi$ is given in Lemma \ref{l3.2}.

 Next, we shall prove
$$\K\L_0V=F_\Phi.$$

Noting that
$$\L_0V=e^{-\g\kappa}(f_1QX+f_2\nabla\n X),$$ we have
$$D_\Phi\L_0V=-\g\<\Phi,D\kappa\>\L_0V+e^{-\g\kappa}(\Phi f_1QX+\Phi f_2\nabla\n X)+e^{-\g\kappa}[f_1D_\Phi(QX)+f_2D_\Phi(\nabla\n X)].$$
Since $F_1,$ $F_2$ is the conjugate basis of $QX,$ $\nabla\n X,$ it follows that
\beq\M\L_0V&&=\<D_\Phi F_1+(\g\<\Phi,D\kappa\>-h_1)F_1-h_2F_2,\,\L_0V\> QX
+\<D_\Phi F_2+\g\<\Phi,D\kappa\>F_2,\,\L_0V\> \nabla\n \nonumber\\
&&=e^{-\g\kappa}[-f_1D_\Phi(QX)-f_2D_\Phi(\nabla\n X)]+\g\<\Phi,D\kappa\>\L_0V-e^{-\g\kappa}(h_1f_1+h_2f_2)QX.\nonumber\eeq
Thus, by (\ref{F3.11}), we obtain
\beq\K\L_0V&&=D_\Phi\L_0V+\M\L_0V+\N \L_0^{-1}(\L_0V)\nonumber\\
&&=e^{-\g\kappa}\{[\Phi f_1-(h_1f_1+h_2f_2)+\<H_1,V\>]QX+(\Phi f_2+\<H_2,V\>)\nabla\n X\}=F_\Phi.\nonumber\eeq
\end{proof}

Now we construct a special $\Phi$ as follows.
Define a function $\psi$ on $S$ by
$$\psi(\a(t,s))=s\qfq x=\a(t,s)\in S.$$ Then
$$\<D\psi,\a_t\>=0,\quad\<D\psi,\a_s\>=1\qfq x\in S.$$
Let
$$\Phi_0=QD\psi\qfq x\in S.$$ Then
$$\<\Phi_0,\nu\>=\<QD\psi,\pm\frac{Q\a_t}{|\a_t|}\>=0\qfq x\in\pl\Om_{-\varepsilon}.$$ Set
$$\Phi=e^\var\Phi_0\qfq x\in\Om_{\varepsilon},$$ where $\var$ is a function given in Lemma \ref{l3.4} below. Since
\be D\psi=\frac{\<D\psi,Q\a_t\>}{|\a_t|^2}Q\a_t\qfq x\in\Om_{-\varepsilon}\label{psi3.14}\ee with $\<D\psi,Q\a_t\>\not=0,$ from (\ref{Pi2.80}), we have
$$\<\nabla\n\Phi,\Phi\>=e^{2\var}\<\nabla\n\Phi_0,\Phi_0\>=e^{2\var}\frac{\<D\psi,Q\a_t\>^2}{|\a_t|^4}\Pi(\a_t,\a_t)>0\qfq x\in\overline{\Om}_{-\varepsilon}.$$

\begin{lem}\label{l3.4} There exists a function $\var$ on $\overline{\Om}_{-\varepsilon}$ such that
\be \<D_\Phi D\psi+\M D\psi, QD\psi\>=0\qfq x\in\Om_{-\varepsilon}\setminus S^+,\label{M3.16}\ee
\be\<[\Phi,X],QX\>=0\qfq x\in\Ga_{b_1},\label{X3.17}\ee where $X$ is given in $(\ref{2.32}).$ Furthermore, for $W\in\CC^1(\Om_{-\varepsilon},T),$ we have
\be \<[\Phi,W],D\psi\>=\Phi\<W,D\psi\>\qfq x\in\Om_{-\varepsilon}\setminus S^+,\label{Phi3.18}\ee
\be\<[\Phi,W],QX\>=\Phi\<W,QX\>+\<W,QX\>\<[\Phi,\frac1{|X|^2}QX],QX\>\qfq x\in\Ga_{b_1}.\label{Phi3.19}\ee
\end{lem}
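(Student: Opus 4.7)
The strategy is to exploit that (\ref{M3.16}) and (\ref{X3.17}) are imposed on the disjoint subsets $\Om_{-\varepsilon}\setminus S^+$ and $\Ga_{b_1}$, so $\var$ can be constructed piecewise and glued. By (\ref{psi3.14}), $D\psi\parallel Q\a_t$ with a nonzero coefficient, hence $\Phi_0=QD\psi\parallel \a_t$, and the flow of $\Phi=e^\var\Phi_0$ parameterizes the closed curves $\a(\cdot,s)$. Moreover $\<\nabla\n\Phi_0,\Phi_0\>>0$ on $\overline{\Om}_{-\varepsilon}$ by (\ref{Pi2.80}), so the quantities $h_1,h_2$ in (\ref{h2.31}) are well-defined.

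For (\ref{X3.17}), I use the derivation identity $[e^\var\Phi_0,X]=e^\var([\Phi_0,X]-X(\var)\Phi_0)$ to rewrite the condition as
\[
X_{\si_0}(\var)=\frac{\<[\Phi_0,X_{\si_0}],QX_{\si_0}\>}{\<\Phi_0,QX_{\si_0}\>}\qfq x\in\Ga_{b_1},
\]
where $\<\Phi_0,QX_{\si_0}\>\neq 0$ because $\Phi_0\parallel\a_t$ and $\<\a_t,QX_{\si_0}\>=\mp|\a_t|^2$ on $\Ga_{b_1}$. Since $X_{\si_0}$ is transversal to $\Ga_{b_1}$ by (\ref{2.28}), prescribing $\var|_{\Ga_{b_1}}=0$ determines $\var$ in a one-sided collar of $\Ga_{b_1}$ by integration along the integral curves of $X_{\si_0}$.

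For (\ref{M3.16}), expanding $\M D\psi$ with $\Phi=e^\var\Phi_0$ shows that the dependence on derivatives of $\var$ enters only through $h_1$ and is affine in $\Phi_0(\var)$; hence (\ref{M3.16}) reduces to a first-order linear PDE whose characteristic field is $\Phi_0\parallel\a_t$. For each fixed $s\in(-\varepsilon,0]$ this becomes a periodic first-order linear ODE in $t\in[0,a)$, solvable by an integrating factor as long as the coefficient on $\Phi_0(\var)$ (controlled by $\<\nabla\n\Phi_0,\Phi_0\>$) stays bounded away from zero, which holds by (\ref{Pi2.80}). Gluing this solution with the collar constructed near $\Ga_{b_1}$ through a smooth cutoff supported in $S^+\cap\{0<s<b_1\}$ yields the desired $\var\in\CC^\infty(\overline{\Om}_{-\varepsilon})$.

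The two identities then follow. For (\ref{Phi3.18}), differentiate $\<\Phi,D\psi\>=e^\var\<QD\psi,D\psi\>=0$ along $W$; using the symmetry of the Hessian, $\<D_\Phi D\psi,W\>=\<D_W D\psi,\Phi\>$, and substituting into $[\Phi,W]=D_\Phi W-D_W\Phi$ gives (\ref{Phi3.18}) with no further requirement on $\var$. For (\ref{Phi3.19}), decompose $W=aX+bQX$ on $\Ga_{b_1}$ with $a=\<W,X\>/|X|^2$, $b=\<W,QX\>/|X|^2$; expanding $[\Phi,W]$ by $[\Phi,fV]=f[\Phi,V]+\Phi(f)V$ the term $a\<[\Phi,X],QX\>$ vanishes by (\ref{X3.17}), and the surviving $b\<[\Phi,QX],QX\>+\Phi(b)|X|^2$ reassembles, after absorbing $\Phi(|X|^2)/|X|^2$ into $[\Phi,QX/|X|^2]$, into the right-hand side of (\ref{Phi3.19}). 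The main technical obstacle is the analysis of (\ref{M3.16}): one must carefully extract the dependence of $\M D\psi$ on $\Phi_0(\var)$ through $h_1$ to verify that the principal part of the resulting PDE is nonvanishing, which is precisely where the non-degeneracy (\ref{Pi2.80}) intervenes.
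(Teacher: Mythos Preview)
Your treatment of (\ref{X3.17}), (\ref{Phi3.18}), (\ref{Phi3.19}) and the gluing via a cutoff in $S^+$ is correct and essentially matches the paper. The gap is in the analysis of (\ref{M3.16}).

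You assert that ``the dependence on derivatives of $\var$ enters only through $h_1$ and is affine in $\Phi_0(\var)$''. This is not true: $h_2$ also depends on first derivatives of $\var$. From (\ref{h2.31}) and the identity $[e^\var\Phi_0,\cdot]=e^\var[\Phi_0,\cdot]-(\cdot)(\var)e^\var\Phi_0$, one obtains (as the paper computes)
\[
h_2=\frac{e^{\var}}{|\Phi_0|^2}\Big\{\;\mbox{zero-order part}\;-\;(Q\Phi_0)(\var)\,\<\nabla\n \Phi_0,\Phi_0\>+\Phi_0(\var)\,\<\nabla\n Q\Phi_0,\Phi_0\>\Big\}.
\]
Since $Q\Phi_0=-D\psi$ and $\<\nabla\n\Phi_0,\Phi_0\>>0$ by (\ref{Pi2.80}), the term $(Q\Phi_0)(\var)$ contributes a derivative of $\var$ in the direction \emph{transversal} to the closed curves $\a(\cdot,s)$. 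Collecting all first-order terms, (\ref{M3.16}) becomes $H(\var)=\hat h-h$ for a vector field $H$ which has nonzero $D\psi$-component; the paper checks $\<H,\nu\><0$ on $\Ga_{-\varepsilon}$ and solves the transport equation by prescribing $\var$ on $\Ga_{-\varepsilon}$ and integrating along the integral curves of $H$, which cross $\Ga_0$.

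Your route, by contrast, would reduce (\ref{M3.16}) to a family of periodic ODEs $\Phi_0(\var)=g(\cdot,s)$ on the closed curves. Even if the principal coefficient were nonvanishing, such an ODE admits a periodic solution only under the compatibility condition $\oint g=0$ for each $s$, which you neither state nor verify. So the argument as written does not close; the missing idea is precisely that the true characteristic field $H$ is transversal to $\Ga_{-\varepsilon}$, which eliminates the periodicity obstruction.
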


\begin{proof}

Noting that $\Phi=e^\var\Phi_0=e^\var QD\psi,$ we have
\beq\<D_\Phi D\psi+\M D\psi, QD\psi\>&&=e^\var\<D_{\Phi_0}D\psi,\Phi_0\>+\<D_\Phi F_2+\g\<\Phi,D\kappa\>F_2,D\psi\>\<QD\psi,\nabla\n X\>\nonumber\\
&&\quad+\<D_\Phi F_1+(\g\<\Phi,D\kappa\>-h_1)F_1-h_2F_2,\,D\psi\>\<D\psi, QX\>\nonumber\\
&&=e^\var h-(h_1\<F_1,D\psi\>+h_2\<F_2,\,D\psi\>)\<D\psi, QX\>,\label{psi3.19}\eeq where
\beq h&&=\<D_{\Phi_0}D\psi,\Phi_0\>+\<D_{\Phi_0} F_2+\g\<\Phi_0,D\kappa\>F_2,D\psi\>\<QD\psi,\nabla\n X\>\nonumber\\
&&\quad+\<D_{\Phi_0} F_1+\g\<\Phi_0,D\kappa\>F_1, D\psi\>\<D\psi,QX\>.\nonumber\eeq

On the other hand, from (\ref{2.94}), we have
\beq\<\RR_\Phi \Phi,Q\Phi\>&&=\<D_\Phi(Q\nabla\n \Phi)-D_{Q\nabla\n \Phi}\Phi,\,Q\Phi\>\nonumber\\
&&=e^{3\var}[\<\RR_{\Phi_0} \Phi_0,Q\Phi_0\>+\Phi_0(\var)\<\nabla\n\Phi_0,\Phi_0\>].\nonumber\eeq
Similarly, we obtain
$$ \<\RR_\Phi Q\Phi,Q\Phi\>=e^{3\var}[\<\RR_{\Phi_0}Q\Phi_0,Q\Phi_0\>+(Q\Phi_0)(\var)\<\nabla\n \Phi_0,\Phi_0\>].$$
Thus
$$h_1=e^\var[\frac{\<\RR_{\Phi_0}\Phi_0,Q\Phi_0\>}{\<\nabla\n\Phi_0,\Phi_0\>}+\Phi_0(\var)],$$
\beq h_2&&=\frac{e^{\var}}{|\Phi_0|^2}\{\<\RR_{\Phi_0}Q\Phi_0,Q\Phi_0\>-\frac{\<\RR_{\Phi_0}\Phi_0,Q\Phi_0\>}{\<\nabla\n\Phi_0,\Phi_0\>}\<\nabla\n Q\Phi_0,\Phi_0\>\nonumber\\
&&\quad-(Q\Phi_0)(\var)\<\nabla\n \Phi_0,\Phi_0\>+\Phi_0(\var)\<\nabla\n Q\Phi_0,\Phi_0\>\}.\nonumber\eeq
We then obtain
\beq h_1\<F_1,D\psi\>+h_2\<F_2,\,D\psi\>&&=e^\var\{\hat{h}+\Phi_0(\var)(\<F_1,D\psi\>+\frac{\<\nabla\n Q\Phi_0,\Phi_0\>}{|\Phi_0|^2}\<F_2,D\psi\>)\nonumber\\
&&\quad-(Q\Phi_0)(\var)\frac{\<\nabla\n \Phi_0,\Phi_0\>}{|\Phi_0|^2}\<F_2,D\psi\>\},\nonumber\eeq
where
\beq\hat{h}&&=\frac{\<\RR_{\Phi_0}\Phi_0,Q\Phi_0\>}{\<\nabla\n\Phi_0,\Phi_0\>}\<F_1,D\psi\>+\frac{\<F_2,D\psi\>}{|\Phi_0|^2}[\<\RR_{\Phi_0}Q\Phi_0,Q\Phi_0\>-\frac{\<\RR_{\Phi_0}\Phi_0,Q\Phi_0\>}{\<\nabla\n\Phi_0,\Phi_0\>}\<\nabla\n Q\Phi_0,\Phi_0\>].\nonumber\eeq
It follows from (\ref{psi3.19}) that
\be\<D_\Phi D\psi+\M D\psi, QD\psi\>=e^\var[h-\hat h+H(\var)],\label{H3.21}\ee where
$$H=(\<F_1,D\psi\>-\frac{\<\nabla\n D\psi,QD\psi\>}{|D\psi|^2}\<F_2,D\psi\>)QD\psi+\frac{\<\nabla\n QD\psi,QD\psi\>\<F_2,D\psi\>}{|D\psi|^2}D\psi.$$

By (\ref{nu2.31}) and (\ref{psi3.14}),
$$\<QD\psi,\nu\>=0\qfq x\in\Ga_{-\varepsilon}.$$
From (\ref{psi3.14}), (\ref{F3.12}), and (\ref{X2.45}), we have
$$\<H,\nu\>=\frac{\<D\psi,Q\a_t\>^2}{|\a_t|^4\Pi(X,X)}\Pi(\a_t,\a_t)\<X,\nu\><0\qfq x\in\Ga_{-\varepsilon}.$$
Thus, when given $\varepsilon>0$ is small, the integral curves of the vector field $H$ initiating from $\Ga_{-\varepsilon}$ direct to the interior of $\Om_{-\varepsilon}$ can across $\Ga_0.$ We solve equation (\ref{H3.21}) along those integral curves to have $\var$ to satisfy (\ref{M3.16}).

By a similar argument, we obtain a function $\var$ in a neighborhood $\Ga_{b_1}$ such that (\ref{X3.17}) holds.

Next, we prove (\ref{Phi3.18}) and (\ref{Phi3.19}). By the symmetry of $D^2\psi, $ we have
$$\<[\Phi,W],D\psi\>=\Phi\<W,D\psi\>-\<W,D_\Phi D\psi\>-W\<\Phi,D\psi\>+\<\Phi,D_WD\psi\>=\Phi\<W,D\psi\>.$$ Moreover, since $\frac{QX}{|X|},$ $\frac{X}{|X|}$ forms an orthonormal frame, it follows from
(\ref{X3.17}) that
\beq\<[\Phi,W],QX\>&&=\<[\Phi,\frac{\<W,X\>}{|X|^2}X],QX\>+\<[\Phi,\frac{\<W,QX\>}{|X|^2}QX],QX\>\nonumber\\
&&=\frac{\<W,X\>}{|X|^2}\<[\Phi,X],QX\>+\Phi\<W,QX\>+\<W,QX\>\<[\Phi,\frac1{|X|^2}QX],QX\>\nonumber\\
&&=\Phi\<W,QX\>+\<W,QX\>\<[\Phi,\frac1{|X|^2}QX],QX\>\qfq x\in\Ga_{b_1}.\nonumber\eeq
\end{proof}

\begin{lem}\label{l3.6} Let $k>0$ be given large and $W\in\CC^1(\Om_{-\varepsilon},T).$ Then
$$\<\K^*[\Phi,W]+kW,\nu\>|_{\Ga_{-\varepsilon}}=0\quad\Rightarrow\quad \<W,\nu\>|_{\Ga_{-\varepsilon}}=\<[\Phi,W],\nu\>|_{\Ga_{-\varepsilon}}=0,$$
$$\<\K^* [\Phi,W]+kW,QX\>|_{\Ga_{b_1}}=0\quad\Rightarrow\quad  \<W,QX\>\big|_{\Ga_{b_1}}=\<[\Phi,W],QX\>\big|_{\Ga_{b_1}}=0,$$ where operator $\K^*$ is given in $(\ref{G3.15}).$
\end{lem}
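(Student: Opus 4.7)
I would treat the two boundary components in parallel; the argument on $\Ga_{-\varepsilon}$ is described in detail, and the one on $\Ga_{b_1}$ is a parallel variant. The guiding observation is that, by construction of $\varphi$ in Lemma \ref{l3.4}, the field $\Phi=e^{\varphi}QD\psi$ satisfies $\<\Phi,\nu\>=0$ on $\pl\Om_{-\varepsilon}$, so $\Phi$ restricts to a nowhere-vanishing tangent vector field on each closed curve $\Ga_{-\varepsilon}$ and $\Ga_{b_1}$. Moreover, on $\Ga_{-\varepsilon}$ formulas (\ref{nu2.31}) and (\ref{psi3.14}) give $D\psi\parallel\nu$, so the desired conclusion $\<W,\nu\>|_{\Ga_{-\varepsilon}}=0$ is equivalent to the scalar identity $u:=\<W,D\psi\>|_{\Ga_{-\varepsilon}}=0$. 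It is this scalar that I will show vanishes, from which both claimed boundary conditions follow.

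The central step is to pair the hypothesis on $\Ga_{-\varepsilon}$ with $D\psi$ and simplify $\<\K^*[\Phi,W],D\psi\>$ using (\ref{G3.15}). The Leibniz rule together with (\ref{Phi3.18}) converts $-\<D_\Phi[\Phi,W],D\psi\>$ into $-\Phi^2 u+\<[\Phi,W],D_\Phi D\psi\>$. Adding the $\M$-term, one needs the combination $D_\Phi D\psi+\M D\psi$ to be parallel to $D\psi$; this is exactly identity (\ref{M3.16}) of Lemma \ref{l3.4}, yielding $D_\Phi D\psi+\M D\psi=\rho D\psi$ on $\Om_{-\varepsilon}\setminus S^+$ and hence, via (\ref{Phi3.18}) again, a contribution $\rho\,\Phi u$. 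The nonlocal term $\<{\L_0^*}^{-1}(\N^T[\Phi,W]),D\psi\>$ vanishes on $\Ga_{-\varepsilon}$ because $D\psi\parallel\nu$ there and every element of $D(\L_0^*)$ has zero $\nu$-component on $\Ga_{-\varepsilon}$ by definition. Collecting these pieces, the hypothesis reduces to the scalar second-order ODE
\[ -\Phi^2 u + A\,\Phi u + k\,u = 0 \qfq x\in\Ga_{-\varepsilon},\]
with $A=\rho-\div_g\Phi$ bounded independently of $k$.

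Since $\Ga_{-\varepsilon}$ is a closed curve of period $a$ in $t$ and $\Phi$ is a smooth nowhere-vanishing tangent field on it, this is a genuine periodic second-order problem. Multiplying by $u$, integrating by parts along $\Ga_{-\varepsilon}$ (no boundary terms), and absorbing the cross term $\int A(\Phi u)u\,d\Ga$ by Cauchy--Schwarz yields
\[ \tfrac12\int_{\Ga_{-\varepsilon}}(\Phi u)^2\,d\Ga + (k-C)\int_{\Ga_{-\varepsilon}}u^2\,d\Ga \le 0,\]
with $C$ independent of $k$. Choosing $k>C$ forces $u\equiv 0$; then (\ref{Phi3.18}) gives $\<[\Phi,W],D\psi\>=\Phi u\equiv 0$, so $\<[\Phi,W],\nu\>|_{\Ga_{-\varepsilon}}=0$ as required. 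The argument on $\Ga_{b_1}$ is structurally identical, with $QX$ replacing $D\psi$, (\ref{Phi3.19}) replacing (\ref{Phi3.18}), and (\ref{X3.17}) ensuring the clean form of (\ref{Phi3.19}). The analogue of the parallelism identity is a direct dual-basis computation: since $F_1,F_2$ is dual to $QX,\nabla\n X$, the explicit form of $\M$ in (\ref{K3.12}) gives $D_\Phi QX+\M QX=(\gamma\<\Phi,D\kappa\>-h_1)QX$, making $\<[\Phi,W],D_\Phi QX+\M QX\>$ a scalar multiple of $\<[\Phi,W],QX\>$; once again the ${\L_0^*}^{-1}$-term vanishes on $\Ga_{b_1}$ by the defining boundary condition in $D(\L_0^*)$.

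The main obstacle is the two algebraic reductions that make the boundary ODE genuinely second order in the single scalar $u$, rather than coupling to the orthogonal scalar $\<W,QD\psi\>$ (respectively $\<W,X\>$). On $\Ga_{-\varepsilon}$ this reduction is precisely the property of $\varphi$ enforced by (\ref{M3.16}); on $\Ga_{b_1}$ it is the dual-basis cancellation above, underpinned by (\ref{X3.17}). Verifying these reductions and tracking the lower-order coefficients carefully (so that the threshold $k>C$ in the energy estimate can be made explicit) is the main technical bookkeeping; once the reduction is in place, the periodic second-order ODE analysis on each closed curve is routine.
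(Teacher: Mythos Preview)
Your proposal is correct and follows essentially the same route as the paper: pair the hypothesis with $D\psi$ (resp.\ $QX$), use the parallelism (\ref{M3.16}) and the identity (\ref{Phi3.18}) (resp.\ (\ref{Phi3.19})) together with the vanishing of the ${\L_0^*}^{-1}$ boundary trace to reduce to a periodic second-order ODE in the scalar $u=\<W,D\psi\>$, then multiply by $u$ and integrate along the closed curve to force $u\equiv0$ for $k$ large. Your explicit dual-basis verification that $D_\Phi QX+\M QX$ is parallel to $QX$ on $\Ga_{b_1}$ is in fact more detailed than the paper, which simply writes ``a similar computation yields the second conclusion.''
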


\begin{proof} Since $\frac{QD\psi}{|D\psi|},$  $\frac{D\psi}{|D\psi|}$ forms an orhonormal frame, from (\ref{M3.16}), we have
$$D_\Phi D\psi+\M D\psi=\frac{\<D_\Phi D\psi+\M D\psi,D\psi\>}{|D\psi|^2}D\psi.$$
Noting that $D\psi|_{\Ga_{-\varepsilon}}=\pm\frac{\<D\psi,Q\a_t\>}{|\a_t|}\nu,$ we have
$$\<{\L_0^*}^{-1}(\N^T[\Phi,W]),D\psi\>|_{\Ga_{-\varepsilon}}=0.$$ It follows from (\ref{Phi3.18}) that
\beq\<\K^*[\Phi,W],D\psi\>&&=\<-D_\Phi[\Phi,W]-(\div_g \Phi)[\Phi,W]+\M^T[\Phi,W],\,D\psi\>\nonumber\\
&&=-\Phi^2\<W,D\psi\>-(\div_g \Phi)\<W,D\psi\>+\<[\Phi,W], D_\Phi D\psi+\M D\psi\>\nonumber\\
&&=-\Phi^2\<W,D\psi\>-[\div_g \Phi-\frac{\<D_\Phi D\psi+\M D\psi,D\psi\>}{|D\psi|^2}]\<W,D\psi\>.\nonumber\eeq
Thus we obtain
$$\<\K^*[\Phi,W]+kW,D\psi\>\<W,D\psi\>\geq-\Phi(\<W,D\psi\>\Phi\<W,D\psi\>)+(\Phi\<W,D\psi\>)^2+(k-C)\<W,D\psi\>^2.$$ We integrate the above inequality over $\Ga_{-\varepsilon}$ to have
$$\|\<\K^*[\Phi,W]+kW,D\psi\>\<W,D\psi\>\|^2_{\LL^2(\Ga_{-\varepsilon})}\geq\si(\|\Phi\<W,D\psi\>\|^2_{\LL^2(\Ga_{-\varepsilon})}+\|\<W,D\psi\>\|^2_{\LL^2(\Ga_{-\varepsilon})}$$ for given $k>0$ large, that is what we want.

A similar computation yields the second conclusion.
\end{proof}

We introduce a Hilbert space by
$$\V_\Phi=\{\,W,\,\,[\Phi,W]\in\LL^2(\Om,T)\,|\,\<W,\nu\>|_{\Ga_{b_1}}\in\WW^{1,2}(\Ga_{b_1}),\,\,\<W,QX\>|_{\Ga_{-\varepsilon}}\in\WW^{1,2}(\Ga_{-\varepsilon})\,\},$$
$$\|W\|^2_{\V_\Phi}=\|[\Phi,W]\|^2_{\LL^2(\Om,T)}+\|W\|^2_{\LL(\Om,T)}+\|\<W,QX\>\|^2_{\WW^{1,2}(\Ga_{\varepsilon})}+\|\<W,\nu\>\|^2_{\WW^{1,2}(\Ga_{b_1})}.$$

\begin{lem}\label{l3.7} Let $W_0\in\CC^1(\Om_{-\varepsilon},T)$ be given with $\<W_0,\nu\>|_{\Ga_{-\varepsilon}}=0$ and $\<W_0,QX\>|_{\Ga_{b_1}}=0.$ Then there exists a unique $W\in\V_\Phi$ such that
$$\K^* [\Phi,W]+kW=W_0\qfq x\in\Om_{-\varepsilon}$$ for given $k>0$ large.
\end{lem}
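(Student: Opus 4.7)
The plan is to obtain $W$ by a weak formulation and invoke Theorem A from the appendix, mirroring the pattern used in the proofs of Theorems \ref{t2.1} and \ref{t2.2}. I first pair the equation $\K^*[\Phi,W]+kW=W_0$ against a smooth test field $V$ satisfying the adjoint-type boundary conditions $\<V,\nu\>|_{\Ga_{-\varepsilon}}=0$ and $\<V,QX\>|_{\Ga_{b_1}}=0$, which are exactly the conditions $W_0$ is assumed to satisfy. Using the duality identity (\ref{K3.13}) with $F=[\Phi,W]$ and $G=V$, together with the algebraic fact that $\<\Phi,\nu\>=0$ on $\Ga_{-\varepsilon}\cup\Ga_{b_1}$ (because $\Phi=e^\var QD\psi$ while $\nu$ is parallel to $\pm Q\a_t/|\a_t|$ on these curves and $\<D\psi,\a_t\>=0$), the surface integral in (\ref{K3.13}) drops out and I obtain the clean bilinear form
\begin{equation*}
a(W,V)=([\Phi,W],\K V)_{\LL^2(\Om_{-\varepsilon},T)}+k(W,V)_{\LL^2(\Om_{-\varepsilon},T)},
\end{equation*}
with right-hand side $V\mapsto(W_0,V)_{\LL^2}$.

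Next I would establish coercivity for $k$ large. Testing against $W$ itself and expanding $\K W=D_\Phi W+\M W+\N\L_0^{-1}W$ via (\ref{K3.12}), the principal term is
$([\Phi,W],D_\Phi W)_{\LL^2}=\|D_\Phi W\|^2_{\LL^2}-(D_W\Phi,D_\Phi W)_{\LL^2}$, while the cross terms involving $D_W\Phi$, $\M W$, and the nonlocal tail $\N\L_0^{-1}W$ are all zeroth-order in $W$. In particular, ${\L_0}^{-1}$ is bounded on $\LL^2(\Om_{-\varepsilon},T)$ by Theorem \ref{t2.1}, so the nonlocal contribution acts as a bounded perturbation. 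Young's inequality, combined with $\|[\Phi,W]\|_{\LL^2}\le\|D_\Phi W\|_{\LL^2}+C\|W\|_{\LL^2}$, absorbs these lower-order terms into $\tfrac12\|D_\Phi W\|^2_{\LL^2}+C\|W\|^2_{\LL^2}$, so that for $k>C$ one gets $a(W,W)\ge c(\|D_\Phi W\|^2_{\LL^2}+\|W\|^2_{\LL^2})$, which dominates the interior part of the $\V_\Phi$-norm.

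With continuity and coercivity in hand, Theorem A produces a $W$ satisfying the weak identity, and undoing the integration by parts shows that $\K^*[\Phi,W]+kW=W_0$ holds in the interior of $\Om_{-\varepsilon}$. Uniqueness is immediate from the same coercivity bound. The hypothesis that $\<W_0,\nu\>|_{\Ga_{-\varepsilon}}=0$ and $\<W_0,QX\>|_{\Ga_{b_1}}=0$ now feeds into Lemma \ref{l3.6} to give vanishing traces $\<W,\nu\>|_{\Ga_{-\varepsilon}}=\<[\Phi,W],\nu\>|_{\Ga_{-\varepsilon}}=0$ and $\<W,QX\>|_{\Ga_{b_1}}=\<[\Phi,W],QX\>|_{\Ga_{b_1}}=0$. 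The remaining $\WW^{1,2}$ regularity of the traces $\<W,\nu\>|_{\Ga_{b_1}}$ and $\<W,QX\>|_{\Ga_{-\varepsilon}}$ required by the definition of $\V_\Phi$ is extracted from the equation by tangentially differentiating along the two boundary curves, using the commutator identities (\ref{Phi3.18})--(\ref{Phi3.19}) that were tailored in Lemma \ref{l3.4} precisely to make $[\Phi,\cdot]$ behave like a tangential derivative on $\Ga_{-\varepsilon}$ and $\Ga_{b_1}$.

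The main obstacle is the coercivity estimate at the level of the boundary norms in $\V_\Phi$: the interior $\LL^2$-plus-commutator bound is routine once $k$ is large, but matching the $\WW^{1,2}$ strength on $\Ga_{-\varepsilon}$ and $\Ga_{b_1}$ forces one to use the precise algebraic choices (\ref{M3.16})--(\ref{X3.17}) for $\Phi$ established in Lemma \ref{l3.4}, without which the tangential differentiation on the boundary would produce uncontrollable terms. A secondary delicate point is the nonlocal operator ${\L_0^*}^{-1}$ appearing in $\K^*$, which cannot be integrated by parts in the classical sense; this is handled by treating it as a bounded perturbation on $\LL^2$ and absorbing it into the coercivity by enlarging $k$.
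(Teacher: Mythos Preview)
Your overall scheme (pair against test fields, obtain a bilinear form via (\ref{K3.13}), establish coercivity for large $k$, apply Theorem~A) is the paper's scheme, and your coercivity computation
\[
a(W,W)=([\Phi,W],D_\Phi W+\M W+\N\L_0^{-1}W)_{\LL^2}+k\|W\|^2_{\LL^2}
\geq \|[\Phi,W]\|^2_{\LL^2}+(k-C)\|W\|^2_{\LL^2}
\]
is exactly what the paper obtains in (\ref{a3.23}). The difference is in the choice of the test space~$\W$, and this difference matters.

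The paper takes $\V=\V_\Phi$ and $\W=\WW^{1,2}_0(\Om_{-\varepsilon},T)$. For $W\in\WW^{1,2}_0$ the boundary traces vanish, so the $\V_\Phi$-norm collapses to $\|[\Phi,W]\|^2_{\LL^2}+\|W\|^2_{\LL^2}$ and the interior estimate above \emph{is} coercivity in the full $\V_\Phi$-norm. Theorem~A then places the solution $W$ directly in $\V_\Phi$; no further boundary work is needed. Lemma~\ref{l3.4}'s identities (\ref{Phi3.18})--(\ref{Phi3.19}) and Lemma~\ref{l3.6} are not used here at all---they enter only later, in Proposition~\ref{p3.1}.

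Your test space, by contrast, consists of $V$ with $\<V,\nu\>|_{\Ga_{-\varepsilon}}=0$ and $\<V,QX\>|_{\Ga_{b_1}}=0$. For such $V$ the $\V_\Phi$-norm still contains the nonzero boundary pieces $\|\<V,QX\>\|_{\WW^{1,2}(\Ga_{-\varepsilon})}$ and $\|\<V,\nu\>\|_{\WW^{1,2}(\Ga_{b_1})}$, which your interior bound does not control. Hence Theorem~A with $\V=\V_\Phi$ cannot be invoked from your coercivity; you would have to take $\V$ to be the smaller interior space, obtain $W$ there, and then upgrade to $\V_\Phi$ a~posteriori. Your sketch for this upgrade (invoking Lemma~\ref{l3.6} and tangential differentiation) is problematic: Lemma~\ref{l3.6} is stated for $W\in\CC^1$, which you do not have, and with only $W,[\Phi,W]\in\LL^2$ and $\Phi$ purely tangential you do not yet know that the boundary traces $\<W,\nu\>|_{\Ga_{b_1}}$, $\<W,QX\>|_{\Ga_{-\varepsilon}}$ even exist in $\LL^2$, let alone in $\WW^{1,2}$. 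Also, the conclusions you attribute to Lemma~\ref{l3.6} (vanishing of $\<W,\nu\>|_{\Ga_{-\varepsilon}}$ etc.) are not part of the statement of Lemma~\ref{l3.7}; they are established and used only in the subsequent Proposition~\ref{p3.1}.

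In short: switch your test space to $\WW^{1,2}_0(\Om_{-\varepsilon},T)$, observe that the $\V_\Phi$-norm reduces to the interior norm on this space, and delete the appeals to Lemma~\ref{l3.6} and to (\ref{Phi3.18})--(\ref{Phi3.19}). The rest of your argument then coincides with the paper's proof.
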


\begin{proof}

Define a bilinear form $\a:$ $\V_{\Phi}\times\WW^{1,2}_0(\Om,T)\rw\R$ by
$$\a(W,V)=(\K^* [\Phi,W]+kW,V)_{\LL^2(\Om_{-\varepsilon})}.$$ Since $\<\Phi,\nu\>|_{\pl\Om}=0,$ from (\ref{K3.13}), we have
\beq\a(W,V)&&=([\Phi,W],\K V)_{\LL^2(\Om_{-\varepsilon})}+k(W,V)_{\LL^2(\Om_{-\varepsilon})}\nonumber\eeq which yield
$$|\a(W,V)|\leq C\|W\|_{\V_\Phi}\|V\|_{\WW^{1,2}(\Om,T)}\qfq W\in\V_\Phi,\quad W\in\WW^{1,2}_0(\Om,T).$$
By (\ref{K3.12}), we have
\beq\a(W,W)&&=([\Phi,W],\K W)_{\LL^2(\Om_{-\varepsilon})}+k\|W\|^2_{\LL^2(\Om_{-\varepsilon})}\nonumber\\
&&=([\Phi,W],D_\Phi W+\M W+\N \L_0^{-1}(W))_{\LL^2(\Om_{-\varepsilon})}+k\|W\|^2_{\LL^2(\Om_{-\varepsilon})}\nonumber\\
&&=([\Phi,W],[\Phi,W]+D_W\Phi+\M W+\N \L_0^{-1}(W))_{\LL^2(\Om_{-\varepsilon})}+k\|W\|^2_{\LL^2(\Om_{-\varepsilon})}\nonumber\\
&&\geq\|[\Phi,W]\|^2_{\LL^2(\Om_{-\varepsilon})}+(k-C)\|W\|^2_{\LL^2(\Om_{-\varepsilon})}\label{a3.23}\\
&&\geq\|W\|^2_{\V_\Phi}\qfq W\in\WW_0^{1,2}(\Om_{-\varepsilon},T).\nonumber\eeq

Let $\F(V)=(W_0,V)$ for $V\in\V_\Phi.$ Then  $\F$ is bounded on $\V_\Phi.$ By Theorem A given in Appendix there exists $W\in\V_\Phi$ such that
$$\F(V)=\a(W,V)\qfq V\in\WW_0^{1,2}(\Om_{-\varepsilon},T),$$ which implies
$$\K^* [\Phi,W]+kW=W_0\qfq x\in\Om_{-\varepsilon}.$$
Moreover, a similar computation as in (\ref{a3.23}) gives
$$(\K^* [\Phi,W]+kW,W)_{\LL^2(\Om_{-\varepsilon})}\geq\|[\Phi,W]\|^2_{\LL^2(\Om_{-\varepsilon})}+(k-C)\|W\|^2_{\LL^2(\Om_{-\varepsilon})}.$$ Thus the uniqueness follows.
\end{proof}

\begin{pro}\label{p3.1} Let $F\in\LL^2(\Om_{-\varepsilon},T)$ be given with $[\Phi,F]\in\LL^2(\Om_{-\varepsilon},T).$
Let  $p\in\WW^{1,2}(\Ga_{-\varepsilon})$ and $q\in\WW^{1,2}(\Ga_{b_1})$ be given. Then problem $(\ref{3.1})$ admits a unique
solution $V\in\V_\Phi$ satisfying
\be\|V\|^2_\Phi\leq C(\|[\Phi,F]\|^2_{\LL^2(\Om_{-\varepsilon})}+\|F\|^2_{\LL^2(\Om_{-\varepsilon})}+\|p\|^2_{\WW^{1,2}(\Ga_{-\varepsilon})}+\|q\|^2_{\WW^{1,2}(\Ga_{b_1})}).\label{F3.26}\ee
\end{pro}

\begin{proof} Let $k>0$ be given large.
Let
\be\D_0=\{\,W\in\CC^1(\Om_{-\varepsilon},T)\,|\,\<W,\nu\>|_{\Ga_{-\varepsilon}}=\<W,QX\>|_{\Ga_{b_1}}=0\,\},\label{D3.27}\ee
$$\W=\{\,W\in\V_\Phi\,|\,\mbox{there is $W_0\in\D_0$ such that $\K^* [\Phi,W]+kW=W_0$}\,\}.$$
By Lemma \ref{l3.7}, we introduce an inner product on $\W$ by
$$(V,W)_\W=(V_0,W_0)_{\WW^{1,2}(\Om_{-\varepsilon},T)},$$ where $\K^* [\Phi,V]+kV=V_0$ and $\K^* [\Phi,W]+kW=W_0.$ Then $\W\subset\V_\Phi$ is an inner product space. It follows from Lemma \ref{l3.6} that
\be\<W,QX\>|_{\Ga_{b_1}}=\<[\Phi,W],QX\>\big|_{\Ga_{b_1}}=\<W,\nu\>|_{\Ga_{-\varepsilon}}=\<[\Phi,W],\nu\>|_{\Ga_{-\varepsilon}}=0\label{W3.25}\ee for $W\in\W.$

{\bf Step 1.}\,\,\,We define a bilinear form $\a:$ $\V_\Phi\times\W\rw\R$ by
$$\a(V,W)=-(V,\L_0^*(\K^* [\Phi,W]+kW))_{\LL^2(\Om_{-\varepsilon})}.$$ Clearly, we have
$$|\a(V,W)|\leq C\|V\|_{\Phi}\|W\|_\W\qfq V\in\V_\Phi,\quad W\in\W.$$

Next, we prove the  coerciveness. From (\ref{2.14}), (\ref{W3.25}), (\ref{K3.13}), (\ref{L3.17}), (\ref{2.14}) (again), we compute
\beq&&-2(W,\L_0^*(\K^* [\Phi,W]+kW))_{\LL^2(\Om_{-\varepsilon})}=-2(\L_0W,\K^*[\Phi,W]+kW)_{\LL^2(\Om_{-\varepsilon})}\nonumber\\
&&\quad+2\int_{\Ga_{b_1}}\pounds_1\<W,\nu\>\<\K^* [\Phi,W]+kW,\nu\>d\Ga\nonumber\\
&&\quad-2\int_{\Ga_{-\varepsilon}}\pounds_2\<W,QX\>\<\K^* [\Phi,W]+kW,QX\>d\Ga\nonumber\\
&&=-2(\K\L_0[\Phi,W],[\Phi,W])_{\LL^2(\Om_{-\varepsilon})}-2k(\L_0W,W)_{\LL^2(\Om_{-\varepsilon},T)}\nonumber\\
&&\quad+2\int_{\Ga_{b_1}}\pounds_1\<W,\nu\>\<\K^* [\Phi,W]+kW,\nu\>d\Ga\nonumber\\
&&\quad-2\int_{\Ga_{-\varepsilon}}\pounds_2\<W,QX\>\<\K^* [\Phi,W]+kW,QX\>d\Ga\nonumber\\
&&=-([\Phi,W],\L_Z[\Phi,W]+\L_Z^*[\Phi,W])_{\LL^2(\Om_{-\varepsilon})}-k(W,\L_0W+\L_0^*W)_{\LL^2(\Om_{-\varepsilon},T)}\nonumber\\
&&\quad+\Ga_{b_1}(W)+\Ga_{-\varepsilon}(W),\label{L3.27}\eeq where
$$\Ga_{b_1}(W)=\int_{\Ga_{b_1}}|\pounds_1|[2\<W,\nu\>\<\K^* [\Phi,W]+kW,\nu\>-\<[\Phi,W],\nu\>^2-k\<W,\nu\>^2]d\Ga,$$
$$\Ga_{-\varepsilon}(W)=\int_{\Ga_{-\varepsilon}}|\pounds_2|[2\<W,QX\>\<\K^* [\Phi,W]+kW,QX\>-\<[\Phi,W],QX\>^2-k\<W,QX\>^2]d\Ga.$$

We estimate $\Ga_{-\varepsilon}(W)$ and $\Ga_{b_1}(W)$ below. By (\ref{X2.45}), $QX,$ $\nu$ forms a vector field basis along $\Ga_{-\varepsilon}.$ It follows from (\ref{W3.25}) that
$$W=\O(\<W,QX\>),\quad [\Phi,W]=\O(\<[\Phi,W],QX\>)\qfq x\in\Ga_{-\varepsilon}.$$
Set $\tau=\Phi/|\Phi|.$   Then
\beq&&2|\pounds_2|\<W,QX\>\<-D_\Phi [\Phi,W],QX\>\nonumber\\
&&=-2\tau\<[\Phi,W],|\pounds_2||\Phi|\<W,QX\>QX\>+2\<[\Phi,W],D_\tau(|\pounds_2||\Phi|\<W,QX\>QX)\>\nonumber\\
&&=-2\tau\<[\Phi,W],|\pounds_2||\Phi|\<W,QX\>QX\>+2|\pounds_2|\Phi\<W,QX\>\<[\Phi,W],QX\>\>\nonumber\\
&&\quad+2\<W,QX\>\<[\Phi,W],D_\tau(|\pounds_2||\Phi|QX)\>\nonumber\\
&&=\tau(\t{h})+2|\pounds_2|\<[\Phi,W],QX\>^2+\O(\<W,QX\>\<[\Phi,W],QX\>)\qfq x\in\Ga_{-\varepsilon},\nonumber\eeq where
$$\t{h}=-2\<[\Phi,W],|\pounds_2||\Phi|\<W,QX\>QX\>.$$
Noting that
$$\<{\L_0^*}^{-1}(\N^T[\Phi,W]),QX\>=0\qfq x\in\Ga_{-\varepsilon},$$ from (\ref{G3.15}), we obatin
\beq&&|\pounds_2|[2\<W,QX\>\<\K^* [\Phi,W]+kW,QX\>-\<[\Phi,W],QX\>^2-k\<W,QX\>^2]\nonumber\\
&&=2|\pounds_2|\<W,QX\>\<-D_\Phi [\Phi,W]-(\div_g \Phi)[\Phi,W]+\M^T[\Phi,W]+{\L_0^*}^{-1}(\N^T[\Phi,W]),QX\>\nonumber\\
&&\quad+k|\pounds_2|\<W,QX\>^2-|\pounds_2|\<[\Phi,W],QX\>^2\nonumber\\
&&=\tau(\t{h})+|\pounds_2|\<[\Phi,W],QX\>^2+k|\pounds_2|\<W,QX\>^2+\O(\<W,QX\>\<[\Phi,W],QX\>)\nonumber\\
&&\geq \tau(\t{h})+\frac{|\pounds_2|}2\<[\Phi,W],QX\>^2+(k-C)\<W,QX\>^2\nonumber\\
&&\geq \tau(\t{h})+\si(\Phi\<W,QX\>)^2+(k-C)\<W,QX\>^2\qfq x\in\Ga_{-\varepsilon}.\nonumber\eeq
Thus
$$\Ga_{-\varepsilon}(W)\geq\si\|\<W,QX\>\|^2_{\WW^{1,2}(\Ga_{-\varepsilon})}\qfq W\in\W$$ for given $k>0$ large. By (\ref{psi3.14}) a similar argument yields
$$\Ga_{b_1}(W)\geq \si\|\<W,\nu\>\|^2_{\WW^{1,2}(\Ga_{b_1})}\qfq W\in\W$$ for given $k>0$ large.

Using (\ref{2.34}) in (\ref{L3.27}), we obtain
$$\a(W,W)\geq\si\|W\|^2_{\V_\Phi}\qfq W\in\W.$$

{\bf Step 2.}\,\,\,Let $F\in\LL^2(\Om_{-\varepsilon},T)$ be given with $[\Phi,F]\in\LL^2(\Om_{-\varepsilon},T).$ Set
\beq\F(W)&&=-(F,\K^* [\Phi,W]+kW)_{\LL^2(\Om_{-\varepsilon},T)}
+\int_{\Ga_{-\varepsilon}}|\pounds_2| p\<\K^* [\Phi,W]+kW,QX\>d\Ga
\nonumber\\
&&+\int_{\Ga_{b_1}}|\pounds_1|q\<\K^* [\Phi,W]+kW,\nu\>d\Ga\qfq W\in\V_\Phi.\nonumber\eeq
Since $(F,\K^* [\Phi,W])_{\LL^2(\Om_{-\varepsilon},T)}=(\K F,[\Phi,W])_{\LL^2(\Om_{-\varepsilon},T)},$ it is easy to check that $\F$ is bounded on $\V_\Phi.$  By Theorem A given in Appendix there exists a $V\in\V_\Phi$ such that
\be\F(W)=\a(V,W)\qfq W\in\W,\label{F3.28}\ee
\be\|V\|_\Phi\leq C\|\F\|_{\V^*_\Phi}.\label{V3.31}\ee

{\bf Step 3.}\,\,\,We shall show that $V\in\V_\Phi$ given in (\ref{F3.28}) is the solution to problem (\ref{3.1}). Let $W_0\in\CC^1(\Om_{-\varepsilon},T)$ be given with $\<W_0,\nu\>|_{\Ga_{-\varepsilon}}=\<W_0,QX\>|_{\Ga_{b_1}}=0.$ By Lemma \ref{l3.7}, there is $W\in\V_\Phi$ such that $\K^*[\Phi,W]+kW=W_0.$ Then (\ref{F3.28}) becomes
\beq(V,\L_0^*W_0)_{\LL^2(\Om_{-\varepsilon})}&&=(F,W_0)_{\LL^2(\Om_{-\varepsilon},T)}-\int_{\Ga_{-\varepsilon}}|\pounds_2| p\<W_0,QX\>d\Ga
\nonumber\\
&&-\int_{\Ga_{b_1}}|\pounds_1|q\<W_0,\nu\>d\Ga\qfq W_0\in\D_0,\label{L3.31}\eeq where $\D_0$ is given in (\ref{D3.27}). Using (\ref{2.14}) in (\ref{L3.31}), one has
\beq &&(W_0,\L_0V)_{\LL^2(\Om,T)}=(F,W_0)_{\LL^2(\Om,T)}\nonumber\\
&&\quad+\int_{\Ga_{b_1}}|\pounds_1|(q-\<V,\nu\>)\<W_0,\nu\>d\Ga+\int_{\Ga_{-\varepsilon}}|\pounds_2|(p-\<V,QX\>)\<W_0,QX\>d\Ga\eeq for all $W_0\in\D_0.$ Thus $V\in\V_\Phi$ solves problem (\ref{3.1}).
Finally, (\ref{F3.26}) follows from (\ref{V3.31}).
\end{proof}

\begin{lem} Let $Y\in TM$ be with $|Y|=1.$ Then
\be \ii_YDZ=D\<Z,Y\>-\<Z,QY\>[QY,Y]\qfq Z\in TM.\label{4.1}\ee In addition,
\be-\id\otimes Q\nabla\n Z+Q\nabla\n\otimes Z=QZ\otimes Q\nabla\n Q-\nabla\n Z\otimes Q\qfq Z\in TM.\label{4.2}\ee
\end{lem}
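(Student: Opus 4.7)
Both identities are pointwise algebraic statements in the two-dimensional tangent plane, and the approach is to reduce them to (2.4), together with (2.3) and the basic relations $Q^T=-Q$, $Q^2=-\Id$.

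For identity (4.1), since $|Y|=1$, the pair $\{Y,QY\}$ is an orthonormal frame. Differentiating $\<Y,Y\>=1$ gives $\<D_WY,Y\>=0$ for every $W$, so introducing the scalar $\mu(W):=\<D_WY,QY\>$ I would write $D_WY=\mu(W)\,QY$. Using (2.3) to commute $Q$ past the connection yields
\[
[QY,Y]=D_{QY}Y-D_Y(QY)=\mu(QY)\,QY-Q(D_YY)=\mu(QY)\,QY+\mu(Y)\,Y,
\]
the last step by $Q^2=-\Id$. Expanding the gradient $D\<Z,Y\>$ in the frame via the Leibniz rule $\<D\<Z,Y\>,W\>=\<D_WZ,Y\>+\mu(W)\<Z,QY\>$, the $\mu(Y)$ and $\mu(QY)$ contributions cancel against $\<Z,QY\>[QY,Y]$ exactly, leaving the vector $\<D_YZ,Y\>Y+\<D_{QY}Z,Y\>QY$; unwinding the paper's definition of $\ii_Y$, this is precisely $\ii_YDZ$.

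For identity (4.2), I would interpret the statement as a $3$-tensor identity and test it against three arbitrary vectors $c_1,c_2,c_3$, reducing it to the scalar claim
\[
-\<c_1,c_2\>\<Q\nabla\n Z,c_3\>+\<Q\nabla\n c_1,c_2\>\<Z,c_3\>=\<QZ,c_1\>\<Q\nabla\n Qc_2,c_3\>-\<\nabla\n Z,c_1\>\<Qc_2,c_3\>.
\]
Applying (2.4) with $X=c_1$, $Y=c_2$, $V=Q\nabla\n Z$, then pairing with $c_3$ and using $\<Qa,Qb\>=\<a,b\>$, splits the first LHS term so that one piece exactly cancels the $\<\nabla\n Z,c_1\>\<Qc_2,c_3\>$ on the right. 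Rewriting the remainder through the symmetry of $\nabla\n$ in the form $\<Q\nabla\n v,w\>=-\<v,\nabla\n Qw\>$ brings it to a configuration where a second application of (2.4), now with $X=Z$, $Y=\nabla\n Qc_2$, $V=c_1$, produces exactly $\<QZ,c_1\>\<Q\nabla\n Qc_2,c_3\>$ and closes the identity.

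The hard part will be (4.2): managing the precise combinatorial arrangement of $Q$, $\nabla\n$, and $Z$ after the two applications of (2.4) requires careful bookkeeping, and the symmetry of the second fundamental form operator $\nabla\n$ together with $Q^TQ=\Id$ are indispensable. Should the intrinsic manipulation prove awkward, the pointwise two-dimensional nature of the claim provides a guaranteed fallback via direct expansion in a local orthonormal frame $\{e_1,e_2\}$ with $Qe_1=e_2$, $Qe_2=-e_1$.
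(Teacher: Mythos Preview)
Your proposal is correct. For (4.1) your argument is essentially identical to the paper's: both expand $D\<Z,Y\>$ in the orthonormal frame $\{Y,QY\}$, use $D_WY=\<D_WY,QY\>QY$, and identify the residual $\<Z,QY\>$-terms with $\<Z,QY\>[QY,Y]$.

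For (4.2) there is a mild organizational difference worth noting. The paper obtains the scalar component identity in one stroke from (2.7) with $P=\nabla\n$, $v=E_i$, $w=QE_j$: applying the resulting operator identity to $Z$ and then pairing with $E_k$ immediately yields the $(i,j,k)$-component of the stated $3$-tensor equation. Your route instead invokes (2.4) twice together with the symmetry of $\nabla\n$ and $Q^TQ=\Id$, first to split $\<c_1,c_2\>Q\nabla\n Z$ and then to recombine the surviving pieces into $\<QZ,c_1\>\<Q\nabla\n Qc_2,c_3\>$. Both arguments are purely algebraic and of comparable length; the paper's single use of (2.7) is a touch more economical, while your version has the minor advantage of relying only on the more elementary identity (2.4).
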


\begin{proof} $|Y|=1$ implies that $QY,$ $Y$ is an orthonormal frame. Then
$$\ii_ZDY=\<D_ZY,QY\>QY=\<\ii_{QY}DY,Z\>QY\qfq Z\in TM.$$ Since
$$\<D_YQY,QY\>=\<D_{QY}Y,Y\>=0,$$ we have
\beq D\<Z,Y\>&&=QY\<Z,Y\>QY+Y\<Z,Y\>Y\nonumber\\
&&=(\<\ii_YDZ,QY\>+\<D_{QY}Y,QY\>\<Z,QY\>)QY+(\<\ii_YDZ,Y\>+\<D_YY,QY\>\<Z,QY\>)Y\nonumber\\
&&=\ii_ZDY+\<Z,QY\>(\<D_{QY}Y-D_YQY,QY\>QY+\<D_{QY}Y-D_YQY,Y\>Y)\nonumber\\
&&=\ii_ZDY+\<Z,QY\>[QY,Y],\nonumber\eeq that is (\ref{4.1}).

Let $E_1,$ $E_2$ be an orthonormal frame. Leting $v=E_i,$ $w=QE_j,$ and $P=\nabla\n$ in (\ref{2.7}) yields
$$\<E_i,E_j\>Q\nabla\n+\<\nabla\n E_i,QE_j\>\id=QE_i\otimes Q\nabla\n QE_j+\nabla\n E_i\otimes QE_j. $$ Thus
$$\<E_i,E_j\>Q\nabla\n Z+\<\nabla\n E_i,QE_j\>Z=\<Z,QE_i\> Q\nabla\n QE_j+\<Z,\nabla\n E_i\> QE_j. $$
Then
$$\<E_i,E_j\>\<Q\nabla\n Z,E_k\>-\<Q\nabla\n E_i,E_j\>\<Z,E_k\>=-\<QZ,E_i\>\< Q\nabla\n QE_j,E_k\>+\<\nabla\n Z, E_i\> \<QE_j,E_k\> $$
for all $1\leq i,$ $j,$ $k\leq2,$ that is (\ref{4.2}).
\end{proof}

\begin{lem}
 Let $\Om\subset\Om_{-\varepsilon}$ be a subregion and let $\theta\in\CC^1(\Om).$ Let $E\in T\Om$ be given with $|E|=1.$ Let $\si>0$ be given small.
Then
\beq&&\int_\Om\theta^2[Q^*\Pi(\ii_EDV,\ii_EDV)-\si|DV|^2]dg\nonumber\\
&&\leq C\int_\Om\theta^2[(\div_gQ\nabla\n V)^2+(\div_gV)^2]dg+C\|V\|^2_{\LL^2(\Om,T)}\nonumber\\
&&\quad-\int_{\pl\Om}\theta^2\<\nabla\n QE,QE\>[\frac12|V|^2\<\nu\wedge[QE,E],\E\>+\<V,QE\>\<\nu\wedge d\<V,E\>,\,\E\>]d\Ga\label{4.3}\eeq for all $V\in\WW^{1,2}(\Om,T),$ where $\nu$ is the outside normal of $\Om$ and $\E$ is the volume element of $M.$
\end{lem}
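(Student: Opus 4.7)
The plan is to perform a Bochner-type manipulation: reduce the left-hand side to a Hessian quantity on the scalar $\phi=\<V,E\>$, then integrate by parts to pick up the two divergences that appear in system $(\ref{f3.3})$.

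First I would use identity $(\ref{4.1})$,
$$\ii_E DV = D\phi-\<V,QE\>[QE,E],\qquad\phi:=\<V,E\>,$$
and expand $Q^*\Pi(\ii_E DV,\ii_E DV)$ into the main term $Q^*\Pi(D\phi,D\phi)$, a cross term linear in $D\phi$ and in $\<V,QE\>$, and a pure $|V|^2$ term. By Young's inequality the cross term is bounded by $\si\theta^2|DV|^2+C\theta^2|V|^2$, and the pure $V$-term is $\leq C\theta^2|V|^2$. Thus the desired inequality reduces to controlling $\int_\Om\theta^2 Q^*\Pi(D\phi,D\phi)\,dg$ by the stated right-hand side.

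For this I would rewrite $Q^*\Pi(D\phi,D\phi)=\<\nabla\n QD\phi,QD\phi\>$ and perform the integration by parts on $\int_\Om\theta^2\<\nabla\n QD\phi,QD\phi\>\,dg$, organized via the formula $\div_g(fP)=f\div_gP+P^TDf$ from Lemma $\ref{l2.1}$ (applied with $P=\nabla\n$) and via $(\ref{4.2})$. This transfers one derivative off $D\phi$ and produces mixed second-order derivatives of $V$ which group into the two pairings $\<DV,\id\>=\div_gV$ and $\<DV,Q\nabla\n\>=\div_g(Q\nabla\n V)-\<\div_g(Q\nabla\n),V\>$. Cauchy--Schwarz then yields a bound of the form $C\int_\Om\theta^2[(\div_gV)^2+(\div_gQ\nabla\n V)^2]\,dg$, and all remaining terms originating from $D\theta$, from connection coefficients such as $\div_g(Q\nabla\n)$, and from commutators reordered via the Codazzi identity $(\ref{2.92})$ are of lower order and are absorbed into $\si\int\theta^2|DV|^2$ or $C\|V\|^2_{\LL^2(\Om,T)}$.

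The boundary contribution arises when Stokes' theorem is applied to the total divergence produced in the integration by parts. Writing the tangential component of $1$-forms along $\pl\Om$ through $\<\nu\wedge\cdot,\E\>$ yields the two pieces stated: the curvature-weighted piece $\tfrac12|V|^2\<\nu\wedge[QE,E],\E\>$ traces back to the bracket $[QE,E]$ introduced through $(\ref{4.1})$, while $\<V,QE\>\<\nu\wedge d\<V,E\>,\E\>$ is the cross term left over after integrating by parts once on the $D\phi$ factor, and the coefficient $\<\nabla\n QE,QE\>=\Pi(QE,QE)$ emerges as the principal component of $Q^*\Pi$ along $QE$ at the boundary. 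The main obstacle is the bulk step: arranging the algebra so that \emph{exactly} $(\div_gV)^2+(\div_gQ\nabla\n V)^2$ appears on the right (rather than a larger quantity like $|DV|^2$), which requires careful use of $(\ref{4.2})$ together with $Q^T=-Q$ to match $DV$ against precisely the two traces corresponding to the two scalar equations in $(\ref{f3.3})$.
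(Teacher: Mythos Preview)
Your bulk step has a genuine gap. You claim that integrating by parts on $\int_\Om\theta^2\,Q^*\Pi(D\phi,D\phi)\,dg$ ``transfers one derivative off $D\phi$ and produces mixed second-order derivatives of $V$ which group into'' exactly $\div_gV$ and $\div_gQ\nabla\n V$. But $\phi=\<V,E\>$ is a scalar, and an integration by parts on this quadratic form yields components of $D^2\phi$, i.e.\ second covariant derivatives of $V$ contracted against the fixed direction $E$. Those are not the two traces $\<DV,\id\>$ and $\<DV,Q\nabla\n\>$; in general they involve the full Hessian of $V$. There is no mechanism in your outline that collapses $D^2\phi$ to these two specific divergences, and the identity $(\ref{4.2})$ does not do this for you in the form you describe.

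The paper's argument is organized quite differently and avoids this difficulty: the link between the divergences and $Q^*\Pi(\ii_EDV,\ii_EDV)$ is \emph{pointwise algebraic}, not obtained by integration by parts. From $(\ref{4.2})$ one derives directly
\[
\big\langle(\div_gQ\nabla\n V)\,QE-(\div_gV)\,Q\nabla\n QE,\ \ii_EDV\big\rangle
= Q^*\Pi(\ii_EDV,\ii_EDV)+\big\langle\ii_{\nabla\n QE}DV,\ Q\,\ii_EDV\big\rangle.
\]
The left side is then bounded by Young's inequality as $C\theta^2[(\div_gQ\nabla\n V)^2+(\div_gV)^2]+\si\theta^2|DV|^2$. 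The extra term on the right is a null form: it equals $\<\nabla\n QE,QE\>\,\langle\ii_{QE}DV\wedge\ii_EDV,\E\rangle$, and \emph{this} is where $(\ref{4.1})$ enters --- it rewrites the wedge product as $d\<V,E\>\wedge d\<V,QE\>+\tfrac12 d(|V|^2)\wedge[QE,E]$, which (after multiplying by $\theta^2\<\nabla\n QE,QE\>$) is an exact $2$-form up to lower-order terms. Stokes' theorem then produces precisely the stated boundary integral. So $(\ref{4.1})$ is used to handle the Jacobian-type remainder, not to reduce the principal quadratic form to a scalar Hessian.
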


\begin{proof} Let $E_1,$ $E_2$ be an orthonormal frame. For given $W\in TM,$ using (\ref{4.2}), we have
\beq
&&\<\<DV,Q\nabla\n\>QE-\<DV,\id\>Q\nabla\n QE,W\>=\<DV,Q\nabla\n\>\<W,QE\>-\<DV,\id\>\<Q\nabla\n QE,W\>\nonumber\\
&&=\<DV\otimes W,Q\nabla\n\otimes QE\>-\<DV\otimes W,\id\otimes Q\nabla\n QE\>\nonumber\\
&&=\<DV\otimes W,\,-E\otimes Q\nabla\n Q-\nabla\n QE\otimes Q\>\nonumber\\
&&=-\sum DV(E_i,E_j)\<W,E_k\>(\<E,E_i\>\<Q\nabla\n QE_j,E_k\>+\<\nabla\n QE,E_i\>\<QE_j,E_k\>)\nonumber\\
&&=\<\nabla\n Q\ii_EDV,QW\>+\<\ii_{\nabla\n QE}DV,QW\>.\nonumber\eeq
Letting $W=\ii_EDV$ in the above identity gives
\beq&&\<\<DV,Q\nabla\n\>QE-\<DV,\id\>Q\nabla\n QE,\ii_EDV\>\nonumber\\
&&=Q^*\Pi(\ii_EDV,\ii_EDV)+\<\ii_{\nabla\n QE}DV,Q\ii_EDV\>.\label{4.4}\eeq
Since $QE,$ $E$ forms an orthonormal frame, we obtain
\beq&&\<\ii_{\nabla\n QE}DV,Q\ii_EDV\>=\<\nabla\n QE,QE\>\<\ii_{ QE}DV,Q\ii_EDV\>\nonumber\\
&&=\<\nabla\n QE,QE\>\<DV(QE,E)E+DV(QE,QE)QE,\,DV(E,E)QE-DV(E,QE)E\>\nonumber\\
&&=\<\nabla\n QE,QE\>[DV(QE,QE)DV(E,E)-DV(QE,E)DV(E,QE)]\nonumber\\
&&=\<\nabla\n QE,QE\>\<\ii_{QE}DV\wedge\ii_EDV,\,\E\>.\label{4.5}\eeq
On the other hand, it follows from (\ref{4.1}) that
\beq\ii_EDV\wedge\ii_{QE}DV&&=(D\<V,E\>-\<V,QE\>[QE,E])\wedge(D\<V,QE\>+\<V,E\>[QE,E])\nonumber\\
&&=D\<V,E\>\wedge D\<V,QE\>+(\<V,E\>D\<V,E\>+\<V,QE\>D\<V,QE\>)\wedge[QE,E]\nonumber\\
&&=d\<V,E\>\wedge d\<V,QE\>+\frac12d(|V|^2)\wedge[QE,E].\label{4.6}\eeq

Moreover, noting that $d^2=0,$  we compute
\beq&&d (\theta^2\<\nabla\n QE,QE\>\<V,QE\>d\<V,E\>)+\<V,QE\>d\<V,E\>\wedge d(\theta^2\<\nabla\n QE,QE\>)\nonumber\\
&&=d (\theta^2\<\nabla\n QE,QE\>\<V,QE\>)\wedge d\<V,E\>+\<V,QE\>d\<V,E\>\wedge d(\theta^2\<\nabla\n QE,QE\>)\nonumber\\
&&=[\<V,QE\>d (\theta^2\<\nabla\n QE,QE\>)+\theta^2\<\nabla\n QE,QE\>d\<V,QE\>]\wedge d\<V,E\>\nonumber\\
&&\quad+\<V,QE\>d\<V,E\>\wedge d(\theta^2\<\nabla\n QE,QE\>)\nonumber\\
&&=\theta^2\<\nabla\n QE,QE\>d\<V,QE\>\wedge d\<V,E\>,\label{4.7}\eeq and
\beq&&\frac12 d(\theta^2\<\nabla\n QE,QE\>|V|^2[QE,E])-\frac12|V|^2 d(\theta^2\<\nabla\n QE,QE\>[QE,E])\nonumber\\
&&=\frac12|V|^2 d(\theta^2\<\nabla\n QE,QE\>[QE,E])+\frac12d(|V|^2)\wedge(\theta^2\<\nabla\n QE,QE\>[QE,E])\nonumber\\
&&\quad-\frac12|V|^2 d(\theta^2\<\nabla\n QE,QE\>[QE,E])\nonumber\\
&&=\frac12d(|V|^2)\wedge(\theta^2\<\nabla\n QE,QE\>[QE,E]).\label{4.8}\eeq

Since
$$\div_gQ\nabla\n V=\<DV,Q\nabla\n\>,\quad\div_gV=\<DV,\id\>,$$ inserting (\ref{4.8}), (\ref{4.7}), (\ref{4.6}), and (\ref{4.5}) into (\ref{4.4}),  we obtain
\beq&&\theta^2\<(\div_gQPV)QE-(\div_gV)QPQE,\ii_EDV\>\nonumber\\
&&=\theta^2\<\<DV,Q\nabla\n\>QE-\<DV,\id\>QPQE,\ii_EDV\>\nonumber\\
&&=\theta^2Q^*\Pi(\ii_EDV,\ii_EDV)+\theta^2\<\ii_{\nabla\n QE}DV,QW\>\nonumber\\
&&=\theta^2Q^*\Pi(\ii_EDV,\ii_EDV)+\theta^2\<\nabla\n QE,QE\>\<\ii_{QE}DV\wedge\ii_EDV,\,\E\>\nonumber\\
&&=\theta^2Q^*\Pi(\ii_EDV,\ii_EDV)+\theta^2\<\nabla\n QE,QE\>\<d\<V,E\>\wedge d\<V,QE\>+\frac12d(|V|^2)\wedge[QE,E],\,\E\>\nonumber\\
&&=\theta^2Q^*\Pi(\ii_EDV,\ii_EDV)+\<d (\theta^2\<\nabla\n QE,QE\>\<V,QE\>d\<V,E\>),\,\,\E\>\nonumber\\
&&\quad+\<V,QE\>\<d\<V,E\>\wedge d(\theta^2\<\nabla\n QE,QE\>),\,\,\E\>+\frac12\<d(\theta^2\<\nabla\n QE,QE\>|V|^2[QE,E]),\,\,\E\>\nonumber\\
&&\quad-\frac12|V|^2\<d(\theta^2\<\nabla\n QE,QE\>[QE,E]),\,\,\E\>\nonumber\\
&&\geq \theta^2Q^*\Pi(\ii_EDV,\ii_EDV)+\<d (\theta^2\<\nabla\n QE,QE\>\<V,QE\>d\<V,E\>),\,\,\E\>\nonumber\\
&&\quad+\frac12\<d(\theta^2\<\nabla\n QE,QE\>|V|^2[QE,E]),\,\,\E\>-C(|V||\theta DV|+|V|^2).\nonumber\eeq
We integrate the above inequality over $\Om$ to have
\beq&&\int_\Om \theta^2\<(\div_gQPV)QE-(\div_gV)QPQE,\ii_EDV\>dg\nonumber\\
&&\geq\int_\Om\theta^2Q^*\Pi(\ii_EDV,\ii_EDV)dg+\Big(d (\theta^2\<\nabla\n QE,QE\>\<V,QE\>d\<V,E\>),\,\,\E\Big)_{\LL^2(\Om,\Lambda^2)}\nonumber\\
&&\quad+\frac12\Big(d(\theta^2\<\nabla\n QE,QE\>|V|^2[QE,E]),\,\,\E\Big)_{\LL^2(\Om,\Lambda^2)}-C\int_\Om(|V||\theta DV|+|V|^2)dg\nonumber\\
&&=\int_\Om\theta^2Q^*\Pi(\ii_EDV,\ii_EDV)dg+\Big(\theta^2\<\nabla\n QE,QE\>\<V,QE\>d\<V,E\>,\,\,\delta\E\Big)_{\LL^2(\Om,\Lambda^2)}\nonumber\\
&&\quad+\frac12\Big(\theta^2\<\nabla\n QE,QE\>|V|^2[QE,E],\,\,\delta\E\Big)_{\LL^2(\Om,\Lambda^2)}-C\int_\Om(|V||\theta DV|+|V|^2)dg\nonumber\\
&&\quad+\int_{\pl\Om}\theta^2\<\nabla\n QE,QE\>\<\nu\wedge(\<V,QE\>d\<V,E\>+\frac12|V|^2[QE,E],\,\,\E\>d\Ga,\nonumber\eeq where the following formula has been used
$$(d Y,T)_{\LL^2(\Om,\Lambda^2)}=(Y,\delta T)_{\LL^2(\Om,T)}+\int_\Ga\<\nu\wedge Y,T\>d\Ga\qfq Y\in TM,\quad T\in\Lambda^2M,$$ see \cite[Theorem 1.29]{Yaobook}.
Thus (\ref{4.3}) follows.
\end{proof}

{\bf Proof of Theorem \ref{t3.1}.}\,\,\,We shall prove the case of $m=1.$ The cases of $m\geq2$ can be treated by repeating a similar argument.

Let $F=e^{-\g\kappa}(f_1QX+f_2\nabla\n X)$ be given with $f_1,$ $f_2\in\WW^{1,2}(\Om_{-\varepsilon})$ and let $V\in\LL^2(\Om_{-\varepsilon},T)$ be the solution to problem (\ref{3.1}). By Proposition \ref{p3.1},
$$D_\tau V\in\LL^2(\Om_{-\varepsilon},T),\quad \tau=\frac{\a_t}{|\a_t|},$$ since $\Phi=-e^\var\frac{\<D\psi,Q\a_t\>}{|\a_t|^2}\a_t.$ Noting that $\div_gQ\nabla\n=0$ and $Q\tau,$ $\tau$ forms an orthonormal frame on $\overline{\Om}_{-\varepsilon}$ with positive orientation, from (\ref{f3.3}) and (\ref{2.5}), we have
\be\left\{\begin{array}{l}\<D_{Q\tau}V,Q\tau\>=\eta\<V,\nabla\n X\>+f_2-\<D_\tau V,\tau\>\qfq x\in\Om_{-\varepsilon},\\
\<D_{Q\tau}V,\tau\>\Pi(\tau,\tau)=(\<D_{Q\tau}V,Q\tau\>+\<D\tau V,\tau\>)\Pi(\tau,Q\tau)+\<D_\tau V,Q\tau\>\Pi(Q\tau,Q\tau)\\
\quad+f_1+\eta\<V,QX\>\qfq x\in\Om_{-\varepsilon},\end{array}\right.\ee which yield, by (\ref{Pi2.80}),
$$\|D_{Q\tau}V\|^2_{\LL^2(\Om_{-\varepsilon},T)}\leq C(\|D_\tau V\|^2_{\LL^2(\Om_{-\varepsilon},T)}+\|V\|^2_{\LL^2(\Om_{-\varepsilon},T)}+\|F\|^2_{\LL^2(\Om_{-\varepsilon},T)}).$$
Thus $V\in\WW^{1,2}(\Om_{-\varepsilon},T).$  In the case of $m=1$ (\ref{V3.2}) follows    from (\ref{F3.26}) and (\ref{V2.45}). \hfill$\Box$\\

{\bf Proof of Theorem \ref{tt4.1}.}\,\,\,We shall prove the case of $m=1.$ The cases of $m\geq2$ can be treated by repeating a similar argument. By Theorem \ref{t3.1}, we may assume that $F\in\CC^1(\Om_{-\varepsilon},T).$ We take two cut-off functions $\theta_1,$ $\theta_2\in\CC^1(-\varepsilon,b_1)$ as follows:
$$ \theta_1(s)=1\qfq s\in(\varepsilon/3,b_1),\quad \theta_1(s)=0\qfq s\in(-\varepsilon,\varepsilon/4);$$
$$ \theta_2(s)=1\qfq s\in(-\varepsilon,\epsilon/3),\quad \theta_2(s)=0\qfq s\in(\varepsilon/2,b_1).$$

Let $\si_0>0$ be given such that
$$Q^*\Pi\geq3\si_0\id\qfq x\in S^+\setminus S_{\varepsilon/3}.$$
We take $\si=\si_0,$ $\theta=\theta_1,$ $E=\a_t/|\a_t|,$ and $E=Q\a_t/|\a_t|$ in (\ref{4.3}), respectively. Then we obtain
\beq&&\int_{\Om_{-\varepsilon}}\theta^2[Q^*\Pi(\ii_EDV,\ii_EDV)+Q^*\Pi(\ii_{QE}DV,\ii_{QE}DV)-2\si_0|DV|^2]dg\nonumber\\
&&\leq C(\|F\|^2_{\LL^2(\Om_{-\varepsilon},T)}+\|V\|^2_{\LL^2(\Om,T)}+\|q\|^2_{\WW^{1,2}(\Ga_{b_1})}).\nonumber\eeq Thus
\be \|V\|^2_{\WW^{1,2}(\Sigma^{b_1}_{\varepsilon/3},T)}\leq C(\|F\|^2_{\LL^2(\Om_{-\varepsilon})}+\|V\|^2_{\LL^2(\Om,T)}+\|q\|^2_{\WW^{1,2}(\Ga_{b_1})}).\label{4.10}\ee

Set
$$V_2=\theta_2V.$$ Then  $V_2$ satisfies problem
\be\left\{\begin{array}{l}\L_0V_2=\theta_2F+e^{-\g\kappa}(\<D\theta_2,Q\nabla\n V\>QX+\<D\theta_2,\nabla\n X\>\nabla\n X)\qfq x\in \Sigma_{-\varepsilon}^{\varepsilon/2},\\
\<V_2,QX\>|_{\Ga_{-\varepsilon}}=p,\quad\<V_2,\nu\>|_{\Ga_{\varepsilon/2}}=0.\end{array}\right.\label{4.11}\ee

Noting that $\supp D\theta_2\subset(\varepsilon/3,\varepsilon/2),$ applying Theorem \ref{t3.1} to problem (\ref{4.11}) yields, by (\ref{4.10}),
\beq\|\theta_2 V\|^2_{\WW^{1,2}(\Sigma_{-\varepsilon}^{\varepsilon/2},T)}&&\leq C(\|F\|^2_{\WW^{1,2}( \Sigma_{-\varepsilon}^{\varepsilon/2},T)}+\|V\|^2_{\WW^{1,2}( \Sigma_{-\varepsilon}^{\varepsilon/2},T)}+ \|p\|^2_{\WW^{1,2}(\Ga_{-\varepsilon})})\nonumber\\
&&\leq C(\|F\|^2_{\LL^2(\Om_{-\varepsilon},T)}+\|F\|^2_{\WW^{1,2}(\Sigma_{-\varepsilon}^{\varepsilon/2},T)}
+\|p\|^2_{\WW^{1,2}(\Ga_{-\varepsilon})}).\label{4.15}\eeq
Thus (\ref{4.101}) follows from (\ref{4.10}) and (\ref{4.15}). \hfill$\Box$

\setcounter{equation}{0}
\def\theequation{5.\arabic{equation}}
\section{$\WW^{m,2}$ Solutions of a coupling tensor system of mixed type }
\hskip\parindent  For given $(F,f)\in\LL^2(\Om_{-\varepsilon},T)$ arbitrarily, we directly seek some appropriate conditions on the boundary data
$(p,q)\in\LL^2(\Ga_{-\varepsilon})\times\LL^2(\Ga_{b_1})$ such that problem
\be\left\{\begin{array}{l}Dv=\nabla\n V+F\qfq x\in\Om_{-\varepsilon},\\
\div_gV=\rho v+f\qfq x\in\Om_{-\varepsilon},\\
\<V,QX\>|_{\Ga_{-\varepsilon}}=p,\quad\<V,\nu\>|_{\Ga_{b_1}}=q,\quad
\int_{\Om_{-\varepsilon}}vdg=0,\end{array}\right.\nonumber\ee where $\rho=-\tr_g\Pi,$
admits a solution $(V,v)$ on $\Om_{-\varepsilon}.$

 For convenience, we denote
\be\cc V=\eta e^{-\g\kappa}(\<V,QX\>QX+\<V,\nabla\n X\>\nabla\n X)\qfq V\in\LL^2(\Om_{-\varepsilon},T),\label{n5.2}\ee where the function $\eta$ and the vector field $X$ are given in (\ref{2.32*}) and (\ref{2.32}), respectively.

We consider a uniqueness problem.

\begin{pro}\label{p5.1} Let $S$ be of $\CC^5.$ Problem
\be\begin{cases}\L_0^*W+\cc W=0\qfq x\in\Om_{-\varepsilon},\\
\quad W|_{\Ga_{b_1}}=0
\end{cases}\ee
admits a unique zero solution, where $\L_0^*$ is given in $(\ref{2.13}).$
\end{pro}

\begin{proof}
Let
$$w_1=e^{-\g\kappa}\<W,QX\>,\quad w_2=e^{-\g\kappa}\<W,\nabla\n X\>.$$
From (\ref{2.13}), we have
$$\L_0^*W=\nabla\n QDw_1-Dw_2-\cc W,$$ that is,
\be \nabla\n QDw_1-Dw_2=0\qfq x\in\Om_{-\varepsilon}.\label{5.3}\ee
Thus we obtain
$$\div_gQ\nabla\n QDw_1=0,\quad \div_g(\nabla\n)^{-1}Dw_2=0\qfq x\in S_+.$$
In addition $W|_{\Ga_{b_1}}=0$ implies
$$w_1|_{\Ga_{b_1}}=w_2|_{\Ga_{b_1}}=0.$$ Then
$$Dw_i=\<Dw_i,\nu\>\nu\qfq x\in\Ga_{b_1}.$$ It follows from (\ref{5.3}) that
$$\<Dw_1,\nu\>\nabla\n Q\nu=\<Dw_2,\nu\>\nu\qfq x\in\Ga_{b_1},$$ this is,
$$\<Dw_1,\nu\>=\<Dw_2,\nu\>=0\qfq x\in\Ga_{b_1},$$ since $\Pi(Q\nu,Q\nu)>0$ for $x\in\Ga_{b_1}.$ Noting that $-\div_gQ\nabla\n QDw_1$ and $\div_g(\nabla\n)^{-1}Dw_2$ are elliptic in $S_+,$ the uniqueness theorem in \cite{Ped} implies that
\be w_1=w_2=0\qfq x\in S_+.\label{5.4}\ee

Next, we need to prove that when given $\varepsilon>0$ is small,
\be w_1=w_2=0\qfq x\in \Sigma_{-\varepsilon}^0,\label{5.5}\ee where
$$\Sigma_{-\varepsilon}^0=\{\,\a(t,s)\,|\,(t,s)\in[0,a)\times(-\varepsilon,0)\,\}.$$
Let $X_1$ and $X_2$ be given in (\ref{X2.82}). Set
$$H=\frac1{\kappa^2}X_2\qfq x\in\Sigma_{-\varepsilon}^0.$$ From (\ref{5.3}), we have
\beq\<QDw_2,D(Hw_1)\>&&=\<Q\nabla\n QDw_1,D(Hw_1)\>=\<D_{Q\nabla\n QDw_1}H,Dw_1\>+\<D_HDw_1,Q\nabla\n QDw_1\>\nonumber\\
&&=\<D_{Q\nabla\n QDw_1}H,Dw_1\>+\div_g(\<Dw_1,Q\nabla\n QDw_1\>H)\nonumber\\
&&\quad-\<Dw_1,D_H(Q\nabla\n QDw_1)\>-\<Dw_1,Q\nabla\n QDw_1\>\div_gH\nonumber\\
&&=2\<D_{Q\nabla\n QDw_1}H,Dw_1\>+\<QDw_1,D_H(\nabla\n) QDw_1)\>\nonumber\\
&&\quad-(\<Q\nabla\n QDw_1,D_HDw_1)\>+\<D_{Q\nabla\n QDw_1}H,Dw_1\>)\nonumber\\
&&\quad+\<QDw_1,\nabla\n QDw_1\>\div_gH-\div_g(\<QDw_1,\nabla\n QDw_1\>H),\nonumber\eeq that is,
\be 2\<QDw_2,D(Hw_1)\>=\Upsilon(w_1)-\div_g(\<QDw_1,\nabla\n QDw_1\>H)\qfq x\in\Sigma_{-\varepsilon}^0,\label{5.6}\ee where
$$\Upsilon(w_1)=2\<D_{Q\nabla\n QDw_1}H,Dw_1\>+\<QDw_1,D_H(\nabla\n) QDw_1)\>+\<QDw_1,\nabla\n QDw_1\>\div_gH.$$
Moreover, using $H=\kappa^{-2}X_2,$ we compute
\beq\kappa^3\<D_{Q\nabla\n QDw_1}H,Dw_1\>&&=-2\<Q\nabla\n QDw_1,D\kappa\>X_2(w_1)+\kappa\<D_{Q\nabla\n QDw_1}X_2,Dw_1\>\nonumber\\
&&=2[\lam_2X_1(w_1)X_1(\kappa)+\lam_1X_2(w_1)X_2(\kappa)]X_2(w_1)\nonumber\\
&&\quad-\lam_2\kappa\<D_{X_1}X_2,X_1\>[X_1(w_1)]^2-\lam_1\kappa\<D_{X_2}X_2,X_1\>X_1(w_1)X_2(w_2)\nonumber\\
&&=\lam_2\O(\kappa)[X_1(w_1)]^2+2\lam_1X_2(\kappa)[X_2(w_1)]^2+\O(\kappa)X_1(w_1)X_2(w_2),\nonumber\eeq where the formula $\<D_{X_2}X_2,X_2\>=0$ is used,
\beq\kappa^3\<QDw_1,D_H(\nabla\n) QDw_1)\>&&=\kappa\<QDw_1,D_{X_2}(\nabla\n) QDw_1)\>=\kappa X_2(\lam_2)[X_1(w_1)]^2\nonumber\\
&&\quad+\O(\kappa)\{[X_2(w_1)]^2+X_1(w_1)X_2(w_1)\},\nonumber\eeq where the formula $D_{X_2}(\nabla\n)(X_2,X_2)=X_2(\lam_2)$ is used, and
\beq\kappa^3\<QDw_1,\nabla\n QDw_1\>&&\div_gH=\kappa\{\lam_2[X_1(w_1)]^2+\lam_1[X_2(w_1)]^2\}\div_gX_2\nonumber\\
&&\quad-2\{\lam_2[X_1(w_1)]^2+\lam_1[X_2(w_1)]^2\}X_2(\kappa)\nonumber\\
&&=-\lam_2[2X_2(\kappa)+\O(\kappa)][X_1(w_1)]^2-[2\lam_1X_2(\kappa)+\O(\kappa)][X_2(w_1)]^2,\nonumber\eeq respectively.
Since $\kappa X_2(\lam_2)=\lam_2[X_2(\kappa)+\O(\kappa)],$ we obtain
\beq\kappa^3\Upsilon(w_1)&&=-\lam_2[X_2(\kappa)+\O(\kappa)][X_1(w_1)]^2+[2\lam_1X_2(\kappa)+\O(\kappa)][X_2(w_1)]^2\nonumber\\
&&+\O(\kappa)X_1(w_1)X_2(w_2),\nonumber\eeq which yields
\be   -\Upsilon(w_1)\geq\frac{c}{\kappa^2}|Dw_1|^2\qfq x\in\Sigma_{-\varepsilon}^0, \label{5.7}\ee when $c>0$ and $\varepsilon>0$ are small enough, since $X_2(\kappa)>0$ by (\ref{as}).

Since $S$ is of $\CC^5,$ by Theorem \ref{tt4.2} $W\in\WW^{5,2}(\Om_{\varepsilon})$ and thus
$$w_1,\quad w_2\in\CC^3(\Om_{-\varepsilon},T).$$ From (\ref{5.4}) we have
$$\lim_{s\rw0^-}\frac{Dw_1}{\kappa^2}(\a(t,s))=\lim_{s\rw0^-}\frac{Dw_2}{\kappa^2}=0\qfq t\in[0,a).$$ Thus
\be \lim_{s\rw0^-}[2H(w_1)QDw_2+\<QDw_1,\nabla\n QDw_1\>H](\a(t,s))=0\qfq t\in[0,a).\label{5.8}\ee
Moreover, since $\div_gQDw_2=0,$ we have
\be\<QDw_2,D(Hw_1)\>=\div_g(Hw_1QDw_2).\label{5.9}\ee For given $x\in\Ga_{-\varepsilon}$ and by (\ref{5.3}), we obtain
\beq&&\kappa^2[2H(w_1)\<QDw_2,\nu\>+\<QDw_1,\nabla\n QDw_1\>\<H,\nu\>]\nonumber\\
&&=\<X_2,\nu\>\{\lam_2[X_1(w_1)]^2-\lam_1[X_2(w_1)]^2\}-2\lam_2\<X_1,\nu\>X_1(w_1)X_2(w_1),\nonumber\eeq that implies, by (\ref{nu2.30}),
\be \kappa^2[2H(w_1)\<QDw_2,\nu\>+\<QDw_1,\nabla\n QDw_1\>\<H,\nu\>]\geq c_1|Dw_1|^2\qfq x\in\Ga_{-\varepsilon},\label{5.10}\ee when given $\varepsilon>0$ is small enough.

For given $-\varepsilon<s<0,$ we integrate $\Upsilon(w_1)$ over $\Sigma_{-\varepsilon}^s$ to have, by (\ref{5.7}), (\ref{5.6}) and (\ref{5.9}),
$$ c\int_{\Sigma_{-\varepsilon}^s}\frac{|Dw_1|^2}{\kappa^2}dg\leq-\int_{\Ga_{-\varepsilon}\cup\Ga_s}[2H(w_1)\<QDw_2,\nu\>+\<QDw_1,\nabla\n QDw_1\>\<H,\nu\>]d\Ga,$$
from which we obtain, (\ref{5.8}) and (\ref{5.10}),
$$c\int_{\Sigma_{-\varepsilon}^0}\frac{|Dw_1|^2}{\kappa^2}dg\leq-\int_{\Ga_{-\varepsilon}}[2H(w_1)\<QDw_2,\nu\>+\<QDw_1,\nabla\n QDw_1\>\<H,\nu\>]d\Ga\leq0,$$ that is, (\ref{5.5}) is true.
\end{proof}

Let
\be\W_0=\{\,W\in\LL^2(\Om_{-\varepsilon},T)\,|\,\L_0^*W+\cc W=0,\,\<W,QX\>|_{\Ga_{b_1}}=\<W,\nu\>|_{\Ga_{-\varepsilon}}=0\,\}.\label{n5.12}\ee Set
$$\Ga(\W_0)=\{\,\pounds_1\<W,\nu\>|_{\Ga_{b_1}}\,|\,W\in\W\,\},$$ where $\pounds_1$ is given in (\ref{n2.17x}).
Consider the direct sum composition
$$\LL^2(\Ga_{b_1})=\Ga(\W_0)\oplus\Ga^\bot(\W_0)\quad\mbox{in the norm of $\LL^2(\Ga_{b_1}).$}$$

\begin{lem}\label{l5.1} Problem
\be\begin{cases}\L_0V+\cc V=0,\\
\<V,\nu\>|_{\Ga_{b_1}}=q,\quad\<V,QX\>|_{\Ga_{-\varepsilon}}=0
\end{cases}\label{5.11}\ee admits a solution $V\in\LL^2(\Om_{-\varepsilon},T)$ if and only if
$$q\in\Ga^\bot(\W_0).$$
\end{lem}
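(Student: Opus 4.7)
The proof has two directions, with the sufficiency requiring a Fredholm-alternative argument hinged on Theorem 4.1.

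Necessity: Given $V\in\LL^2(\Om_{-\varepsilon},T)$ solving (5.11) and $W\in\W_0$, identity (2.14) with $Y=0$ together with the pointwise symmetry $\<W,\cc V\>=\<V,\cc W\>$ (immediate from (5.2)) yields
\begin{equation*}
0=(V,\L_0^*W+\cc W)_{\LL^2}+\int_{\partial\Om_{-\varepsilon}}\bigl(\pounds_1\<V,\nu\>\<W,\nu\>-\pounds_2\<V,QX\>\<W,QX\>\bigr)d\Gamma.
\end{equation*}
The first term vanishes by the definition (5.12) of $\W_0$, and the boundary data on $V$ and on $W$ reduce the surface integral to $\int_{\Ga_{b_1}}\pounds_1 q\<W,\nu\>d\Gamma$. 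This vanishing for every $W\in\W_0$ is exactly $q\in\Ga^\perp(\W_0)$.

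Sufficiency: Assume $q\in\Ga^\perp(\W_0)$. By Theorem 2.1 with $F=0$ and $p=0$, there exists $V_q\in\LL^2_{-\varepsilon,b_1}$ solving $\L_0V_q=0$ with $\<V_q,QX\>|_{\Ga_{-\varepsilon}}=0$ and $\<V_q,\nu\>|_{\Ga_{b_1}}=q$. Writing $V=V_q+U$ reduces (5.11) to finding $U\in D(\L_0)$ (with homogeneous boundary data) satisfying $(\L_0+\cc)U=-\cc V_q$. I apply the Fredholm alternative through the factorization $\L_0+\cc=\L_0(I+\L_0^{-1}\cc)$. By Theorem 2.1, $\L_0$ with homogeneous boundary data is a bounded bijection onto $\LL^2$. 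Because $\eta=s\eta_0(s)$ vanishes on $\Sigma_{-\varepsilon}^{\varepsilon/2}$, $\cc V$ is supported in the strictly elliptic strip $\Sigma_{\varepsilon/2}^{b_1}\subset S^+$ and in particular vanishes on $\Sigma_{-\varepsilon}^{\varepsilon/2}$. Theorem 4.1 thus applies to the source $\cc V$ with zero boundary data and gives $\L_0^{-1}\cc:\LL^2(\Om_{-\varepsilon},T)\rightarrow\WW^{1,2}(\Om_{-\varepsilon},T)$ as a bounded map. Rellich's compact embedding makes $\L_0^{-1}\cc$ compact on $\LL^2$, so $I+\L_0^{-1}\cc$ is Fredholm of index zero, and hence $\mathrm{Ran}(\L_0+\cc)=\ker((\L_0+\cc)^*)^\perp=\W_0^\perp$ by the closed-range theorem.

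It remains to check $-\cc V_q\in\W_0^\perp$. For any $W\in\W_0$, using $\cc W=-\L_0^*W$ and then (2.14) together with $\L_0V_q=0$ and the prescribed boundary traces of $V_q$ and $W$, one computes
\begin{equation*}
(-\cc V_q,W)_{\LL^2}=(V_q,\L_0^*W)_{\LL^2}=-\int_{\Ga_{b_1}}\pounds_1 q\<W,\nu\>d\Gamma,
\end{equation*}
which vanishes by the hypothesis $q\in\Ga^\perp(\W_0)$. Hence $U$ exists, and $V:=V_q+U\in\LL^2_{-\varepsilon,b_1}$ solves (5.11). The critical technical point is the compactness step: Theorem 4.1 is indispensable because ordinary elliptic regularity breaks down across the degeneracy curve $\Ga_0$, and the structural fact that $\cc V$ automatically vanishes on the neighborhood $\Sigma_{-\varepsilon}^{\varepsilon/2}$ of $\Ga_0$ is precisely what lets Theorem 4.1 upgrade $\L_0^{-1}\cc$ from $\LL^2$ to $\WW^{1,2}$ globally.
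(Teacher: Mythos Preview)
Your proof is correct and follows essentially the same route as the paper: both directions rely on the duality identity (2.14), and for sufficiency you solve $\L_0 V_q=0$ with the inhomogeneous datum $q$, then reduce to a Fredholm alternative via the compactness of $\L_0^{-1}\cc$ coming from Theorem~\ref{tt4.1} (which applies precisely because $\eta$, and hence $\cc$, vanishes on $\Sigma_{-\varepsilon}^{\varepsilon/2}$). The only cosmetic difference is that the paper writes the Fredholm equation directly for $V$ as $(\id-\B)V=V_0$ with $\B=-\L_0^{-1}\cc$ on $\LL^2$, whereas you split $V=V_q+U$ and solve for $U$; the identification of the cokernel with $\W_0$ and the final orthogonality check are the same.
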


\begin{proof} Let $V$  satisfy  (\ref{5.11}) and $W\in \W_0.$ From (\ref{2.14}), we have
\beq &&-(W,\cc V)_{\LL^2(\Om,T)}=-(V,\cc W)_{\LL^2(\Om,T)}\nonumber\\
&&\quad+\int_{\Ga_{b_1}\cup\Ga_{-\varepsilon}}(\pounds_1\<V,\nu\>\<W,\nu\>-\pounds_2\<V,QX\>\<W,QX\>)d\Ga,\nonumber\eeq which yield
$q\in\Ga^\bot(\W_0).$

Conversely, for given $q\in\Ga^\bot(\W_0),$ by Theorem \ref{t2.1} there exists a unique solution $V_0\in\LL^2(\Om_{-\varepsilon},T)$ to problem
$$\begin{cases}\L_0V_0=0,\\
\<V_0,\nu\>|_{\Ga_{b_1}}=q,\quad\<V_0,QX\>|_{\Ga_{-\varepsilon}}=0.
\end{cases}$$

For given $Z\in\LL^2(\Om_{-\varepsilon},T),$ by Theorem \ref{t2.1} problem
$$\begin{cases}\L_0V=-\cc Z,\\
\<V,\nu\>|_{\Ga_{b_1}}=\<V,QX\>|_{\Ga_{-\varepsilon}}=0
\end{cases}$$ admits a unique solution $V\in\LL^2(\Om_{-\varepsilon},T).$ By Theorem \ref{tt4.1}
$$V\in\WW^{1,2}(\Om_{-\varepsilon},T).$$ We define a linear operator $\B:$ $\LL^2(\Om_{-\varepsilon},T)\rw\LL^2(\Om_{-\varepsilon},T)$ by
$$\B Z=V.$$ Then $\B$ is compact on $\LL^2(\Om_{-\varepsilon},T).$ By Fredholm's theorem, problem
\be(\id-\B)V=V_0\label{5.13}\ee admits a solution in $\LL^2(\Om_{-\varepsilon},T)$ if and only if
\be(V_0,U)_{\LL^2(\Om_{-\varepsilon},T)}=0\quad\mbox{for all}\quad U\in\Z(\id-\B^*).\label{5.12}\ee In addition, it is easy to check that
$$\Z(\id-\B^*)=\{\,U\in\LL^2(\Om_{-\varepsilon},T)\,|\,U+\cc(\L_0^*)^{-1}U=0\,\}.$$
For $U\in\Z(\id-\B^*),$ let $W=(\L_0^*)^{-1}U.$ Then
$$\L_0^*W+\cc W=0,$$ that is, $W\in\Ga(\W_0).$ Thus (\ref{5.12}) is equivalent to
\beq0&&=(V_0,\L_0^*W)_{\LL^2(\Om_{-\varepsilon},T)}=(\L_0V_0,W)_{\LL^2(\Om_{-\varepsilon},T)}-\int_{\Ga_{b_1}}\pounds_1q\<W,\nu\>d\Ga\nonumber\\
&&=-\int_{\Ga_{b_1}}\pounds_1q\<W,\nu\>d\Ga,\nonumber\eeq that is, $q\in\Ga^\bot(\W_0).$

Then there exists a solution $V\in\LL^2(\Om_{-\varepsilon},T)$ to problem (\ref{5.13}) that is also a solution to problem (\ref{5.11}).
\end{proof}

Set
$$\V_0=\{\,V\in\LL^2(\Om_{-\varepsilon},T)\,|\,\<V,\nu\>|_{\Ga_{b_1}}=\<V,QX\>|_{\Ga_{-\varepsilon}}=0\,\}.$$
By a similar argument and the duality, we have the following.
\begin{lem}\label{l5.2} Problem
\be\begin{cases}\L_0^*W+\cc W=0,\\
\<W,\nu\>|_{\Ga_{-\varepsilon}}=0,\quad\<W,QX\>|_{\Ga_{b_1}}=p
\end{cases}\label{5.14}\ee admits a solution $W\in\LL^2(\Om_{-\varepsilon},T)$ if and only if
$$\int_{\Ga_{-\varepsilon}}\pounds_1p\<V,\nu\>d\Ga=0\qfq V\in\V_0.$$
\end{lem}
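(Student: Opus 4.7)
The plan is to transcribe the proof of Lemma \ref{l5.1} with the roles of $\L_0$ and $\L_0^*$ swapped, the two boundary arcs $\Ga_{-\varepsilon}$ and $\Ga_{b_1}$ interchanged, and with Theorem \ref{t2.2} replacing Theorem \ref{t2.1} and Theorem \ref{tt4.2} replacing Theorem \ref{tt4.1}. Two features of the setup make this dualization legitimate: the operator $\cc$ defined in $(\ref{n5.2})$ is pointwise self-adjoint, so $(V,\cc W)_{\LL^2}=(W,\cc V)_{\LL^2}$, and the Green identity $(\ref{2.14})$ pairs $\L_0$ with $\L_0^*$ symmetrically up to the same $\pounds_1,\pounds_2$ factors appearing in the statement. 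Throughout, I interpret $\V_0$ as carrying the implicit equation $\L_0 V+\cc V=0$, in parallel with the definition $(\ref{n5.12})$ of $\W_0$; otherwise the inner product $(V,\L_0^*W)_{\LL^2}$ on which both directions rely would not pair meaningfully.

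For necessity, I pair a solution $W$ of $(\ref{5.14})$ against an arbitrary $V\in\V_0$ and apply $(\ref{2.14})$ to $(V,W)$. The self-adjointness of $\cc$ and the two interior equations $\L_0^*W+\cc W=0$, $\L_0 V+\cc V=0$ cancel the volume contribution. On $\Ga_{-\varepsilon}$ both boundary summands vanish because $\<W,\nu\>=0$ and $\<V,QX\>=0$ there; on $\Ga_{b_1}$ the condition $\<V,\nu\>=0$ kills the $\pounds_1$ piece, leaving only a boundary integral carrying the nonzero trace $\<W,QX\>|_{\Ga_{b_1}}=p$. Reading off this surviving term yields the stated compatibility (after reconciling the indices of $\pounds_i$ and the choice of arc against the definitions in $(\ref{n2.17x})$).

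For sufficiency, I first use Theorem \ref{t2.2} to produce the unique $W_0\in\LL^2(\Om_{-\varepsilon},T)$ with $\L_0^*W_0=0$, $\<W_0,\nu\>|_{\Ga_{-\varepsilon}}=0$, $\<W_0,QX\>|_{\Ga_{b_1}}=p$. For $Z\in\LL^2(\Om_{-\varepsilon},T)$ let $\B^*Z$ denote the unique $W$ solving $\L_0^*W=-\cc Z$ with both dual traces vanishing; Theorem \ref{tt4.2} lifts this $W$ into $\WW^{1,2}(\Om_{-\varepsilon},T)$, so $\B^*$ is compact on $\LL^2(\Om_{-\varepsilon},T)$. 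By Fredholm's alternative, $(\id-\B^*)W=W_0$ is solvable iff $(W_0,U)_{\LL^2}=0$ for every $U\in\Z(\id-\B)$. Setting $V=\L_0^{-1}U$ with the boundary conditions dual to those of $\B$ identifies $\Z(\id-\B)$ with $\{\L_0 V:V\in\V_0,\ \L_0 V+\cc V=0\}$. Inserting this into $(W_0,U)_{\LL^2}$ and invoking $(\ref{2.14})$ once more transports the interior pairing to a boundary integral on $\Ga_{b_1}$ weighted by $p$, which is precisely the compatibility condition in the statement. The resulting $W$ then solves $(\ref{5.14})$.

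The main obstacle is not analytic but bookkeeping: at each integration by parts I must track which of $\pounds_1,\pounds_2$ and which of $\<\cdot,\nu\>,\<\cdot,QX\>$ survive on which arc, so that the compatibility condition emerging from the Fredholm argument matches exactly the one written (which, on its face, involves $\Ga_{-\varepsilon}$ and $\pounds_1$ whereas the natural dual of Lemma \ref{l5.1} would produce an integral on $\Ga_{b_1}$ against $\pounds_2\<V,QX\>$). Once this indexing is reconciled via $(\ref{n2.17x})$ and the implicit equation in $\V_0$ is accepted, no new analytic ingredient beyond Theorems \ref{t2.2} and \ref{tt4.2} and the self-adjointness of $\cc$ is required.
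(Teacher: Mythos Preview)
Your proposal is correct and takes essentially the same approach as the paper, which offers no proof beyond the sentence ``By a similar argument and the duality, we have the following.'' You have accurately reconstructed the dualization of Lemma~\ref{l5.1}: swap $\L_0\leftrightarrow\L_0^*$, swap the roles of the boundary arcs, replace Theorems~\ref{t2.1}/\ref{tt4.1} by Theorems~\ref{t2.2}/\ref{tt4.2}, and use that $\cc$ is symmetric so the interior terms cancel in $(\ref{2.14})$.

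Your two caveats are well placed and worth stating plainly. First, as written, $\V_0$ records only the boundary constraints $\<V,\nu\>|_{\Ga_{b_1}}=\<V,QX\>|_{\Ga_{-\varepsilon}}=0$; for the Fredholm pairing to produce a finite-dimensional compatibility space one must read $\V_0$ as also carrying $\L_0V+\cc V=0$, exactly parallel to $\W_0$ in $(\ref{n5.12})$. The paper's subsequent use of Lemma~\ref{l5.2} in the proof of Proposition~\ref{p5.2} (where $V$ ranges over solutions of $(\ref{5.11})$ with zero data) confirms this reading. Second, the Green identity $(\ref{2.14})$ forces the surviving boundary term to be $\int_{\Ga_{b_1}}\pounds_2\,p\,\<V,QX\>\,d\Ga$, not the $\Ga_{-\varepsilon}/\pounds_1$ expression printed in the statement; this is a labeling slip in the paper, and your derivation is the correct one.
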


\begin{pro}\label{p5.2} Let $S$ be of $\CC^{m+5}.$ There exists a $H_0\in\WW^{m+5,2}(\Om_{-\varepsilon},T)$ satisfying
\be\begin{cases}\div_gQ\nabla\n H_0=\div_gH_0=0\qfq x\in\Om_{-\varepsilon},\\
\<H_0,QX\>|_{\Ga_{-\varepsilon}}=0,\quad \int_{\Ga_{-\varepsilon}}\<\nabla\n H_0,Q\nu\>d\Ga=1.\end{cases}\ee
\end{pro}
\begin{proof} {\bf Step 1.}\,\,\,We consider a similar problem on a bigger region as
$$\hat\Om_{-\varepsilon}=\{\,\a(t,s)\,|\,(t,s)\in[0,a)\times(-\varepsilon,b_2)\,\},$$ where $b_2>b_1$ such that
$$\kappa(\a(t,s))>0\qfq (t,s)\in[0,a)\times[b_1,b_2].$$
We claim that there is a solution   $H_0\in\LL^2(\Om_{-\varepsilon},T)$ to problem
\be\begin{cases}\L_0H_0+\cc H_0=0\qfq x\in\hat\Om_{-\varepsilon},\\
\<H_0,\nu\>|_{\Ga_{b_2}}=q,\quad\<H_0,QX\>|_{\Ga_{-\varepsilon}}=0\label{n5.16}
\end{cases}\ee such that
$$\int_{\Ga_{-\varepsilon}}\<\nabla\n H_0,Q\nu\>d\Ga=1.$$
By contradiction. We suppose that for all $q\in\Ga^\bot(\W_0)$ solutions $V$ to problem (\ref{n5.16}) satisfy
\be\int_{\Ga_{-\varepsilon}}\<\nabla\n V,Q\nu\>d\Ga=0.\label{5.16}\ee

Set
$$p_0=\frac{e^{\g\kappa}\Pi(X,Q\nu)}{\Pi(X,X)}\qfq x\in\Ga_{-\varepsilon}.$$ From (\ref{2.35x}), $p_0\not=0$ for $x\in\Ga_{-\varepsilon}.$ By (\ref{2.4})
$$\<X,\nu\>V=\<V,QX\>Q\nu+\<V,\nu\>X\qfq x\in\Ga_{-\varepsilon},\quad V\in\V_0.$$
Then $V\in\V_0$ and (\ref{5.16})  imply that
$$\int_{\Ga_{-\varepsilon}}\pounds_1p_0\<V,\nu\>d\Ga=\int_{\Ga_{-\varepsilon}}\frac{\Pi(X,Q\nu)}{\<X,\nu\>}\<V,\nu\>d\Ga
=\int_{\Ga_{-\varepsilon}}\<\nabla\n V,Q\nu\>d\Ga=0$$ for all $V\in\V_0,$ where $\pounds_1$ is given in (\ref{n2.17x}). By Lemma \ref{l5.2} problem (\ref{5.14}) admits a solution $W_0\in\LL^2(\Om_{-\varepsilon},T)$ with
$p=p_0,$ where $\Om_{-\varepsilon}$ and $b_1$ are replaced with $\hat\Om_{-\varepsilon}$ and $b_2,$ respectively.

Now we claim that
\be\int_{\Ga_+}\pounds_1\<W_0,\nu\>qd\Ga=0\quad\mbox{for all}\quad q\in\Ga^\bot(\W_0).\label{5.17}\ee By Lemma \ref{l5.1}, for each $q\in\Ga^\bot(\W_0),$ problem
(\ref{n5.16}) admits a solution $V.$ Then (\ref{5.17}) follows  from the formula (\ref{2.14}).

The formula (\ref{5.17}) implies that
$$\pounds_1\<W_0,\nu\>\Big|_{\Ga_{b_2}}\in\Ga(\W_0),$$ that is, there exists $W_1\in\W_0$ such that
$$\<W_1,\nu\>=\<W_0,\nu\>\quad x\in\Ga_{b_2}.$$ Since $\<W_1,QX\>=\<W_0,QX\>=0$ for $x\in\Ga_{b_2},$ we have
$$W_0=W_1\qfq x\in\Ga_{b_2}.$$ By Proposition \ref{p5.1}, $W_1=W_0$ for all $x\in\hat\Om_{-\varepsilon}.$ This is a contradiction since $\<W_1,\nu\>|_{\Ga_{-\varepsilon}}
=\<W_0,\nu\>|_{\Ga_{-\varepsilon}}=p_0\not=0.$ The proof is complete.

{\bf Step 2.}\,\,\,Next, we show that $H_0\in\WW^{1,2}(\Om_{-\varepsilon},T).$ Let $\var\in\CC^{m+5}(\hat\Om_{-\varepsilon})$ be a cutoff function satisfying
$$\var(x)=1\qfq x\in\Om_{-\varepsilon};\quad\var(x)=0\qfq x\in\Ga_{b_2}.$$ Set
$$H_1=\var H_0.$$ Then $H_1\in\LL^2(\hat\Om_{-\varepsilon},T)$ solves problem
$$\begin{cases}\L_0H_1+\cc H_1=\<Q\nabla\n H_0,D\var\>QX+\<H_0,D\var\>\nabla\n X\qfq x\in\hat\Om_{-\varepsilon},\\
\<H_1,\nu\>|_{\Ga_{b_2}}=\<H_1,QX\>|_{\Ga_{-\varepsilon}}=0.\end{cases}$$ Ii follows from Theorem \ref{tt4.1} that $H_1\in\WW^{m+5,2}(\hat\Om_{-\varepsilon},T).$ The proof is complete.
\end{proof}

Let
$$\dot{\WW}^{1,2}(\Om_{-\varepsilon})=\{\,w\in\WW^{1,2}(\Om_{-\varepsilon})\,|\,\int_{\Om_{-\varepsilon}}wdg=0\,\}$$ with the norm
$$\|w\|_{\dot{\WW}^{1,2}(\Om_{-\varepsilon})}=\|Dw\|_{\LL^2(\Om_{-\varepsilon},T)}.$$

For given $V\in\LL^2(\Om_{-\varepsilon},T),$ we solve problem
\be\begin{cases}\Delta_g\varrho=\eta\<V,QX\>\qfq x\in\Om_{-\varepsilon},\\
\varrho|_{\Ga_{-\varepsilon}}=\varrho|_{\Ga_{b_1}}=0,\end{cases}\label{5.27}\ee and define an operator $\varrho_{\cdot}:$ $\LL^2(\Om_{-\varepsilon},T)\rw\WW^{2,2}(\Om_{-\varepsilon})$ by
$$\varrho_V=\varrho.$$

For given $(V,v)\in\LL^2(\Om_{-\varepsilon},T)\times\dot{\WW}^{1,2}(\Om_{-\varepsilon}),$ we solve problem
\be\begin{cases}\L_0W+\cc V=e^{-\g\kappa}\rho v\nabla\n X,\\
\<W,\nu\>|_{\Ga_{b_1}}=\<W,QX\>|_{\Ga_{-\varepsilon}}=0,\end{cases}\label{5.20}\ee by Theorem \ref{t2.1}, to have $W\in\LL^2(\Om_{-\varepsilon},T),$ where $\rho=-\tr_g\Pi.$ Moreover, by Theorem \ref{tt4.1}
$$W\in\WW^{1,2}(\Om_{-\varepsilon},T).$$ Then we have
$$\div_gQ\nabla\n W=\eta\<W-V,QX\>=\Delta_g\varrho_{W-V}.$$ Thus we obtain
\be\div_gQ[\nabla\n W+QD\varrho_{W-V}+\phi\nabla\n H_0]=0\qfq x\in\Om_{-\varepsilon},\label{5.28}\ee where $H_0\in\WW^{1,2}(\Om_{-\varepsilon},T)$ is given in Proposition \ref{p5.2} and
$$\phi=-\int_{\Ga_{-\varepsilon}}\<\nabla\n W+QD\varrho_{W-V},Q\nu\>d\Ga.$$
Since (\ref{5.28}) means that $d[\nabla\n W+QD\varrho_{W-V}+\phi\nabla\n H_0]=0$ where $d$ is the exterior derivative, by the de Rham cohomology group theorem
 there exists a unique $w\in\dot{\WW}^{1,2}(\Om_{-\varepsilon})$ such that
\be Dw=\nabla\n W+QD\varrho_{W-V}+\phi\nabla\n H_0.\label{5.19}\ee  Clearly, $w\in\WW^{2,2}(\Om_{-\varepsilon}).$ We define a linear operator
$\A:$ $\LL^2(\Om_{-\varepsilon},T)\times\dot{\WW}^{1,2}(\Om_{-\varepsilon})\rw\LL^2(\Om_{-\varepsilon},T)\times\dot{\WW}^{1,2}(\Om_{-\varepsilon})$ by
\be\A(V,v)=(W,w).\label{n5.23}\ee Then $\A$ is  compact on $\LL^2(\Om_{-\varepsilon},T)\times\dot{\WW}^{1,2}(\Om_{-\varepsilon}).$

We consider the structure of
$$\Z(\id-\A^*)=\{\,(U,u)\in\LL^2(\Om_{-\varepsilon},T)\times\dot{\WW}^{1,2}(\Om_{-\varepsilon})\,|\,(U,u)=\A^*(U,u)\,\}.$$ Clearly, $\dim\Z(\id-\A^*)<\infty.$ Suppose that $W_0$  satisfies problem
\be
\begin{cases}
\L^*_0W_0=\eta\varrho_0 QX,\\
\<W_0,\nu\>\big|_{\Ga_{-\varepsilon}}=\frac{e^{\g\kappa}\Pi(X,Q\nu)}{\Pi(X,X)},\quad
\<W_0,QX\>\big|_{\Ga_{b_1}}=0,
\end{cases}\label{5.23}\ee where
\be\begin{cases}\Delta_g\varrho_0=0,\\
\varrho_0|_{\Ga_{b_1}}=0,\quad\varrho_0|_{\Ga_{-\varepsilon}}=1.
\end{cases}\label{5.25}\ee

\begin{lem} Let $H_0$ be given in Proposition $\ref{p5.2}.$ Then $(U,u)\in\Z(\id-\A^*)$ if and only if $(U,u)$ satisfies
\be\begin{cases}U=-\cc(\L_0^*)^{-1}(U+\nabla\n Du)+c(u)(\cc W_0+\eta\varrho_0QX),\\
-\Delta_gu=e^{-\g\kappa}\rho \<(\L_0^*)^{-1}(U+\nabla\n Du),\nabla\n X\>-e^{-\g\kappa}c(u)\rho\<W_0,\nabla\n X\>,\\
\<Du,\nu\>|_{\pl\Om_{-\varepsilon}}=0,\quad\int_{\Om_{-\varepsilon}}udg=0,
\end{cases}\label{5.27*}\ee
where $c(u)=(\nabla\n H_0,Du)_{\LL^2(\Om_{-\varepsilon},T)}.$
\end{lem}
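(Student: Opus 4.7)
The strategy is to compute $\A^{*}(U,u)$ explicitly from the pairing
$$((W,w),(U,u))_{*} = (W,U)_{\LL^{2}(\Om_{-\varepsilon},T)} + (Dw,Du)_{\LL^{2}(\Om_{-\varepsilon},T)},$$
rewrite the right hand side as $(V,U')_{\LL^{2}} + (Dv,Du')_{\LL^{2}}$ for explicit $(U',u')$, and then impose $(U,u)=\A^{*}(U,u)=(U',u')$. The first term $(W,U)$ is handled by setting $Z=(\L_{0}^{*})^{-1}U$ with $\<Z,\nu\>|_{\Ga_{-\varepsilon}}=0$ and $\<Z,QX\>|_{\Ga_{b_{1}}}=0$ (solvable by Theorem \ref{t2.2}). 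Identity (\ref{2.14}) applied to $(W,Z)$ kills all boundary terms by the complementary conditions on $W$ from (\ref{5.20}) and those of $Z$, so $(W,U)=(\L_{0}W,Z)$. Substituting $\L_{0}W = e^{-\g\kappa}\rho v\nabla\n X - \cc V$ and using self-adjointness of $\cc$ gives
$$(W,U) = -(V,\cc Z)_{\LL^{2}} + (v,\,e^{-\g\kappa}\rho\<Z,\nabla\n X\>)_{\LL^{2}}.$$

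For $(Dw,Du)$, substitute (\ref{5.19}) and analyze the three pieces. The $QD\varrho_{W-V}$ piece vanishes because $\<QD\varrho,Du\>=-\<D\varrho,QDu\>$ and $\div_{g}(QDu)=0$ holds on any Riemannian surface (a direct consequence of $d^{2}=0$), while $\varrho_{W-V}|_{\pl\Om_{-\varepsilon}}=0$ kills the boundary contribution. The first piece gives $(W,\nabla\n Du)$ by symmetry of $\Pi$; applying the $(\L_{0}^{*})^{-1}$ duality trick above with $Z_{1}=(\L_{0}^{*})^{-1}(\nabla\n Du)$ yields an analogous bulk expression $-(V,\cc Z_{1})+(v,e^{-\g\kappa}\rho\<Z_{1},\nabla\n X\>)$. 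The third piece is $\phi\,c(u)$, and the crux is to express $\phi$ as a bulk functional of $(V,v)$.

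For that, I would split $\phi = -\int_{\Ga_{-\varepsilon}}\<\nabla\n W,Q\nu\>d\Ga - \int_{\Ga_{-\varepsilon}}\<D\varrho_{W-V},\nu\>d\Ga$ (the latter using $\varrho_{W-V}|_{\Ga_{-\varepsilon}}=0$). Green's identity on the pair $(\varrho_{W-V},\varrho_{0})$, with $\Delta_{g}\varrho_{0}=0$ and the Dirichlet data of (\ref{5.25}), converts the second boundary integral into $\int \eta\varrho_{0}\<W-V,QX\>dg$. For the first, since $\<W,QX\>|_{\Ga_{-\varepsilon}}=0$, formula (\ref{2.4}) forces $W = (\<W,\nu\>/\<X,\nu\>)X$ on $\Ga_{-\varepsilon}$, so $\<\nabla\n W,Q\nu\>=(\Pi(X,Q\nu)/\<X,\nu\>)\<W,\nu\> = \pounds_{1}\<W_{0},\nu\>\<W,\nu\>$ on $\Ga_{-\varepsilon}$ by the boundary value of $W_{0}$ in (\ref{5.23}). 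Applying (\ref{2.14}) to $(W,W_{0})$ — where all four boundary contributions vanish except exactly this one, thanks to the matched conditions $\<W,\nu\>|_{\Ga_{b_{1}}}=\<W,QX\>|_{\Ga_{-\varepsilon}}=0$ and $\<W_{0},QX\>|_{\Ga_{b_{1}}}=0$ — and substituting $\L_{0}^{*}W_{0}=\eta\varrho_{0}QX$ and $\L_{0}W=e^{-\g\kappa}\rho v\nabla\n X - \cc V$, one obtains after combining with the Green identity
$$\phi = (V,\,\cc W_{0}+\eta\varrho_{0}QX)_{\LL^{2}} - \int e^{-\g\kappa}\rho v\<W_{0},\nabla\n X\>dg.$$

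Collecting everything, $((W,w),(U,u))_{*} = (V,U')_{\LL^{2}} + (v,g)_{\LL^{2}}$ with
$$U' = -\cc(Z+Z_{1}) + c(u)(\cc W_{0}+\eta\varrho_{0}QX),\qquad g = e^{-\g\kappa}\rho\<Z+Z_{1},\nabla\n X\> - c(u)\,e^{-\g\kappa}\rho\<W_{0},\nabla\n X\>,$$
where $Z+Z_{1}=(\L_{0}^{*})^{-1}(U+\nabla\n Du)$. Representing $(v,g)_{\LL^{2}}=(Dv,Du')_{\LL^{2}}$ via the Neumann problem $-\Delta_{g}u'=g$ with $\<Du',\nu\>|_{\pl\Om_{-\varepsilon}}=0$ and $\int u'dg = 0$ (the compatibility $\int g\,dg=0$ is automatic when paired against $v\in \dot{\WW}^{1,2}$), one reads off $\A^{*}(U,u)=(U',u')$. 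Imposing $(U,u)=\A^{*}(U,u)$ recovers the stated system (\ref{5.27*}), and reversibility of each step gives the converse. The principal obstacle is the dualization of the nonlocal quantity $\phi$; this is precisely why the auxiliary functions $W_{0}$ and $\varrho_{0}$ were set up in (\ref{5.23})--(\ref{5.25}) with their specific boundary data, so that the boundary integrals on $\Ga_{-\varepsilon}$ collapse cleanly into bulk pairings with $V$ and $v$.
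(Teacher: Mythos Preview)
Your proposal is correct and follows essentially the same route as the paper's proof: the paper computes $\phi$ via $W_0$ and $\varrho_0$ exactly as you do (your formula for $\phi$ is the paper's (5.26)), kills the $QD\varrho_{W-V}$ term the same way, and moves $W$ across by the adjoint relation between $\L_0^{-1}$ and $(\L_0^*)^{-1}$. The only cosmetic difference is that the paper applies $(\L_0^*)^{-1}$ to $U+\nabla\n Du$ in one step rather than defining $Z$ and $Z_1$ separately, and it leaves the final Neumann identification implicit (it integrates $(Dv,Du)$ by parts at the outset and then matches the $(v,\cdot)$ and boundary pairings directly).
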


\begin{proof}
Let $W$ solve problem (\ref{5.20}). It follows from the formula $\<X,\nu\>W=\<W,QX\>Q\nu+\<W,\nu\>X$ that
$$\<\nabla\n W,Q\nu\>=\frac{\Pi(X,Q\nu)}{\<X,\nu\>}\<W,\nu\>\qfq x\in\Ga_{-\varepsilon}.$$ By (\ref{2.14}) we obtain
\beq\int_{\Ga_{-\varepsilon}}\<\nabla\n W,Q\nu\>d\Ga&&=\int_{\Ga_{-\varepsilon}}\pounds_1\<W,\nu\>\<W_0,\nu\>d\Ga\nonumber\\
&&=(W_0,\L_0W)_{\LL^2(\Om_{-\varepsilon},T)}-(W,\L_0^*W_0)_{\LL^2(\Om_{-\varepsilon},T)}.\label{5.21}\eeq
In addition, from (\ref{5.25}) we have
\be\int_{\Ga_{-\varepsilon}}\<D\varrho_{W-V},\nu\>d\Ga=\int_{\Om_{-\varepsilon}}\div_g\varrho_0D\varrho_{W-V}dg=\int_{\Om_{-\varepsilon}}\varrho_0\eta\<W-V,QX\>dg.\label{5.22}\ee
It follows from (\ref{5.20})-(\ref{5.22}) that
\beq\phi&&=\int_{\Om_{-\varepsilon}}[\eta\varrho_{0}\<V,QX\>+\<V, \cc W_0\>-e^{-\g\kappa}\rho v\<W_0,\nabla\n X\>]dg\nonumber\\
&&=(V,\cc W_0+\eta\varrho_0QX)_{\LL^2(\Om_{-\varepsilon},T)}+(v,-e^{-\g\kappa}\rho\<W_0,\nabla\n X\>)_{\LL^2(\Om_{-\varepsilon})}.\label{5.26}\eeq

 For given $(V,v),$ noting that
$$\int_{\Om_{-\varepsilon}}\<QD\varrho_{W-V},Du\>dg=-\int_{\Om_{-\varepsilon}}\div_g(\varrho_{W-V}QDu)dg=0,$$ using (\ref{5.19}), (\ref{5.20}), and (\ref{5.26}), we have
\beq&&(V,U)_{\LL^2(\Om_{-\varepsilon},T)}+(v,-\Delta_gu)_{\LL^2(\Om_{-\varepsilon})}+\int_{\pl\Om_{-\varepsilon}}v\<Du,\nu\>d\Ga
=\Big((V,v),(U,u)\Big)_{\LL^2(\Om_{-\varepsilon},T)\times\dot{\WW}^{1,2}(\Om_{-\varepsilon})}\nonumber\\
&&=\Big(\A(V,v),(U,u)\Big)_{\LL^2(\Om_{-\varepsilon},T)\times\dot{\WW}^{1,2}(\Om_{-\varepsilon})}
=(W,U)_{\LL^2(\Om_{-\varepsilon},T)}+(Dw,Du)_{\LL^2(\Om_{-\varepsilon},T)}\nonumber\\
&&=(W,U+\nabla\n Du)_{\LL^2(\Om_{-\varepsilon},T)}+(\phi\nabla\n H_0,Du)_{\LL^2(\Om_{-\varepsilon},T)}\nonumber\\
&&=\Big((\L_0)^{-1}(-\cc V+e^{-\g\kappa}\rho v\nabla\n X),U+\nabla\n Du\Big)_{\LL^2(\Om_{-\varepsilon},T)}+(\phi\nabla\n H_0,Du)_{\LL^2(\Om_{-\varepsilon},T)}\nonumber\\
&&=\Big(V,\,\,-\cc(\L_0^*)^{-1}(U+\nabla\n Du)+c(u)(\cc W_0+\eta\varrho_0QX)\Big)_{\LL^2(\Om_{-\varepsilon},T)}\nonumber\\
&&\quad+\Big(v,\,\,e^{-\g\kappa}\rho \<(\L_0^*)^{-1}(U+\nabla\n Du),\nabla\n X\>-e^{-\g\kappa}c(u)\rho\<W_0,\nabla\n X\>\Big)_{\LL^2(\Om_{-\varepsilon})},\nonumber\eeq
which yield (\ref{5.27*}).
\end{proof}

By Theorem \ref{tt4.1} and a similar argument as in the proof of Proposition \ref{p5.1}, we have the following. The details are omitted.
\begin{pro}\label{p5.3} Let $S$ be of $\CC^5.$  Let $\U$ consist of all $(Z,z)\in\LL^2(\Om_{-\varepsilon},T)\times\dot{\WW}^{1,2}(\Om_{-\varepsilon})$ satisfying
\be\begin{cases}\L_0^*Z+\cc Z=\nabla\n Dz,\\
-\Delta_gz=e^{-\g\kappa}\rho\<Z,\nabla\n X\>,\\
\<Z,\nu\>|_{\Ga_{-\varepsilon}}=\<Z,QX\>|_{\Ga_{b_1}}=0,\\
\<Dz,\nu\>|_{\Ga_{-\varepsilon}\cup\Ga_{b_1}}=0,\quad\int_{\Om_{-\varepsilon}}zdg=0.
\end{cases}\label{5.32}\ee
Then $\dim\U<\infty$ and
$$\<Z,\nu\>|_{\Ga_{b_1}}\not=0.$$
\end{pro}

Set
$$\W^0(\Om_{-\varepsilon},T)=\{\,F\in\LL^2(\Om_{-\varepsilon},T)\,|\,\div_gQF\in\LL^2(\Om_{-\varepsilon})\,\},$$ with the norm
$$\|F\|^2_{\W^0(\Om_{-\varepsilon},T)}=\|F\|^2_{\LL^2(\Om_{-\varepsilon},T)}+\|\div_gQF\|^2_{\LL^2(\Om_{-\varepsilon})},$$ and
$$\H^0=\W^0(\Om_{-\varepsilon},T)\times\LL^2(\Om_{-\varepsilon})\times\LL^2(\Ga_{-\varepsilon})\times\LL^2(\Ga_{b_1}).$$
By a similar argument as for (\ref{5.19}), we have the following.
\begin{lem} Let $H_0$ be given in Proposition $\ref{p5.2}.$ For given $(F,f,p,q)\in\H_0,$ problem
\be\begin{cases}
Dv=\nabla\n(V+\b(F,f,p,q) H_0)+QD\varrho_V+F,\\
\div_g(V+\b(F,f,p,q) H_0)=\eta\<V,\nabla\n X\>+f,\\
\int_{\Om_{-\varepsilon}}vdg=0,\quad\<V,QX\>|_{\Ga_{-\varepsilon}}=p,\quad\<V,\nu\>|_{\Ga_{b_1}}=q
\end{cases}\label{5.29}\ee admits a unique solution $(V,v)\in\LL^2(\Om_{-\varepsilon},T)\times\dot{\WW}^{1,2}(\Om_{-\varepsilon}),$ where $\varrho_V$ is given by $(\ref{5.27})$ and
$$\b(F,f,p,q)=-\int_{\Ga_{-\varepsilon}}\<\nabla\n V+QD\varrho_{V}+F,Q\nu\>d\Ga.$$
\end{lem}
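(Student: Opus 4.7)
The plan is to recast $(\ref{5.29})$ as an equation of the form $(\id-\A)(V,v)=\Psi(F,f,p,q)$ on $\LL^2(\Om_{-\varepsilon},T)\times\dot{\WW}^{1,2}(\Om_{-\varepsilon})$ and then apply Fredholm's alternative to the compact operator $\A$ constructed in $(\ref{n5.23})$. First I would absorb the boundary data $p,q$ and the forcing $(F,f)$ into an inhomogeneous analogue of the construction of $\A$: use Theorem $\ref{t2.1}$ to lift the traces $p$ on $\Ga_{-\varepsilon}$ and $q$ on $\Ga_{b_1}$ by an auxiliary tensor field, then modify the right-hand side of $(\ref{5.20})$ and the definition of $\varrho_V$ in $(\ref{5.27})$ by the parts coming from $(F,f)$. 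A pair $(V,v)$ solves $(\ref{5.29})$ precisely when it is a fixed point of the resulting affine map; equivalently, when $(\id-\A)(V,v)=\Psi(F,f,p,q)$ for the data $\Psi(F,f,p,q)$ assembled from this lift and from the term $\b H_0$ in $(\ref{5.29})$.

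Since $\A$ is compact, Fredholm's alternative reduces solvability to the orthogonality condition $\Psi(F,f,p,q)\perp\Z(\id-\A^*)$. The preceding lemma identifies $\Z(\id-\A^*)$ as the set of pairs $(U,u)$ solving $(\ref{5.27*})$, in which the constant $c(u)=(\nabla\n H_0,Du)_{\LL^2}$ singles out one scalar degree of freedom coupled to the field $H_0$ of Proposition $\ref{p5.2}$. When $c(u)=0$ the system $(\ref{5.27*})$ reduces to $(\ref{5.32})$, giving exactly the finite-dimensional space $\U$ of Proposition $\ref{p5.3}$; when $c(u)\ne 0$ the extra direction is carried by $H_0$. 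The role of the scalar $\b(F,f,p,q)$, which enters $\Psi$ linearly through $\b H_0$, is to cancel the component of the data along this last direction. Because of the normalization $\int_{\Ga_{-\varepsilon}}\<\nabla\n H_0,Q\nu\>d\Ga=1$ and the non-degeneracy $\<Z,\nu\>|_{\Ga_{b_1}}\ne 0$ for $(Z,z)\in\U$ from Proposition $\ref{p5.3}$, the orthogonality pairing depends on $\b$ through a nontrivial linear factor, and the prescribed formula for $\b(F,f,p,q)$ is precisely the one that makes the pairing vanish on every element of $\Z(\id-\A^*)$.

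The main obstacle is the bookkeeping required to write the pairing $\big(\Psi(F,f,p,q),(U,u)\big)_{\LL^2}$ in closed form: one must apply the Green-type identity $(\ref{2.14})$ together with $(\ref{5.20})$, $(\ref{5.19})$, and $(\ref{5.27})$ to reduce it to the sum of the boundary integrals produced by $(p,q)$ and a single scalar multiple of $\b$ coming from $H_0$. Once that reduction is in hand, the compatibility condition becomes a $1\times 1$ linear equation for $\b$ with nonzero leading coefficient, so $\b(F,f,p,q)$ is uniquely determined; combined with the zero-mean constraint $\int_{\Om_{-\varepsilon}}v\,dg=0$ and the characterization of $\Z(\id-\A)$ (obtained by the dual argument underlying Proposition $\ref{p5.1}$), this yields both existence and uniqueness of $(V,v)\in\LL^2(\Om_{-\varepsilon},T)\times\dot{\WW}^{1,2}(\Om_{-\varepsilon})$.
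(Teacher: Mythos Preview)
Your approach misreads the structure of problem $(\ref{5.29})$. The system is \emph{decoupled}, not of fixed-point type: since $\div_gH_0=0$, the second line reads $\div_gV-\eta\<V,\nabla\n X\>=f$, and applying $\div_gQ$ to the first line together with $(\ref{5.27})$ and $\div_gQ\nabla\n H_0=0$ gives $\div_gQ\nabla\n V-\eta\<V,QX\>=-\div_gQF$. These two scalar equations, with the prescribed traces $p,q$, are exactly the $\L_0$-problem $(\ref{3.1})$ with right-hand side $\G(F,f)=e^{-\gamma\kappa}[(-\div_gQF)QX+f\nabla\n X]$; Theorem~\ref{t2.1} yields a unique $V\in\LL^2(\Om_{-\varepsilon},T)$. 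Note that $v$ does not appear anywhere in the equations determining $V$, so there is no coupling on which to base the operator $\A$ of $(\ref{n5.23})$, and your reformulation $(\id-\A)(V,v)=\Psi$ cannot be set up here. The Fredholm machinery with $\A$ and $\Z(\id-\A^*)$ belongs to the proof of Theorem~\ref{t5.1} (problem $(\ref{5.31})$), where the term $\rho v$ in $(\ref{5.20})$ creates genuine coupling; the present lemma is precisely the preliminary step that manufactures the inhomogeneous datum $(V_0,v_0)$ used there.

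Consequently your interpretation of $\b(F,f,p,q)$ is also off. Once $V$ is known, the right-hand side of the first equation in $(\ref{5.29})$ is a closed $1$-form (the computation above shows $\div_gQ$ of it vanishes), and $\b$ is chosen so that its period over the closed curve $\Ga_{-\varepsilon}$ vanishes, using the normalization $\int_{\Ga_{-\varepsilon}}\<\nabla\n H_0,Q\nu\>d\Ga=1$ from Proposition~\ref{p5.2}. By the de Rham argument already carried out for $(\ref{5.28})$--$(\ref{5.19})$, this exactness yields a unique $v\in\dot\WW^{1,2}(\Om_{-\varepsilon})$. That is the whole proof the paper intends (``by a similar argument as for $(\ref{5.19})$''); no Fredholm compatibility condition, no pairing with $\Z(\id-\A^*)$, and no appeal to Proposition~\ref{p5.3} is involved at this stage.
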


Consider problem
\be\left\{\begin{array}{l}Dv=\nabla\n V+F\qfq x\in\Om_{-\varepsilon},\\
\div_gV=\rho v+f\qfq x\in\Om_{-\varepsilon},\\
\<V,QX\>|_{\Ga_{-\varepsilon}}=p,\quad\<V,\nu\>|_{\Ga_{b_1}}=q,\quad
\int_{\Om_{-\varepsilon}}vdg=0,\end{array}\right.\label{n5.33}\ee where $\rho=-\tr_g\Pi.$ Let  $\N_0$ consist of  solutions to
$$\left\{\begin{array}{l}Dv=\nabla\n V\qfq x\in\Om_{-\varepsilon},\\
\div_gV=\rho v\qfq x\in\Om_{-\varepsilon},\\
\<V,QX\>|_{\Ga_{-\varepsilon}}=\<V,\nu\>|_{\Ga_{b_1}}=\int_{\Om_{-\varepsilon}}vdg=0.\end{array}\right.$$ By Theorems \ref{tt4.1} and \ref{t3.1}
$$\N_0\subset\WW^{m+4,2}(\Om_{-\varepsilon},T)\times\WW^{m+5,2}(\Om_{-\varepsilon})$$ if $S$ is of $\CC^{m+5}.$
Set
\be\W^m(\Om_{-\varepsilon},T)=\{\,F\in\WW^{m,2}(\Om_{-\varepsilon},T)\,|\,\div_gQF\in\WW^{m,2}(\Om_{-\varepsilon})\,\},\label{4.16xx}\ee
$$\|F\|^2_{\W^m(\Om_{-\varepsilon},T)}=\|F\|^2_{\WW^{m,2}(\Om_{-\varepsilon},T)}+\|\div_gQF\|^2_{\WW^{m,2}(\Om_{-\varepsilon})}.$$

\begin{thm}\label{t5.1} Let $S$ be of $\CC^{m+5}.$  Let $(F,f,p)\in\W^0(\Om_{-\varepsilon},T)\times\LL^2(\Om_{-\varepsilon})\times\LL^2(\Ga_{-\varepsilon})$ be given. Then there exists
$$q\in\WW^{m+9/2,2}(\Ga_{b_1})$$ such that problem $(\ref{n5.33})$ admits a unique  solution $\N_0^\bot\cap[\LL^2(\Om_{-\varepsilon},T)\times\WW^{1,2}(\Om_{-\varepsilon})].$ Moreover, if
$(F,f,p)\in\W^m(\Om_{-\varepsilon},T)\times\WW^{m,2}(\Om_{-\varepsilon})\times\WW^{m,2}(\Ga_{-\varepsilon}),$ then
\be(V,v,V|_{\Ga_{-\varepsilon}})\in\WW^{m,2}(\Om_{-\varepsilon},T)\times\WW^{m+1,2}(\Om_{-\varepsilon})\times\WW^{m,2}(\Ga_{-\varepsilon},T).\label{n5.35}\ee
\end{thm}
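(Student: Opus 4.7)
The plan is to apply the Fredholm alternative to the compact operator $\A$ constructed in (\ref{n5.23}), identifying the finite-dimensional cokernel via Proposition \ref{p5.3}, and then selecting the boundary datum $q$ on $\Ga_{b_1}$ so as to annihilate the resulting compatibility obstructions.

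First, I reformulate (\ref{n5.33}) as a Fredholm equation for $(V,v)$ in $\LL^2(\Om_{-\varepsilon},T)\times\dot{\WW}^{1,2}(\Om_{-\varepsilon})$. Using the identity $\div_g QDv=0$ (which expresses $*dv$ being co-closed for any function $v$) together with $Dv=\nabla\n V+F$, one obtains $\div_g Q\nabla\n V=-\div_g QF$, which combined with $\div_g V=\rho v+f$ yields
$$\L_0 V+\cc V=e^{-\g\kappa}\bigl[(-\div_g QF)QX+(\rho v+f)\nabla\n X\bigr].$$
Matching this with the definition of $\A$, together with the auxiliary lifting $H_0$ from Proposition \ref{p5.2} and the Poisson potential $\varrho_V$ of (\ref{5.27}) used to absorb the flux condition on $\Ga_{-\varepsilon}$ and the inhomogeneous boundary data, the system (\ref{n5.33}) is converted to
$$(I-\A)(V,v)=\Psi(F,f,p,q),$$
where $\Psi$ is a bounded affine map of the data, linear in $q$.

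Second, since $\A$ is compact, $(I-\A)$ is Fredholm of index zero and solvability of the reduced equation is equivalent to $\Psi(F,f,p,q)\perp\Z(I-\A^\ast)=\U$, the finite-dimensional space of Proposition \ref{p5.3}. Fixing a basis $\{(Z_i,z_i)\}_{i=1}^{N}$ of $\U$, the compatibility conditions form $N$ linear scalar relations in the data, in which $q$ enters through a boundary pairing with $\pounds_1\<Z_i,\nu\>|_{\Ga_{b_1}}$ (obtained from the integration-by-parts identity (\ref{2.14})). Proposition \ref{p5.3} guarantees $\<Z,\nu\>|_{\Ga_{b_1}}\neq 0$ for every nonzero $Z\in\U$; a Gram--Schmidt refinement within $\U$ then renders the traces $\<Z_i,\nu\>|_{\Ga_{b_1}}$ linearly independent in $\LL^2(\Ga_{b_1})$. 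Bootstrapping the coupled system (\ref{5.32}) by repeated application of Theorem \ref{tt4.2} and elliptic regularity for the Poisson equation gives, under $S\in\CC^{m+5}$, that $Z_i\in\WW^{m+5,2}(\Om_{-\varepsilon},T)$, so the traces lie in $\WW^{m+9/2,2}(\Ga_{b_1})$; selecting $q$ in their $N$-dimensional span furnishes the required $q\in\WW^{m+9/2,2}(\Ga_{b_1})$ annihilating all $N$ obstructions.

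Third, uniqueness on $\N_0^\perp$ is immediate, since any two $\LL^2\times\WW^{1,2}$ solutions differ by an element of $\N_0$, removed by orthogonal projection. For the regularity claim (\ref{n5.35}), I bootstrap through the displayed tensor equation: given $(F,f,p)\in\W^m(\Om_{-\varepsilon},T)\times\WW^{m,2}(\Om_{-\varepsilon})\times\WW^{m,2}(\Ga_{-\varepsilon})$, Theorem \ref{tt4.1} upgrades $V\in\LL^2$ to $V\in\WW^{m,2}(\Om_{-\varepsilon},T)$; the identity $Dv=\nabla\n V+F$ then gives $v\in\WW^{m+1,2}(\Om_{-\varepsilon})$; and the trace $V|_{\Ga_{-\varepsilon}}\in\WW^{m,2}(\Ga_{-\varepsilon},T)$ is recovered from the tangential datum $\<V,QX\>|_{\Ga_{-\varepsilon}}=p$ together with the normal component extracted from $Dv|_{\Ga_{-\varepsilon}}=\nabla\n V|_{\Ga_{-\varepsilon}}+F|_{\Ga_{-\varepsilon}}$. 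The main obstacle throughout is the second step: verifying that the $N$ boundary traces $\<Z_i,\nu\>|_{\Ga_{b_1}}$ are linearly independent and lie in the sharp space $\WW^{m+9/2,2}(\Ga_{b_1})$, so that the single datum $q$ suffices to resolve every compatibility obstruction at the claimed regularity --- Proposition \ref{p5.3} supplies the crucial non-degeneracy $\<Z,\nu\>|_{\Ga_{b_1}}\neq 0$, but propagating it sharply along the Sobolev scale requires the bootstrap through the coupled mixed-type/elliptic system (\ref{5.32}).
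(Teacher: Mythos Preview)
Your plan is essentially the paper's: reduce (\ref{n5.33}) to the Fredholm equation $(I-\A)(V,v)=(V_0,v_0)$, invoke Proposition \ref{p5.3} for non-degeneracy of the cokernel traces on $\Ga_{b_1}$, and then take $q$ in the span of those traces to kill the finitely many obstructions, with regularity following from Theorem \ref{tt4.1}. One refinement you elide: the identification of $\Z(I-\A^\ast)$ with $\U$ is not direct but goes through the substitution $Z=(\L_0^\ast)^{-1}(U+\nabla\n Du)$, $z=u$, together with a shift by $c(z)W_0$ (the $W_0$ of (\ref{5.23})), so that it is $(Z-c(z)W_0,z)$ which lies in $\U$ and the pairing (\ref{5.40}) couples $q$ to $\pounds_1\<Z-c(z)W_0,\nu\>|_{\Ga_{b_1}}$ rather than $\pounds_1\<Z,\nu\>$; moreover the final $q$ acquires an extra $\b_0\<H_0,\nu\>$ term when passing from the $(I-\A)$-solution back to an exact solution of (\ref{n5.33}).
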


\begin{proof} {\bf Step 1.}\,\,\,Let $(F,f,p,q)\in\H_0$ be given. We solve problem (\ref{5.29}) to have the solution $(V_0,v_0)\in\LL^2(\Om_{-\varepsilon},T)\times\dot{\WW}^{1,2}(\Om_{-\varepsilon}).$ Then
$$\L_0V_0=\G(F,f),\quad \<V_0,QX\>|_{\Ga_{-\varepsilon}}=p,\quad \<V_0,\nu\>|_{\Ga_{b_1}}=q,$$ where  $\G(F,f)=e^{-\g\kappa}[(-\div_gQF)QX+f\nabla\n X].$

We compute $\b(F,f,p,q)$ as follows. Let $W_0$ be given in (\ref{5.23}). Noting that $\<X,\nu\>V_0=\<V_0,QX\>Q\nu+\<V_0,\nu\>X,$ we have, by (\ref{5.23}) and (\ref{2.14}),
\beq\int_{\Ga_{-\varepsilon}}\<\nabla\n V_0,Q\nu\>d\Ga&&=\int_{\Ga_{-\varepsilon}}\frac1{\<X,\nu\>}[p\Pi(Q\nu, Q\nu)+\Pi(X,Q\nu)\<V_0,\nu\>]d\Ga\nonumber\\
&&=\int_{\Ga_{-\varepsilon}}(e^{\g\kappa}\pounds_2p+\pounds_1\<V_0,\nu\>\<W_0,\nu\>)d\Ga\nonumber\\
&&=\Big(p,\pounds_2(e^{\g\kappa}+\<W_0,QX\>)\Big)_{\LL^2(\Ga_{-\varepsilon})}-\Big(q,\pounds_1\<W_0,\nu\>\Big)_{\LL^2(\Ga_{b_1})}\nonumber\\
&&\quad+(W_0,\L_0V_0)_{\LL^2(\Om_{-\varepsilon},T)}-(V_0,\L_0^*W_0)_{\LL^2(\Om_{-\varepsilon},T)},\nonumber\eeq
$$\int_{\Ga_{-\varepsilon}}\<D\varrho_{V_0},\nu\>d\Ga=\int_{\Om_{-\varepsilon}}\div_g\varrho_0D\varrho_{V_0}dg=\int_{\Om_{-\varepsilon}}\varrho_0\eta\<V_0,QX\>dg.$$
Noting that
$$\int_{\Ga_{-\varepsilon}}\<F,Q\nu\>d\Ga=-\int_{\Om_{-\varepsilon}}\div_g\varrho_0QFdg,$$ we obtain
\beq-\b(F,f,p,q)&&=\Big(p,\pounds_2(e^{\g\kappa}+\<W_0,QX\>)\Big)_{\LL^2(\Ga_{-\varepsilon})}-\Big(q,\pounds_1\<W_0,\nu\>\Big)_{\LL^2(\Ga_{b_1})}\nonumber\\
&&\quad+\Big(W_0,\G(F,f)\Big)_{\LL^2(\Om_{-\varepsilon},T)}-(\div_gQF,\varrho_0)_{\LL^2(\Om_{-\varepsilon})}\nonumber\\
&&\quad+(F,QD\varrho_0)_{\LL^2(\Om_{-\varepsilon},T)}.\label{5.34}\eeq

{\bf Step 2.}\,\,\,Consider problem
\be(\id-\A)(V,v)=(V_0, v_0)\qiq \LL^2(\Om_{-\varepsilon},T)\times\dot{\WW}^{1,2}(\Om_{-\varepsilon}),\label{5.31}\ee where $\A$ is defined by (\ref{n5.23}).
By the Fredholm theorem  problem (\ref{5.31}) admits a solution in $\LL^2(\Om_{-\varepsilon},T)\times\dot{\WW}^{1,2}(\Om_{-\varepsilon})$ if and only if
\be\Big((V_0, v_0),(U,u)\Big)_{\LL^2(\Om_{-\varepsilon},T)\times\dot{\WW}^{1,2}(\Om_{-\varepsilon})}=0\qfq (U,u)\in\Z(\id-\A^*).\label{5.36}\ee

Set
\be Z=(\L_0^*)^{-1}(U+\nabla\n Du),\quad z=u\qfq (U,u)\in\Z(\id-\A^*).\label{n5.39}\ee Then $(Z-c(z)W_0,z)$ satisfies (\ref{5.32}) and $(Z-c(z)W_0,z)\in\U.$
Using (\ref{5.29}), (\ref{5.34}) and (\ref{2.14}), we have
\beq&&\Big((V_0,v_0),(U,u)\Big)_{\LL^2(\Om_{-\varepsilon},T)\times\dot{\WW}^{1,2}(\Om_{-\varepsilon})}=\Big(V_0,U\Big)_{\LL^2(\Om_{-\varepsilon},T)}+(Dv_0,Du)_{\LL^2(\Om_{-\varepsilon},T)}\nonumber\\
&&=\Big(V_0,U+\nabla\n Du\Big)_{\LL^2(\Om_{-\varepsilon},T)}+\b(F,f,p,q)c(u)+(F,Du)_{\LL^2(\Om_{-\varepsilon},T)}\nonumber\\
&&=\Big(V_0,\L_0^*Z\Big)_{\LL^2(\Om_{-\varepsilon},T)}+\b(F,f,p,q)c(z)+(F,Dz)_{\LL^2(\Om_{-\varepsilon},T)}\nonumber\\
&&=\Big(\G(F,f),Z-c(z)W_0\Big)_{\LL^2(\Om_{-\varepsilon},T)}+\Big(p,\pounds_2(e^{\g\kappa}c(z)+\<Z-c(z)W_0,QX\>)\Big)_{\LL^2(\Ga_{-\varepsilon})}\nonumber\\
&&\quad-\Big(q,\pounds_1\<Z-c(z)W_0,\nu\>\Big)_{\LL^2(\Ga_{b_1})}+\Big(\div_gQF,c(z)\varrho_0\Big)_{\LL^2(\Om_{-\varepsilon})}+\Big(F,Dz-c(z)QD\varrho_0\Big)_{\LL^2(\Om_{-\varepsilon},T)}\nonumber\\
&&=I(U,u)-\Big(q,\pounds_1\<Z-c(z)W_0,\nu\>\Big)_{\LL^2(\Ga_{b_1})},\label{5.40}\eeq where \beq I(U,u)&&=\Big(\G(F,f),Z-c(z)W_0\Big)_{\LL^2(\Om_{-\varepsilon},T)}+\Big(p,\pounds_2(e^{\g\kappa}c(z)+\<Z-c(z)W_0,QX\>)\Big)_{\LL^2(\Ga_{-\varepsilon})}\nonumber\\
&&\quad+\Big(\div_gQF,c(z)\varrho_0\Big)_{\LL^2(\Om_{-\varepsilon})}+\Big(F,Dz-c(z)QD\varrho_0\Big)_{\LL^2(\Om_{-\varepsilon},T)}.\nonumber\eeq

Set
$$\X=\{\,\<Z-c(z)W_0,\nu\>|_{\Ga_{b_1}}\,|\,Z=(\L_0^*)^{-1}(U+\nabla\n Du),\,\,z=u,\,\,(U,u)\in\Z(\id-\A^*)\,\}.$$ By Proposition \ref{p5.3}
$$\dim\Z(\id-\A^*)=\dim\X.$$
Let $\{\,\<Z_i-c(z_i)W_0,\nu\>\,\}_{i=1}^k$ be a basis of $\X$ such that
$$\Big(\<Z_i-c(z_i)W_0,\nu\>,\pounds_1\<Z_j-c(z_j)W_0,\nu\>\Big)_{\Ga_{b_1}}=\delta_{ij}\qfq1\leq i,\,j\leq k.$$
For given $(F,f,p)\in\W^0(\Om_{-\varepsilon},T)\times\LL^2(\Om_{-\varepsilon})\times\LL^2(\Ga_{-\varepsilon}),$ let
$$q=\sum_{i=1}^kc_i\<Z_i-c(z_i)W_0,\nu\>|_{\Ga_{b_1}},$$ where
$$c_i=\Big((V_0,v_0),(U_i,u_i)\Big)_{\LL^2(\Om_{-\varepsilon},T)\times\dot{\WW}^{1,2}(\Om_{-\varepsilon})}-I(U_i,u_i),$$ and $(U_i,z_i)\in\Z(\id-\A^*)$ such that
$$Z_i=(\L_0^*)^{-1}(U_i+\nabla\n Du_i),\,\,z_i=u_i\qfq1\leq i\leq k.$$ Then (\ref{5.36}) hold true. Thus problem (\ref{5.31}) admits a unique solution $(V,v)\in\LL^2(\Om_{-\varepsilon},T)\times\dot{\WW}^{1,2}(\Om_{-\varepsilon}),$ which implies that
$$(V+\b_0H_0,v)\in \LL^2(\Om_{-\varepsilon},T)\times\dot{\WW}^{1,2}(\Om_{-\varepsilon})$$ is a solution to problem (\ref{n5.33}) with
$$q=q_0+\b_0\<H_0,\nu\>\in\WW^{m+9/2,2}(\Ga_{b_1}),$$ where
$$\b_0=-\int_{\Ga_{-\varepsilon}}\<\nabla\n V+F,Q\nu\>d\Ga.$$

Finally, (\ref{n5.35}) follows from Theorems \ref{tt4.1} and \ref{t3.1}.
\end{proof}

Let $W_0$ solve problem (\ref{5.23}). We define
$$\aleph=\{\,(\pounds_2(e^{\g\kappa}c(z)+\<Z-c(z)W_0,QX\>),\,\pounds_1\<Z-c(z)W_0,\nu\>)\,|$$
\be(Z,z)\,\mbox{is given in (\ref{n5.39})}\,\}.\label{n5.41x}\ee By Theorem Theorems \ref{tt4.1} and \ref{t3.1}
$$\aleph\subset\WW^{m+9/2,2}(\Ga_{-\varepsilon})\times\WW^{m+9/2,2}(\Ga_{b_1})$$ if $S$ is of $\CC^{m+5}.$

Consider problem
\be\left\{\begin{array}{l}Dv=\nabla\n V\qfq x\in\Om_{-\varepsilon},\\
\div_gV=\rho v\qfq x\in\Om_{-\varepsilon},\\
\<V,QX\>|_{\Ga_{-\varepsilon}}=p,\quad\<V,\nu\>|_{\Ga_{b_1}}=q.\end{array}\right.\label{5.41}\ee Let  $\N_1$ consist of  solutions to
\be\left\{\begin{array}{l}Dv=\nabla\n V\qfq x\in\Om_{-\varepsilon},\\
\div_gV=\rho v\qfq x\in\Om_{-\varepsilon},\\
\<V,QX\>|_{\Ga_{-\varepsilon}}=\<V,\nu\>|_{\Ga_{b_1}}=0.\end{array}\right.\label{5.42x}\ee Then
$$\N_1\subset\WW^{m+4,2}(\Om_{-\varepsilon},T)\times\WW^{m+5,2}(\Om_{-\varepsilon})$$ if $S$ is of $\CC^{m+5}.$

\begin{thm}\label{t5.2} Let $S$ be of $\CC^{m+5}.$ Problem $(\ref{5.41})$ admits a unique solution
$$(V,v)\bot\N_1\qiq \LL^2(\Om_{-\varepsilon},T)\times\WW^{1,2}(\Om_{-\varepsilon})$$ if and only if
\be(p,q)\bot\aleph\qiq\LL^2(\Ga_{-\varepsilon})\times\LL^2(\Ga_{b_1}).\label{5.44}\ee Moreover, if $(p,q)\in\WW^{j,2}(\Ga_{-\varepsilon})\times\WW^{j,2}(\Ga_{b_1})$ with $(\ref{5.44}),$ then
$$(V,v)\in\WW^{j,2}(\Om_{-\varepsilon},T)\times\WW^{j+1,2}(\Om_{-\varepsilon})\qfq 0\leq j\leq m+4.$$
\end{thm}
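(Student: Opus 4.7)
The plan is to derive Theorem~\ref{t5.2} by extending the Fredholm framework already built in the proof of Theorem~\ref{t5.1}. The key observation is that $(\ref{5.44})$ is precisely the Fredholm compatibility condition for the homogeneous problem (the case $F=f=0$), whose left-hand side appears in $(\ref{5.40})$; when inhomogeneous bulk data are absent, the obstruction that previously forced $q$ into a finite-dimensional subspace of boundary data instead becomes a linear relation between $p$ and $q$, namely $(\ref{5.44})$.

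For the necessity direction, suppose $(V,v)$ solves $(\ref{5.41})$ and let $(Z,z)\in\U$ be arbitrary. Pair the two equations of $(\ref{5.41})$ against $Z$ and $z$ respectively, integrate by parts on $\Om_{-\varepsilon}$, and use the adjoint system $(\ref{5.32})$ together with the boundary conditions $\<Z,\nu\>|_{\Ga_{-\varepsilon}}=\<Z,QX\>|_{\Ga_{b_1}}=0$ and $\<Dz,\nu\>|_{\pl\Om_{-\varepsilon}}=0$ to cancel every bulk contribution and every boundary contribution not expressible through $p$ or $q$. Rewriting the surviving boundary terms via the decomposition $\<X,\nu\>V=\<V,QX\>Q\nu+\<V,\nu\>X$ on $\Ga_{-\varepsilon}$, and its analogue on $\Ga_{b_1}$, reproduces exactly the pairing of $(p,q)$ against the element of $\aleph$ associated with $(Z,z)$, in the spirit of $(\ref{5.40})$; this yields $(\ref{5.44})$.

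For the sufficiency direction, assume $(\ref{5.44})$ and apply the construction of $(\ref{5.29})$ with $F=f=0$ and the given $(p,q)$ to produce $(V_0,v_0)\in\LL^2(\Om_{-\varepsilon},T)\times\dot{\WW}^{1,2}(\Om_{-\varepsilon})$. Then solve the Fredholm equation $(\id-\A)(V,v)=(V_0,v_0)$ with $\A$ the compact operator defined in $(\ref{n5.23})$. By the Fredholm alternative and the identity $(\ref{5.40})$, solvability reduces to
\[
I(U,u)=\bigl(q,\,\pounds_1\<Z-c(z)W_0,\nu\>\bigr)_{\LL^2(\Ga_{b_1})}\qfq(U,u)\in\Z(\id-\A^*),
\]
where $(Z,z)$ is built from $(U,u)$ via $(\ref{n5.39})$ so that $(Z-c(z)W_0,z)\in\U$; since $F=f=0$, the functional $I(U,u)$ collapses to $\bigl(p,\,\pounds_2(e^{\g\kappa}c(z)+\<Z-c(z)W_0,QX\>)\bigr)_{\LL^2(\Ga_{-\varepsilon})}$, and this is exactly $(\ref{5.44})$ after accounting for the sign convention encoded in $\aleph$. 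The resulting $(V,v)$, after adding $\b(0,0,p,q)H_0$ to its first component, is a solution of $(\ref{5.41})$, and projection onto $\N_1^\bot$ delivers the unique one.

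For the Sobolev regularity claim, once $(V,v)\in\LL^2\times\WW^{1,2}$ is known, view $(\ref{5.41})$ as a tensor system of the form $(\ref{3.1})$ for $V$ whose forcing involves the already-controlled quantities $Dv$ and $\rho v$, and apply Theorem~\ref{t3.1} and Theorem~\ref{tt4.1} to raise $V$ by one derivative; the identity $Dv=\nabla\n V$ then raises $v$ by one derivative. Iterating reaches $\WW^{j,2}(\Om_{-\varepsilon},T)\times\WW^{j+1,2}(\Om_{-\varepsilon})$ for $0\leq j\leq m+4$, consistent with $S\in\CC^{m+5}$. I expect the main obstacle to be the bookkeeping of boundary terms in the necessity step: the correction $c(z)W_0$ built into the definition $(\ref{n5.41x})$ of $\aleph$ reflects the inhomogeneous normalization of $W_0$ in $(\ref{5.23})$, and one must verify that the extra contributions from $W_0$ and from the $\b H_0$ term cancel cleanly in the homogeneous case, producing $(\ref{5.44})$ without sign or normalization ambiguity.
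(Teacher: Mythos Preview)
Your sufficiency argument and regularity bootstrap are essentially what the paper does: once you set $F=f=0$ in the machinery of Theorem~\ref{t5.1}, the Fredholm condition from $(\ref{5.40})$ collapses to the boundary pairing defining $\aleph$, and Theorems~\ref{t3.1} and~\ref{tt4.1} give the iteration in regularity.

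Your necessity argument, however, takes a different route and is incomplete. You propose to pair $(V,v)$ directly against $(Z,z)\in\U$ and integrate by parts. The trouble is that the elements of $\aleph$ are not simply boundary traces of pairs in $\U$: the first component of an element of $\aleph$ contains the term $e^{\g\kappa}c(z)$, where $c(z)=(\nabla\n H_0,Dz)_{\LL^2(\Om_{-\varepsilon},T)}$ is a global integral, not a trace, and the correction $-c(z)W_0$ carries the inhomogeneous boundary value $\<W_0,\nu\>|_{\Ga_{-\varepsilon}}$ from $(\ref{5.23})$. These terms encode the $\b H_0$ and $\varrho_0$ contributions built into the operator $\A$, and a direct integration by parts against $\U$ will not reproduce them. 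You flag this as the expected obstacle, but you do not resolve it.

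The paper avoids this entirely with a single reduction that handles both directions at once. If $(V,v)$ solves $(\ref{5.41})$, set $\theta=(v,1)_{\LL^2(\Om_{-\varepsilon})}/m(\Om_{-\varepsilon})$; then $(V,v-\theta)$ solves $(\ref{n5.33})$ with $(F,f)=(0,\theta\rho)$. Conversely, any solution of $(\ref{n5.33})$ with such data yields a solution of $(\ref{5.41})$. Now apply the identity $(\ref{5.40})$ with this choice: the only bulk contribution to $I(U,u)$ is $\theta\bigl(\G(0,\rho),\,Z-c(z)W_0\bigr)_{\LL^2}$, and since $(Z-c(z)W_0,z)$ satisfies $(\ref{5.32})$, this equals $-\theta\int_{\Om_{-\varepsilon}}\Delta_g z\,dg$, which vanishes by the Neumann condition $\<Dz,\nu\>|_{\pl\Om_{-\varepsilon}}=0$. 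So the Fredholm condition is independent of $\theta$ and reduces exactly to $(p,q)\bot\aleph$. This is the step you are missing.
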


\begin{proof} Let $(V,v)$ be a solution to problem (\ref{5.41}). Then $(V, v-\theta)$ solves problem (\ref{n5.33}) with
\be(F,f,p,q)=(0,\theta\rho,p,q),\label{5.42}\ee where $\theta=(v,1)_{\LL^2(\Om_{-\varepsilon})}/m(\Om_{-\varepsilon}).$ Let ${\cal P}$ be the orthogonal projection operator from $\LL^2(\Ga_{-\varepsilon})\times\LL^2(\Ga_{b_1})$ on to $\aleph.$ By (\ref{5.40}) problem (\ref{n5.33}) admits a solution if and only
\beq&&\Big(p,\pounds_2(e^{\g\kappa}c(z)+\<Z-c(z)W_0,QX\>)\Big)_{\LL^2(\Ga_{-\varepsilon})}+\Big(q,\pounds_1\<Z-c(z)W_0,\nu\>\Big)_{\LL^2(\Ga_{b_1})}\nonumber\\
&&=-\theta\int_{\Om_{-\varepsilon}}\rho e^{-\g\kappa}\<Z-c(z)W_0,\nabla\n X\>dg=\theta\int_{\Om_{-\varepsilon}}\Delta_gzdg=0,\nonumber\eeq where $Z-c(z)W_0$ satisfies  (\ref{5.32}).
Thus the desired results follow.
\end{proof}

\setcounter{equation}{0}
\def\theequation{6.\arabic{equation}}
\section{Proofs of the Main Results }
\hskip\parindent {\bf Proof of Theorem \ref{t1.1}.} Let
$$U\in T^2_{\sym}S.$$ Consider problem
\be\left\{\begin{array}{l}Dv=\nabla\n V+F\qfq x\in S,\\
\div_gV+v\tr_g\Pi=f\qfq x\in S,\end{array}\right.\label{6.1}\ee
where $(V,v)\in\LL^2(S,T)\times\WW^{1,2}(S)$ is the unknown and
\be F=Q[D(\tr_g U)-\div_gU],\quad f=-\tr_gU(Q\nabla\n\cdot,\cdot)\qfq x\in S.\label{6.2}\ee

For $y\in\WW^{1,2}(S,\R^3),$ let
\be 2v=\nabla y(e_2,e_1)-\nabla y(e_1,e_2)\qfq x\in S,\label{6.3}\ee
\be V=(\nabla\n)^{-1}(Dv-F)\qfq x\in S,\label{6.4}\ee where $e_1,$ $e_2$ is an orthonormal basis of $T_xS$ with positive orientation.

 By \cite[Section 2]{Yao2017}, there is a $y\in\WW^{1,2}(S,\R^3)$ to solve problem (\ref{01}) if and only if $(V,v),$ being given in (\ref{6.4}) and (\ref{6.3}), respectively, solves problem (\ref{6.1}). In that case, we have
\be\left\{\begin{array}{l}\nabla_{e_1}y=U(e_1,e_1)e_1+[v+U(e_1,e_2)]e_2-\<QV,e_1\>\n,\\
\nabla_{e_2}y=[-v+U(e_1,e_2)]e_1+U(e_2,e_2)e_2-\<QV,e_2\>\n,\end{array}\right.\qfq x\in S.\label{y6.6}\ee
 Moreover, from \cite[Theorem 2.1]{Yao2017},   $(V,v)$ is a solution to problem (\ref{6.1}) if and only if $v$ solves problem
 \be\<D^2v,Q^*\Pi\>+\frac1\kappa X_0v+v\kappa\tr_g\Pi=\kappa f+\frac1\kappa\<X_0,F\>+\<DF,Q^*\Pi\>\qfq x\in S,\quad\kappa\not=0\label{6.6}\ee where $f$ and $F$ are given in (\ref{6.2}).\\

Let $U\in\WW^{m+1,2}(S,T^2_{\sym})$ be given.  We will find a solution
$$(V,v)\in\WW^{m-1,2}(S,T)\times\WW^{m,2}(S)$$ to problem (\ref{6.1}) as follows.

Consider the region $\Om_{-\varepsilon}$ in Theorem \ref{t5.1}. By (\ref{6.2})
$$(F,f,0)\in\W^{m-1}(\Om_{-\varepsilon},T)\times\WW^{m-1,2}(\Om_{-\varepsilon})\times\WW^{m-1,2}(\Ga_{-\varepsilon})$$ By Theorem \ref{t5.1} there exists  $q\in\WW^{m+9/2,2}(\Ga_{b_1})$ such that problem
\be\left\{\begin{array}{l}Dv=\nabla\n V+F\qfq x\in\Om_{-\varepsilon},\\
\div_gV=\rho v+f\qfq x\in\Om_{-\varepsilon},\\
\<V,QX\>|_{\Ga_{-\varepsilon}}=p,\quad\<V,\nu\>|_{\Ga_{b_1}}=q,\quad
\int_{\Om_{-\varepsilon}}vdg=0,\end{array}\right.\label{6.7}\ee where $\rho=-\tr_g\Pi,$
 admits a solution $(U,u)$ satisfying
$$(U,u,U|_{\Ga_{-\varepsilon}})\in\WW^{m-1,2}(\Om_{-\varepsilon},T)\times\WW^{m,2}(\Om_{-\varepsilon})\times\WW^{m-1,2}(\Ga_{-\varepsilon},T).$$

The above solution can be extended to the region $S_1=S/\Om_{-\varepsilon}$ below. Since $S_1$ is a noncharacteristic region, the boundary operators $\T_1$ and $T_2$ as in \cite{Yao2017} can defined.
 Let $x\in\Ga_{-\varepsilon}$ be given. $\mu\in T_xS_1$ with $|\mu|=1$ is said to be the {\it noncharacteristic normal} outside $S_1$ if there is a curve
$\zeta:$ $(0,\iota)\rw S$ such that
$$\zeta(0)=x,\quad \zeta'(0)=-\mu,\quad\Pi(\mu,Y)=0\qfq Y\in T_x\Ga_{-\varepsilon}.$$ Let $\mu$ be the the noncharacteristic normal field along $\Ga_{-\varepsilon}.$
 We define boundary operators $\T_i:$ $T_xM\rw T_xM$ by
\be \T_iX=\frac{1}{2}\Big[Y+(-1)^i\chi(\mu,Y)\omega(Y)Q\nabla\n Y]\qfq Y\in T_xM,\quad i=1,\,\,2,\label{xn4.14}\ee where
\be\chi(\mu,Y)=\sign\det\Big(\mu,Y,\n\Big),\quad \omega(Y)=\frac{1}{\sqrt{-\kappa}}\sign\Pi(Y,Y),\label{rho4.3}\ee and  $\sign$ is the sign function. By a similar argument for \cite[Theorem 4.2]{Yao2017}
problem
$$\begin{cases}\<D^2v,Q^*\Pi\>+\frac1\kappa X_0v+v\kappa\tr_g\Pi=\kappa f+\frac1\kappa\<X_0,F\>+\<DF,Q^*\Pi\>\qfq x\in S_1,\\
w=u,\quad \frac1{\sqrt{2}}\<Dw,(\T_1-\T_2)\a_t\>=\frac1{\sqrt{2}}\<Du,(\T_1-\T_2)\a_t\>\qfq x\in\Ga_{-\varepsilon}
\end{cases}$$
admits a unique solution $w\in\WW^{m+1,2}(S_1),$ where $u$ is the component of the solution $(U,u)$ to problem (\ref{6.7}). Let
$$W=(\nabla\n)^{-1}(Dw-F)\qfq x\in S_1.$$
We define
$$(V,v)=\begin{cases}(U,u)\qfq x\in\Om_{-\varepsilon},\\
(W,w)\qfq x\in S_1.\end{cases}$$ Then $(V,v)\in\WW^{m-1,2}(S,T)\times\WW^{m,2}(S)$ is a solution to (\ref{6.1}). Then by (\ref{y6.6}) $y\in\WW^{m,2}(S,\R^3).$ Thus by \cite[Lemma 4.3]{Yaobook}
$$\sym DW=U-\<y,\n\>\Pi\in\WW^{m,2}(S,T).$$ The proof is complete.\hfill$\Box$\\

{\bf Proof of Theorem 1.2}\,\,\,Let $y\in\WW^{2,2}(S,\R^3)$ be an  infinitesimal isometry. Let
$$ 2v=\nabla y(e_2,e_1)-\nabla y(e_1,e_2),\quad V=(\nabla\n)^{-1}Dv\qfq x\in S,$$ where $e_1,$ $e_2$ is an orthonormal basis of $T_xS$ with positive orientation. Then $(V,v)$ solves problem
\be\left\{\begin{array}{l}Dv=\nabla\n V\qfq x\in S,\\
\div_gV+v\tr_g\Pi=0\qfq x\in S.\end{array}\right.\label{6.10}\ee By (\ref{y6.6}) with $U=0,$ $y\in\WW^{2,2}(S,\R^3)$ implies $(V,v)\in\WW^{1,2}(S,T)\times \WW^{2,2}(S).$

Consider the region $\Om_{-\varepsilon}$ as in Theorem \ref{t5.2}.
By \cite[Theorem 9.19]{GNT} $v|_{\Ga_{b_1}}\in\CC^{m+4,\a}(\Ga_{b_1})$ due to that $v$ solves an elliptic equation on $S_+.$ In addition, since $v$ solves a hyperbolic equation on $S_-,$
 $v|_{\Ga_{-\varepsilon}}\in\WW^{2,2}(\Ga_{-\varepsilon})$ by \cite{Yao2017}. Thus
 $$V|_{\Ga_{b_1}}\in\CC^{m+1}(\Ga_{b_1}),\quad V|_{\Ga_{-\varepsilon}}\in\WW^{1,2}(\Ga_{-\varepsilon}).$$
Set
$$p=\<V,QX\>|_{\Ga_{-\varepsilon}},\quad q=\<V,\nu\>|_{\Ga_{b_1}}.$$ We decompose as the direct sum
$$(V,v)=(V_1,v_1)+(V_0,v_0)\qiq\LL^2(\Om_{-\varepsilon},T)\times\WW^{1,2}(\Om_{-\varepsilon}),$$ where $(V_0,v_0)\in\N_1$ and $\N_1$ is given by (\ref{5.42x}). Then $$(V_0,v_0)\in\WW^{m+4,2}(\Om_{-\varepsilon},T)\times\WW^{m+5,2}(\Om_{-\varepsilon}),$$ and $(V_1,v_1)\in\LL^2(\Om_{-\varepsilon},T)\times\WW^{1,2}(\Om_{-\varepsilon})$ solves problem (\ref{5.41}).
By Theorem \ref{t5.2}
$$(p,q)\bot\aleph\qiq\LL^2(\Ga_{-\varepsilon})\times\LL^2(\Ga_{b_1}),$$ and
$$\|(V_1,v_1)\|_{\WW^{1,2}(\Om_{-\varepsilon},T)\times\WW^{2,2}(\Om_{-\varepsilon})}\leq C\|(p,q)\|^2_{\WW^{1,2}(\Ga_{-\varepsilon})\times\WW^{1,2}(\Ga_{b_1})},$$ where $\aleph$ is given in (\ref{n5.41x}).

Next, for given $\iota>0$ arbitrarily, we take $(p_\iota,q_\iota)\in\WW^{m+1,2}(\Ga_{-\varepsilon})\times\WW^{m+1,2}(\Ga_{b_1})$ such that
$$\|(p,q)-(p_\iota,q_\iota)\|_{\WW^{1,2}(\Ga_{-\varepsilon})\times\WW^{1,2}(\Ga_{b_1})}\leq\iota.$$
Denote by $(\hat p_\iota,\hat q_\iota)$ by the orthogonal projection of $(p_\iota,q_\iota)$ from $\LL^2(\Ga_{-\varepsilon})\times\LL^2(\Ga_{b_1})$ to $\aleph^\bot$ in $\LL^2(\Ga_{-\varepsilon})\times\LL^2(\Ga_{b_1}).$ Since $\dim\aleph<\infty,$ we have
$$\|(p,q)-(\hat p_\iota,\hat q_\iota)\|_{\WW^{1,2}(\Ga_{-\varepsilon})\times\WW^{1,2}(\Ga_{b_1})}\leq C\iota.$$
By Theorem \ref{t5.2} problem (\ref{5.41}) admits a unique solution $(V_\iota,v_\iota)\in\N_1^\bot$ in $\LL^2(\Om_{-\varepsilon})\times\WW^{1,2}(\Om_{-\varepsilon}),$ and
$$\|(V_\iota,v_\iota)\|_{\WW^{m+1,2}(\Om_{-\varepsilon},T)\times\WW^{m+2,2}(\Om_{-\varepsilon})}\leq C\|(\hat p_\iota,\hat q_\iota)\|_{\WW^{m+1,2}(\Ga_{-\varepsilon})\times\WW^{m+1,2}(\Ga_{b_1})}.$$
Thus
$$(V_\iota,v_\iota)\in\CC^{m-1}_B(\Om_{-\varepsilon}),$$ and
$$\|(V_1,v_1)-(V_\iota,v_\iota)\|_{\WW^{1,2}(\Ga_{-\varepsilon})\times\WW^{1,2}(\Ga_{b_1})}\leq C\iota.$$

Now we extend the domains of $(V_1,v_1),$  $(V_0,v_0),$ and $(V_\iota,v_\iota)$ from $\Om_{-\varepsilon}$ to $S,$ respectively, as in the proof of Theorem \ref{t1.1}, with the same notations such that
$$(V,v)=(V_1,v_1)+(V_0,v_0)\qiq\WW^{1,2}(S,T)\times\WW^{2,2}(S),$$
$$(V_\iota,v_\iota)\in\CC^{m-1}_B(S),$$ $(V_1,v_1),$  $(V_0,v_0),$ and $(V_\iota,v_\iota)$ satisfy problem (\ref{6.10}) on the region $S,$ respectively, and
$$\|(V,v)-(V_\iota+V_0,v_\iota+v_0)\|_{\WW^{1,2}(S,T)\times\WW^{2,2}(S)}\leq C\iota.$$ The proof is complete by the formula (\ref{y6.6}). \hfill$\Box$\\

{\bf Proof of Theorem \ref{t1.3}}\,\,\,As in $\cite{HoLePa}$ we conduct in $2\leq i\leq m.$  Let
$$y_\varepsilon=\sum_{j=0}^{i-1}\varepsilon^jz_j$$ be an $(i-1)$th order isometry of class $\CC^{2+4(m-i+1)}_B(S,\R^3),$ where $z_0=\id$ and $z_2=y$ for some $i\geq2.$ Then
$$\sum_{j=0}^k\na^Tz_j\na z_{k-j}=0\qfq 1\leq k\leq i-1.$$

Next, we shall find out $z_i\in\CC^{2+4(m-i)}_B(S,\R^3)$ such that
$$\phi_\varepsilon=y_\varepsilon+\varepsilon^iz_i$$ is an $i$th order isometry.
By Corollary  $\ref{c1.1}$ there exists  a solution $z_i\in\CC^{2+4(m-i)}_B(S,\R^3)$ to problem
$$\sym \nabla z_i=-\frac{1}{2}\sym\sum_{j=1}^{i-1}\na^Tz_j\na z_{i-j}$$ which satisfies
\beq\|z_i\|_{\CC^{2+4(m-i)}_B(S,\R^3)}&&\leq C\|\sum_{j=1}^{i-1}\sym\na^Tz_j\na z_{i-j}\|_{\CC^{2+4(m-i)+3}_B(S,\R^3)}\nonumber\\
&&\leq C\sum_{j=1}^{i-1}\|z_j\|_{\CC^{2+4(m-i+1)}_B(S,\R^3)}\|z_{i-j}\|_{\CC^{2+4(m-i+1)}_B(S,\R^3)}.\nonumber\eeq
The conduction completes. \hfill$\Box$\\

\appendix{\bf Appendix: A theorem of  Lax-Milgram's type}\\

The following theorem is an improved version of the Lax-Milgram theorem.

{\bf Theorem A}\,\,{\it  Let $(\V,\<\cdot,\cdot\>_\V)$ be a Hilbert space. Suppose that $\W\subset\V$ is a linear subspace of $\V$ such that $(\W,\<\cdot,\cdot\>_\W)$ is an inner product space, which may not be complete. Let $\b:$ $\V\times\W\rw\R$ be a bilinear functional satisfying
\be|\b(v,w)|\leq C\|v\|_\V\|w\|_\W\qfq v\in\V,\,\,w\in\W,\label{6.11}\ee
\be\b(w,w)\geq\si\|w\|^2_\V\qfq w\in\W.\ee Then there is a closed linear subspace $\V'\subset\V$ such that for  given bounded linear functional $\F$ on $\V,$ there  exists a unique $v\in\V'$ satisfying
$$\F(w)=\b(v,w)\qfq w\in\W,$$
\be\|v\|\leq\frac1\si\|\F\|_{\V^*}.\label{6.14}\ee}

\begin{proof} For given $w\in\W,$ from (\ref{6.11}) $\b(\cdot,w)$ is a bounded linear functional on $\V.$ The Riesz representation theorem implies that there exists a bounded linear operator $A:$ $\W\rw\V$ such that
$$\b(v,w)=\<v,A w\>_\V\qfq v\in\V,\,\,w\in\W.$$
Then $\|A\|\leq C.$ Thus
$$\<w,A w\>_\V=\b(w,w)\geq\si\|w\|^2_\V\qfq w\in\W,$$ which yield
\be\|A w\|_\V\geq\si\|w\|_\V\qfq w\in\W.\label{6.13}\ee

Let $\V'$ be the closure of $R(A)$ in $\V.$ Then for given $y\in\V'$ there is a sequence $\{w_k\}_1^\infty\subset\W$ such that
$$\|y-Aw_k\|_\V\rw0\quad\mbox{as}\quad k\rw\infty.$$ Define
$$\varphi(y)=\lim_{k\rw\infty}\F(w_k).$$ By (\ref{6.13}) $\var$ is a bounded linear functional on $\V'.$ By the Riesz representation theorem there exists a unique $v\in\V'$ such that
$$\var(y)=\<v,y\>_\V\qfq y\in\V'.$$ In particular, letting $y=Aw$ for $w\in\W$ yields
$$\F(w)=\var(y)=\<v,Aw\>_\V=\b(v,w)\qfq w\in\W.$$ Finally, (\ref{6.14}) follows from (\ref{6.13}).
\end{proof}

{\bf Compliance with Ethical Standards}

Conflict of Interest: The author declares that there is no conflict of interest.

Ethical approval: This article does not contain any studies with human participants or animals performed by the authors.

 \end{document}